\numberwithin{equation}{section}
\title[Energy stable schemes for Stochastic Galerkin SWE]{Energy Stable and Structure-Preserving Schemes for the Stochastic Galerkin Shallow Water Equations}
\author{Dihan Dai, Yekaterina Epshteyn and Akil Narayan}
\address[Dihan Dai]%
{San Diego, CA, USA}
\email{dihandai2022@gmail.com}
\address[Yekaterina Epshteyn]%
{Department of Mathematics,
The University of Utah,
Salt Lake City, UT 84112, USA}
\email{epshteyn@math.utah.edu}
\address[Akil Narayan]%
{Department of Mathematics and Scientific Computing and Imaging (SCI) Institute,
The University of Utah,
Salt Lake City, UT 84112, USA}
\email{akil@sci.utah.edu}
\date{October 2023}
\newcommand{\R}{\mathbbm{R}}
\newcommand{\N}{\mathbbm{N}}
\newcommand{\cP}{\mcp}
\newcommand{\hh}{\widehat{h}}
\newcommand{\bbh}{\overline{\boldsymbol{h}}}
\newcommand{\bbu}{\overline{\boldsymbol{u}}}
\newcommand{\bh}{\boldsymbol{h}}
\newcommand{\bu}{\boldsymbol{u}}
\newcommand{\bq}{\boldsymbol{q}}
\newcommand{\bw}{\boldsymbol{w}}
\newcommand{\hU}{\widehat{U}}
\newcommand{\hV}{\widehat{V}}
\newcommand{\bs}[1]{\boldsymbol{#1}}
\newcommand{\bS}[1]{\bs{S}}
\newcommand{\bE}[1]{\bs{E}}
\newcommand{\bV}{\boldsymbol{V}}
\newcommand{\bU}{\boldsymbol{U}}
\newcommand{\bbV}{\overline{\boldsymbol{V}}}
\newcommand{\bR}{\boldsymbol{R}}
\newcommand{\hu}{\widehat{u}}
\newcommand{\hq}{\widehat{q}}
\newcommand{\hqx}{\widehat{q}}
\newcommand{\hB}{\widehat{B}}
\newcommand{\hS}{\widehat{S}}
\newcommand{\hF}{\widehat{F}}
\newcommand{\mcp}{\mathcal{P}}
\newcommand{\ph}{_{i+\frac{i}{2}}}
\newtheorem{lemma}{Lemma}[section]
\newtheorem{proposition}{Proposition}[section]
\newtheorem{theorem}{Theorem}[section]
\newtheorem{remark}{Remark}[section]
\newcommand{\dbracket}[1]{\left\llbracket#1\right\rrbracket}
\newcommand{\dabracket}[1]{\llangle#1\rrangle}
\newtheorem{definition}{Definition}[section]
\renewcommand*\env@matrix[1][\arraystretch]{%
  \edef\arraystretch{#1}%
  \hskip -\arraycolsep
  \let\@ifnextchar\new@ifnextchar
  \array{*\c@MaxMatrixCols c}}
\newsavebox{\@brx}
\newcommand{\llangle}[1][]{\savebox{\@brx}{\(\m@th{#1\langle}\)}%
  \mathopen{\copy\@brx\kern-0.5\wd\@brx\usebox{\@brx}}}
\newcommand{\rrangle}[1][]{\savebox{\@brx}{\(\m@th{#1\rangle}\)}%
  \mathclose{\copy\@brx\kern-0.5\wd\@brx\usebox{\@brx}}}
\newcommand{\ppx}[1]{\frac{\partial}{\partial #1}}
\newcommand{\pfpx}[2]{\frac{\partial #1}{\partial #2}}
\newcommand{\pfnpx}[3]{\frac{\partial^{#2} #1}{\partial #3^{#2}}}
\begin{document}

\maketitle

\begin{abstract}
The shallow water flow model is widely used to describe water flows in rivers, lakes, and coastal areas. Accounting for uncertainty in the corresponding transport-dominated nonlinear PDE models presents theoretical and numerical challenges that motivate the central advances of this paper. Starting with a spatially one-dimensional hyperbolicity-preserving, positivity-preserving stochastic Galerkin formulation of the parametric/uncertain shallow water equations, we derive an entropy-entropy flux pair for the system. We exploit this entropy-entropy flux pair to construct structure-preserving second-order energy conservative, and first- and second-order energy stable finite volume schemes for the stochastic Galerkin shallow water system. The performance of the methods is illustrated on several numerical experiments.\\\\
Keywords: stochastic Galerkin method, finite volume method, structure-preserving
discretization, shallow water equations, hyperbolic systems of
conservation law and balance laws.\\\\
AMS subject classifications: 35L65, 35Q35, 35R60, 65M60, 65M70, 65M08
\end{abstract}
\section{Introduction}
The one-dimensional Saint-Venant system of shallow water equations (SWE) is a popular model of water flows where vertical length scales are much smaller than horizontal ones \cite{barredesaint-venant_theorie_1871}. This system in conservative form is given by,
\begin{align}\label{eq:swesg1-1d}
  U_t + F(U)_x &= S(U), & U = (h, q)^\top \in \R^2,
\end{align}
where $U = U(x,t)$ is the vector of conservative variables; $h(x,t)$ is the water height (a mass-like variable) and $q(x,t)$ is the water discharge (a momentum-like variable). The flux $F$ and source term $S$ are given by,
\begin{align}\label{eq:swe-flux-1d}
  F(U) &= \left(\begin{array}{c} 
    q \\ \frac{{(q)}^2}{h} + \frac{g h^2}{2} 
  \end{array}\right), & 
  S(U) &= \left(\begin{array}{c} 
  0 \\ -g h B' \end{array}\right).
\end{align}
where $B(x)$ is the (assumed known) bottom topography function and $g > 0$ is the gravitational constant. The system \eqref{eq:swesg1-1d} is supplemented with initial and boundary data that we omit for the time being.

The one-dimensional SWE model \eqref{eq:swesg1-1d} is hyperbolic system of partial differential equations (PDE) if $h > 0$, and hence with the non-zero source $S$, then \eqref{eq:swesg1-1d} is a nonlinear hyperbolic balance law. Because of this, it inherits the standard challenges in developing numerical methods for such models: solutions generically develop discontinuities in finite time even with smooth initial data, non-uniqueness of weak solutions should be rectified by an implicit or explicit numerical imposition of entropy conditions, and implicit time-integration solvers are challenging to implement due to the nonlinearity \cite{dafermos2016hyperbolic,leveque_numerical_1992,leveque_finite_2002}. In addition to all this, the SWE has challenges that are somewhat specific to its particular form: positivity of the water height $h$ should be maintained, and numerical schemes should accurately capture near-equilibrium dynamics, which is typically achieved by imposing the \textit{well-balanced} property \cite{bermudez_upwind_1994}, i.e., that the PDE equilibrium states are exactly captured at the discrete level. 

A more nebulous and hence more frustrating challenge is that of \textit{uncertainty} in the model. For example, one may have incomplete, partial information about the initial data or the bottom topography function $B$. In such cases, one frequently models this data as a random variable or process, and hence the solution $U$ to \eqref{eq:swesg1-1d} is random. We consider the somewhat more simple situation when the input uncertainty is encoded with a finite-dimensional random variable, in which case \eqref{eq:swesg1-1d} becomes a parametric model (with the input random variables serving as the parameters). Even with this simplification, the parametric or stochastic nature of the solution exacerbates many of the previously described numerical challenges. A particularly successful approach for handling such problems that we will employ is the polynomial Chaos (PC) method, wherein $U$ is approximated as a polynomial function of the input parameters \cite{xiu2010numerical,smith_uncertainty_2013,sullivan2015introduction}. 

The class of \textit{non-intrusive} PC strategies construct the polynomial by collecting an ensemble of solutions to \eqref{eq:swesg1-1d} at a collection of fixed values of the parameters. This approach is attractive since it can exploit existing and trusted legacy solvers for \eqref{eq:swesg1-1d}, for which there are several effective choices \cite{zhou_surface_2001,rogers_mathematical_2003,crnjaric-zic_balanced_2004,xing_high_2006,CiCP-1-100,kurganov_centralupwind_2002,xing_high_2005,xing_chapter_2017,liu_wellbalanced_2018,bryson_wellbalanced_2011,kurganov_secondorder_2007-2,xing_high_2016,epshteyn_adaptive_2023,kurganov_finite-volume_2018}. However, this approach suffers from the disadvantage that making concrete statements about the quality or properties of the resulting polynomial approximation can be challenging. For example, one cannot guarantee that entropy conditions are satisfied if the polynomial approximation is evaluated away from the parameter ensemble used to construct the approximation.

This paper is concerned with an alternative \textit{intrusive} approach, the stochastic Galerkin (SG) method for PC approximation, which addresses the parametric dependence in a Galerkin fashion, e.g., by enforcing that certain probabilistic moments of \eqref{eq:swesg1-1d} vanish. This approach has the potential to provide pathways to mathematical rigor of numerical methods through weak enforcement of the parametric dependence. SG methods transform a parametric model \eqref{eq:swesg1-1d} into a new non-parametric model of larger system size. Since the new SG formulation is non-parametric, one can apply typical deterministic numerical methods for systems of PDEs to solve the SG problem. Such approaches have shown particular success for modeling parametric dependence in elliptic problems; see, e.g., \cite{cohen_approximation_2015}. However, the notable drawback of SG methods when applied to (nonlinear) hyperbolic PDEs is that the new non-parametric SG system need not be a hyperbolic PDE itself, which changes the essential character of the SG system relative to the original system. Recent work has developed an SG formulation for the SWE in conservative form that involves a special SG treatment for the nonlinear, non-polynomial terms \cite{doi:10.1137/20M1360736}. Such an approach can be used to develop a well-balanced, hyperbolicity-preserving, and positivity-preserving finite volume method to solve the SG SWE system. This approach has also been extended to two-dimensional SWE systems \cite{dai2021hyperbolicity}.

\subsection{Contributions of this article}
We make the following contributions that build on \cite{doi:10.1137/20M1360736}:
\begin{itemize}[itemsep=0pt,topsep=0pt,leftmargin=15pt]
  \item We derive an entropy-entropy flux pair for the spatially one-dimensional hyperbolicity-preserving, positivity-preserving SG SWE system derived in \cite{doi:10.1137/20M1360736}, see \cref{thm:eef-sgswe-1d}. Entropy-entropy flux pairs are the theoretical starting point for proposing entropy admissibility criteria to resolve non-uniqueness of weak solutions. 
  \item Using the entropy-entropy flux pair, we devise second-order energy conservative, and first- and second-order energy stable finite volume schemes for the SG SWE, all of which are also well-balanced. See \cref{thm:ec-scheme,thm:es1,thm:es2}, with the procedure in \cref{alg:swesg}. The designed energy conservative and energy stable schemes are the stochastic extensions of the schemes developed in \cite{fjordholm2011well,fjordholm2012arbitrarily}.
  \item We provide numerical experiments that explore the simulation capabilities of the new schemes. To the best of our knowledge, these are the first schemes for any SG SWE system that boast energy stability, the well-balanced property, while also being positivity- and hyperbolicity-preserving.
\end{itemize}

\noindent An outline of this paper is as follows: \Cref{sec:prelim}
introduces our notation, along with background on PC methods and the
SG SWE system from \cite{doi:10.1137/20M1360736}. \Cref{sec3} provides
our entropy-entropy pair construction for the SG SWE
system. \Cref{sec:schemes} provides the statement of the energy
conservative and energy stable schemes that we develop, along with
proofs of their theoretical properties, as well as their algorithmic details. \Cref{sec:results} compiles
numerical examples that demonstrate the performance of our
scheme. \Cref{sec:conclusion} gives brief summary of the main results
and some future research directions. We summarize our notation in this article in \cref{tab:notation}.

\begin{table}[htbp]
  \begin{center}
  \resizebox{\textwidth}{!}{
    \renewcommand{\tabcolsep}{0.4cm}
    {\scriptsize
    \renewcommand{\arraystretch}{1.3}
    \renewcommand{\tabcolsep}{12pt}
    \begin{tabular}{@{}lp{0.8\textwidth}@{}}
      \toprule
      $K$& number of terms in PCE expansion,\\
      $\hU = (\hh^\top, \widehat{q}^\top)^\top \in \mathbbm{R}^K\times\mathbbm{R}^K$ & PCE state vector and its components for 1D SG SWE,  \\  
      $\bU = (\bs{h}^\top, \bs{q}^\top)^\top \in \mathbbm{R}^K\times\mathbbm{R}^K$ & quantities related to cell averages/reconstructed values, etc. of the PCE state vector and its components for 1D SG SWE,  \\
      $(E,H)$& entropy pair for 1D SG SWE,\\
 %     $\hU = (\hh^\top, \hqx^\top, \hqy^\top)^\top \in \mathbbm{R}^K\times\mathbbm{R}^K\times\mathbbm{R}^K$ & PCE state vector and its components for 2D SG SWE,  \\
  %    $\bU = (\bs{h}^\top, \bs{q^x}^\top, \bs{q^y}^\top)^\top \in \mathbbm{R}^K\times\mathbbm{R}^K\times\mathbbm{R}^K$ & quantities related to cell averages/reconstructed values, etc. of the PCE state vector and its components for 2D SG SWE,  \\                  
 %     $(E,H_1,H_2)$& entropy pair for 2D SG SWE,\\
  %    $(\hu, \hv)$& PCE vector for $x$- and $y$-velocity,\\
   %   $(\bs{u}, \bs{v})$& quantities related to cell
   %   averages/reconstructed values, etc. of the PCE vector for $x$-
   %   and $y$-velocity,\\
         $\hu$& PCE vector for velocity,\\
      $\bs{u}$& quantity related to cell averages/reconstructed values, etc. of the PCE vector for velocity,\\
      $\hV, \bV$& the PCE vector for the entropic variable and quantities related to cell averages/reconstructed values, etc. of the entropic variable,\\
      $\mathcal{P}(\cdot)$& the operator that maps a PCE vector to a PCE triple-product matrix.\\
      $\overline{\bh}_{i+\frac{1}{2}}$& arithmetic average of $\bh_{i}$ and $\bh_{i+1}$. Similar notation is applied to other bold letters, e.g, $\overline{\bs{U}}_{i+\frac{1}{2}}$.\\
      $\dbracket{\bh}_{i+\frac{1}{2}}$& first-order jump defined via cell averages, \\
      $\dabracket{\bh}_{i+\frac{1}{2}}$& (notationally) second-order jump defined via reconstructed values.\\
      $\widehat{F}$& flux,\\
      $\mathcal{F}$& numerical flux,\\
      $\bs{Q}^{EC}, \bs{Q}^{ES1}, \bs{Q}^{ES2}$& energy-conservative flux, 1st-order energy-stable flux, and 2nd-order energy stable flux, respectively,\\
      $\bs{w}^{\pm},\widetilde{\bs{w}}^{\pm}$&scaled variables,\\
    \bottomrule
    \end{tabular}
  }
    }
  \end{center}
\caption{Notation and terminology used throughout this article.}\label{tab:notation}
\end{table}

\section{Preliminaries}\label{sec:prelim}

\subsection{Notation}
We use $\|\cdot\|$ to denote the standard Euclidean ($\ell^2)$ norm operating on vectors. If $f: \R^m \rightarrow \R^n$ for $m, n \in \N$, then we write $f(x)$ for $x = (x_1, \ldots, x_m)$, and $f(x) = (f_1, \ldots, f_n)$. We use the following notation for the gradient:
\begin{align*}
  f_x \coloneqq \pfpx{f}{x} = \left(\begin{array}{cccc} 
    \pfpx{f_1}{x_1} & \pfpx{f_1}{x_2} &\cdots & \pfpx{f_1}{x_m} \\
    \pfpx{f_2}{x_1} & \pfpx{f_2}{x_2} &\cdots & \pfpx{f_2}{x_m} \\
    \vdots & \vdots & & \vdots \\
    \pfpx{f_n}{x_1} & \pfpx{f_n}{x_2} &\cdots & \pfpx{f_n}{x_m}
  \end{array}\right) \in \R^{n \times m}.
\end{align*}
When $n = 1$ (i.e., $f$ is scalar-valued) then $\pfnpx{f}{2}{x}$ is the $n \times n$ Hessian of $f$. If $\bs{A}$ is a square matrix, then we write $\bs{A} > 0$ and $\bs{A} \geq 0$ when $\bs{A}$ is positive definite and positive semi-definite, respectively.

In work on the SWE system \eqref{eq:swesg1-1d} it is common to introduce the water velocity (equilibrium) variable
\begin{align}\label{eq:u-SWE}
  u &\coloneqq \frac{q}{h},
\end{align}
and we also make use of this variable in what follows.

\subsection{Polynomial Chaos Expansion}
In this section, we briefly review the results and notation for polynomial chaos expansion. More comprehensive results can be found in \cite{debusschere2004numerical,xiu2010numerical,sullivan2015introduction}, etc.

%Let $\bx:= x\in \mathbbm{R}^1$ or $\bx:= (x, y)\in \mathbbm{R}^2$ be a space variable in the physical space and 
Let $\xi\in\mathbbm{R}^d$ be a random variable associated with Lebesgue density function $\rho$. Define the function space
\begin{align*}
    L^2_\rho(\R^d) \coloneqq \left\{f: \R^d\to\R\middle\vert\left(\int_{\R^d}f^2(s)\rho(s)ds\right)^{\frac{1}{2}}<+\infty\right\}.   
\end{align*}
Assuming finite polynomial moments of all orders for $\rho$, there exists an orthonormal basis $\{\phi_k\}_{k = 1}^{\infty}$ of $L^2_\rho$, i.e.,
\begin{align}\label{eq:orthocond}
  \langle\phi_k,\phi_\ell\rangle_{\rho} &\coloneqq \int_{\R^d} \phi_k(s) \phi_\ell(s) {\rho}(s)d s  = \delta_{k, \ell}, & \phi_1(\xi) &\equiv 1,
\end{align}
for all $k,\ell\in\N$, where $\delta_{k,\ell}$ is the Kronecker delta. PCE seeks a representation of a random field $z(\cdot, \cdot, \xi)\in L^2_\rho$ in terms of a series of orthonormal polynomials for $\xi$,
\begin{equation}\label{eq:PCE}
    z(x, t, \xi) \stackrel{L^2_\rho}{=} \sum_{k = 1}^{\infty} \widehat{z}_i(x,t)\phi_i(\xi),
\end{equation}
where $x,t$ are the deterministic spatial and temporal variables, and $\widehat{z}_i(x,t)$ are deterministic Fourier-like coefficients. The equation \eqref{eq:PCE} holds true for all $z(x,t;\cdot)\in L^2_\rho$ under mild conditions \cite{ernst_convergence_2012}. In practice, a finite truncation of \eqref{eq:PCE} is usually considered. Let $P$ be a $K$-dimensional polynomial subspace of $L^2_\rho$, 
\begin{align}\label{eq:P-def}
  P = \mathrm{span} \left\{ \phi_k, \;\; k=1, \ldots, K \right\},
\end{align}
i.e., we let $\phi_k$ be an orthonormal basis for $P$. We make the common assumption that $1 \in P$, and for convenience we assume that,
\begin{align*}
  \phi_1(\xi) \equiv 1.
\end{align*}
A popular choice for $P$ is the total degree space, but several other options are possible.

%Let $\bs{\zeta} = (\zeta_1,\ldots,\zeta_d)\in \R^d$ be a variable, and $\bs{\nu} = (\nu_1,\ldots,\nu_d)\in\N_0^d$ be a multi-index. The monomials associated with $\nu$ is denoted by
%\begin{align*}
%  \zeta^\nu &\coloneqq \prod_{j=1}^d \zeta_j^{\nu_j}, & \zeta^0 = \zeta^{(0,0,\ldots,0)} &= 1.
%\end{align*}
%Define the polynomial space:
%\begin{align*}
%  P_\Lambda &= \mathrm{span} \{ \bs{\zeta}^{\bs{\nu}}\;\; \big|\;\; \nu \in \Lambda\}, & \dim P_\Lambda &= K \coloneqq |\Lambda|,
%\end{align*}
%where $\Lambda\subseteq\mathbb{N}^d_0$ is any non-empty, size-$K$ finite set of multi-indices with $(0,\ldots,0)\in \Lambda$. 

%If $P_\Lambda = \mathrm{span}\{\phi_k\;\;\big|\;\; 1 \leq k \leq K\}$, then 
The $K$-term PCE \textit{approximation} of a random field $z$ onto $P$ is defined by the projection of \eqref{eq:PCE} onto $P$:
\begin{align}\label{eq:PCEex} 
    \Pi_P[z](x,t,\xi)\coloneqq\sum_{k=1}^K \widehat{z}_k(x,t)\phi_k(\xi).
\end{align}
Using the orthogonality of the basis function, the statistics of $\Pi_P[z]$ can be expressed in terms of the expansion coefficients. For example, the mean and the variance of $\Pi_P[z]$ are given by:
\begin{equation}\label{eq:expvar}
  \mathbb{E}[\Pi_P[z](x,t,\xi)] = \widehat{z}_1(x,t),\quad \text{Var}[\Pi_P[z](x,t,\xi)] = \sum_{k=2}^{K}\widehat{z}_k^2(x,t).
\end{equation}
Let $\widehat{z} = \left(\widehat{z}_1,\cdots,\widehat{z}_k\right)\in \mathbbm{R}^K$ be the vector of the expansion coefficients in \eqref{eq:expvar}. Define the linear operator $\mathcal{P}: \R^{K} \rightarrow \R^{K\times K}$ as
\begin{align}\label{eq:pmatrix}
  \mathcal{P}(\widehat{z}) &\coloneqq \sum_{k=1}^K\widehat{z}_k\mathcal{M}_k, &
  \mathcal{M}_k &\in \R^{K \times K}, &
  (\mathcal{M}_k)_{\ell, m} &= \langle\phi_k,\phi_\ell\phi_m\rangle_{\rho}.
\end{align}
Fixing $\widehat{z} \in \R^K$, then $\mathcal{P}(\widehat{z})$ is the (symmetric) quadratic form matrix representation of the bilinear operator $(\widehat{a}, \widehat{b}) \mapsto \left\langle a_P\, z_P, b_P \right\rangle_\rho$, where $z_P \coloneqq \sum_{k=1}^K \widehat{z}_k \phi_k(\xi)$ and similarly for $a_P, b_P$ with $\widehat{a}, \widehat{b} \in \R^K$. Using the fact that $(\mathcal{M}_k)_{\ell,m}$ is commutative in $(k,m)$ a direct computation shows:
%It can be verified by direct computation that
\begin{align}\label{eq:pmatrixproperty}
  \mathcal{P}(\widehat{a}) &= \begin{pmatrix}\mathcal{M}_1\widehat{a}, \; \mathcal{M}_2\widehat{a}, \; \ldots, \; \mathcal{M}_K\widehat{a} \end{pmatrix}, %& 
%  \mathcal{P}(\widehat{a})\widehat{b} &= \mathcal{P}(\widehat{b})\widehat{a}.
\end{align}
A useful lemma is given as follows.
\begin{lemma}
For any two vectors $\widehat{a},\widehat{b}\in \mathbb{R}^K$, 
\begin{align}\label{eq:commute}
  \mcp(\widehat{a})\widehat{b} &=  \mcp(\widehat{b})\widehat{a}, & \widehat{b}^\top\mcp(\widehat{a}) &=  \widehat{a}^\top\mcp(\widehat{b}).
\end{align}
\end{lemma}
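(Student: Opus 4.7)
The plan is to trace everything back to the triple-product tensor $T_{k,\ell,m} \coloneqq \langle \phi_k \phi_\ell \phi_m \rangle_\rho$, which is fully symmetric in $(k,\ell,m)$ because ordinary multiplication of scalar functions is commutative. By the definitions \eqref{eq:pmatrix}, the entry of $\mathcal{P}(\widehat{a})$ in row $\ell$ and column $m$ is exactly $\sum_k \widehat{a}_k T_{k,\ell,m}$, so matrix-vector multiplication unfolds into a triple sum against $T_{k,\ell,m}$.

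For the first identity, I would compute componentwise:
\begin{align*}
\bigl[\mathcal{P}(\widehat{a})\widehat{b}\bigr]_\ell = \sum_{k,m} \widehat{a}_k\, T_{k,\ell,m}\, \widehat{b}_m.
\end{align*}
Swapping the dummy indices $k \leftrightarrow m$ and using $T_{k,\ell,m} = T_{m,\ell,k}$ (full symmetry of the tensor) converts this expression into $\sum_{k,m} \widehat{b}_k\, T_{k,\ell,m}\, \widehat{a}_m = [\mathcal{P}(\widehat{b})\widehat{a}]_\ell$, which is the desired equality. The second identity then follows essentially for free from the first by transposing, since each $\mathcal{M}_k$ is symmetric in $(\ell,m)$ (again by $T_{k,\ell,m} = T_{k,m,\ell}$) and hence so is $\mathcal{P}(\widehat{a})$: we get $\widehat{b}^\top \mathcal{P}(\widehat{a}) = (\mathcal{P}(\widehat{a})\widehat{b})^\top = (\mathcal{P}(\widehat{b})\widehat{a})^\top = \widehat{a}^\top \mathcal{P}(\widehat{b})$.

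There is no real obstacle here: the statement is a purely algebraic consequence of the symmetry of the Galerkin triple products. If a more conceptual proof is preferred, one can instead observe that $\widehat{c}^\top \mathcal{P}(\widehat{a})\widehat{b}$ equals the integral $\langle a_P b_P c_P \rangle_\rho$ with $a_P, b_P, c_P$ the polynomials in $P$ whose coefficient vectors are $\widehat{a}, \widehat{b}, \widehat{c}$ (as noted right before \eqref{eq:pmatrixproperty}), and this trilinear form is symmetric in its three arguments; picking off the roles of $\widehat{a}, \widehat{b}, \widehat{c}$ yields both identities. Either route is short, so I would present the index-based argument for directness.
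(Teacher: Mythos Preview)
Your proof is correct and is essentially the same as the paper's: the paper merely remarks that the identity is immediate from \eqref{eq:pmatrix}, \eqref{eq:pmatrixproperty}, and the symmetry of $\mathcal{P}(\cdot)$, which is exactly your index computation repackaged (via \eqref{eq:pmatrixproperty} one has $\mathcal{P}(\widehat{a})\widehat{b} = \sum_{m} \widehat{b}_m \mathcal{M}_m \widehat{a} = \mathcal{P}(\widehat{b})\widehat{a}$). The second identity follows by transposition just as you wrote.
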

The proof is straightforward using \eqref{eq:pmatrix} and
\eqref{eq:pmatrixproperty} along with the symmetry of
$\mathcal{P}(\cdot)$. This result is a ``commutative'' property of the
operator $\mcp(\cdot)$. %that will prove very useful in what
For example: For any $a, b, c \in \R^K$,
\begin{align}\label{eq:acb}
  \ppx{c} a^\top \mcp(c) b^\top = a^\top \mcp(b),
\end{align}
%The second part in \eqref{eq:commute} uses the symmetry of $\mcp(\cdot)$.

A \textit{stochastic Galerkin} (SG) formulation of a $\xi$-parameterized PDE corresponds to making the ansatz that the state variable lies in the space $P$, and projecting the PDE residual onto the same space. Straightforward applications of this procedure to (nonlinear) hyperbolic PDEs typically do not result in hyperbolic SG formulations.

\subsection{Hyperbolic-Preserving Stochastic Galerkin Formulation for Shallow Water Equation}
In \cite{doi:10.1137/20M1360736}, we have derived a hyperbolicity-preserving stochastic Galerkin formulation for the shallow water equations. We briefly recall the results in this section. %giving details in the one-dimensional case and summarizing results for the two-dimensional case.
%\subsubsection{One-dimensional case}
%We describe a stochastic Galerkin (SG) formulation of the one-dimensional SWE model \eqref{eq:swesg1-1d} that was derived in \cite{doi:10.1137/20M1360736}. 
We make the ansatz that the solutions to $h, q$ lie in the polynomial space $P$,
\begin{subequations}\label{eq:sg-ansatz1d}
\begin{align}
  h \simeq h_P &\coloneqq \sum_{k=1}^K \hh_k(x,t) \phi_k(\xi), \\
  q \simeq q_P &\coloneqq \sum_{k=1}^K (\hqx)_k(x,t) \phi_k(\xi), 
\end{align}
\end{subequations}
and use these to formulate a $\xi$-variable Galerkin projection of the SWE. We make a special choice of how the Galerkin projection of the nonlinear, non-polynomial term $(q)^2/h$ is truncated, which results in a \textit{new} (stochastic Galerkin) system of balance laws whose state variables are the expansion coefficients in \eqref{eq:sg-ansatz1d} \cite{doi:10.1137/20M1360736}:
%. This results in a \textit{new} (stochastic Galerkin) system of balance laws whose state variables are the the expansion coefficients in \eqref{eq:sg-ansatz1d},
\begin{equation}\label{eq:swesg41d}
   \hU_t+(\widehat{F}(\hU))_x = \widehat{S}(\hU),
\end{equation}
Here, $\hU \coloneqq (\hh^\top, \hqx^\top)^\top \in \R^{2 K}$, where $\hh, \hqx$ are each length-$K$ vectors of the expansion coefficients in \eqref{eq:sg-ansatz1d}. The flux and the source terms are, 
\begin{align}\label{eq:sgfluxessource1d}
    &\widehat{F}(\hU) = \begin{pmatrix}
     \hqx\\\cP(\hqx)\cP^{-1}(\hh)\hqx+\frac{1}{2}g\cP(\hh)\hh
   \end{pmatrix},&&
&\widehat{S}(\hU) = \begin{pmatrix}0\\-g\cP(\hh)\widehat{B_x}\end{pmatrix},
\end{align}
cf. \eqref{eq:swe-flux-1d}. The flux Jacobian, written in $K\times K$ blocks, is given by 
 \begin{align}\label{eq:x-jacobian1d}
   \frac{\partial \widehat{F}}{\partial \hU}
   &= \begin{pmatrix}O&I\\g\cP(\hh)-\cP(\hqx)\cP^{-1}(\hh)\cP(\hu)&\cP(\hqx)\cP^{-1}(\hh)+\cP(\hu)\end{pmatrix}. 
 \end{align}
We have introduced the term
\begin{align}\label{eq:uPCE1d}
  \hu &= \cP^{-1}(\hh)\hqx,
\end{align} 
which we view as the vector of the PCE coefficients of the $x$-velocity $u$ introduced in \eqref{eq:u-SWE}, and is well-defined if $\mcp(\hh)$ is invertible.

%In our previous work \cite{doi:10.1137/20M1360736}, we derived the following \textit{sufficient condition} to guarantee \eqref{eq:swesg41d} to be hyperbolic.
The deterministic SWE are hyperbolic if the water height $h > 0$; there is a natural extension of this property to the SGSWE.
\begin{theorem}[Theorem 3.1, \cite{doi:10.1137/20M1360736}]\label{thm:hyperbolicity}
If the matrix $\mathcal{P}(\hh)$ is strictly positive definite for every point $(x,t)$ in the computational spatial-temporal domain, then the SG formulation \eqref{eq:swesg41d} is hyperbolic.
\end{theorem}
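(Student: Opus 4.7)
The plan is to show the flux Jacobian $\partial\widehat{F}/\partial\hU$ in \eqref{eq:x-jacobian1d} is similar to a real symmetric matrix, which immediately furnishes $2K$ real eigenvalues together with a complete eigenbasis, i.e., hyperbolicity. Introduce the shorthand $A \coloneqq \mcp(\hh)$, $P \coloneqq \mcp(\hqx)\mcp^{-1}(\hh)$, and $Q \coloneqq \mcp(\hu)$, so that the Jacobian becomes the block-companion matrix $J = \begin{pmatrix}0 & I \\ gA - PQ & P + Q\end{pmatrix}$. Because $(\mathcal{M}_k)_{\ell m} = \langle\phi_k,\phi_\ell\phi_m\rangle_\rho$ is symmetric in its two subscripts, every $\mathcal{M}_k$ is symmetric, and hence both $A$ and $Q$ are symmetric; by hypothesis $A$ is strictly positive definite, so the principal square root $A^{1/2}$ (and thus $A^{-1/2}$) is well-defined.

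First I would apply the block-triangular similarity $L = \begin{pmatrix}I & 0 \\ -Q & I\end{pmatrix}$, which is motivated by the pencil identity $\lambda^2 I - \lambda(P+Q) - (gA - PQ) = (\lambda I - P)(\lambda I - Q) - gA$. A direct block multiplication gives $L J L^{-1} = \tilde J$ with $\tilde J = \begin{pmatrix}Q & I \\ gA & P\end{pmatrix}$. I would then apply the diagonal similarity $T = \operatorname{diag}(I, (gA)^{-1/2})$ to scale the companion form symmetrically, yielding $T\tilde J T^{-1} = \begin{pmatrix}Q & (gA)^{1/2} \\ (gA)^{1/2} & A^{-1/2}\mcp(\hqx)A^{-1/2}\end{pmatrix}$. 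The two off-diagonal blocks coincide and are symmetric, the $(1,1)$ block $Q$ is symmetric, and the $(2,2)$ block $A^{-1/2}\mcp(\hqx)A^{-1/2}$ is symmetric since $\mcp(\hqx)$ and $A^{-1/2}$ are; hence the entire matrix is real symmetric.

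Since $J$ is similar to a real symmetric matrix, it has real spectrum and is $\R$-diagonalizable, which is precisely the hyperbolicity of the $2K \times 2K$ system \eqref{eq:swesg41d}. The main conceptual obstacle along this route is spotting the factorization of the quadratic matrix pencil, since $P$, $Q$, and $A$ do not mutually commute in general; once the linearizer $L$ is in hand, the scaling $T$ is forced by the requirement to equalize and symmetrize the off-diagonal blocks, and it is at exactly this step that the positive-definiteness of $\mcp(\hh)$ is used in an essential way.
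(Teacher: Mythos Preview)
Your argument is correct: the two similarities $L$ and $T$ you chose do exactly what you claim, and the resulting $2K\times 2K$ matrix is indeed real symmetric, so hyperbolicity follows. The paper (via the cited reference) follows the same high-level strategy---show $\partial\widehat F/\partial\hU$ is similar to a symmetric matrix---but uses a single symmetrizer
\[
\bR = \frac{1}{\sqrt{2g}}\begin{pmatrix} I & I \\ \mcp(\hu)+\sqrt{g\mcp(\hh)} & \mcp(\hu)-\sqrt{g\mcp(\hh)}\end{pmatrix},
\]
which is the direct stochastic analogue of the eigenvector matrix for the deterministic SWE Jacobian. Your two-step factorization $TL$ and the paper's $\bR^{-1}$ are in fact related by $(TL)\bR = \tfrac{1}{\sqrt g}\cdot\tfrac{1}{\sqrt 2}\begin{pmatrix}I & I\\ I & -I\end{pmatrix}$, so the two symmetric targets differ only by an orthogonal conjugation. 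What your route buys is a cleaner algebraic derivation that does not require guessing the eigenvector ansatz; what the paper's route buys is an explicit $\bR$ that is later reused verbatim to build the Roe-type diffusion operators (see \eqref{eq:scaledeigenmatrix} and \eqref{eq:u-v-decomp}), where the specific eigenvector-like structure of $\bR$ is essential.
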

This is proven by identifying a stochastic extension of the known eigenvector matrix for the deterministic SWE flux Jacobian $\frac{\partial F}{\partial U}$, and using this to show that $\pfpx{\widehat{F}}{\hU}$ is similar to a symmetric matrix and hence \eqref{eq:swesg41d} is hyperbolic \cite{doi:10.1137/20M1360736}.
%, we first find a stochastic variant of the eigenmatrix that diagnolizes the Jacobian of the deterministic shallow water equations. Then we show that the Jacobian $\frac{\partial \widehat{F}}{\partial \hU}$ is similar to a symmetric matrix and thus is hyperbolic.

%In \cite{dai2021hyperbolicity}, we first find a stochastic variant of the eigenmatrix that diagnolizes the Jacobians of the deterministic shallow water equations. Then we show that any linear combinations of the Jacobians $\frac{\partial \widehat{F}}{\partial \hU}$ and $\frac{\partial \widehat{G}}{\partial \hU}$ is similar to a symmetric matrix and thus is hyperbolic.

\section{An Entropy-Entropy Flux Pair for SGSWE systems}\label{sec3}
The formulation \eqref{eq:swesg41d} 
%and \eqref{eq:swesg4} (one- and two-dimensional, respectively), 
will be considered in what follows. Our goal will be to derive entropy-entropy flux pairs for these formulations. %We will show the derivation for the one-dimensional case in detail, and place the details of the two-dimensional case in \Cref{app:proof-eef-sgswe-2d}. 
The first step is for us to recall a known entropy-entropy flux pair for the \textit{deterministic} SWE system. 

%In this section, we first recall an entropy-entropy flux pair for the deterministic shallow water equations. Then we will use it to motivate the derivation of an entropy-entropy flux pair for system \eqref{eq:swesg41d}.
\subsection{Entropy-Entropy Flux Pairs for Deterministic Shallow Water Equations}
It is well-known that solutions to systems of conservation/balance laws can develop shock discontinuities in finite time for generic initial data. Therefore, weak solutions, i.e., solutions in the sense of distributions, are usually considered. However, weak solutions are not necessarily unique, and to mitigate this issue, an additional \textit{entropy admissibility criteria} is imposed \cite{dafermos2016hyperbolic, benzoni2007multi} to identify the physically meaningful solution. 

%\subsubsection{The one-dimensional case}
For a general balance law in one space dimension
\begin{align}\label{eq:1d-balance}
U_t+F(U)_x = S(U),
\end{align}
its entropy-entropy flux pair $(E(U), H(U))$ satisfies a \textit{companion} balance law
\begin{align}\label{eq:econd-11d}
  E(U)_t+H(U)_x = 0%K(U,S),
\end{align}
where the \textit{entropy} $E(U)$ is a scalar function that is
convex in $U$, and $H$ is an \textit{entropy flux} function. 
%To be
%consistent with the original balance law, one defines the right-hand
%side scalar function $K(U,S)$. 
In order to be consistent with the original balance law for smooth $U$, the entropy-entropy flux pair $(E, H)$ should satisfy the following \textit{compatibility condition},
\begin{align}\label{eq:compatibility1d}
  \pfpx{E}{U} (F_x - S) = H_x,
  %\nabla_U H &= (\nabla_U E)^T \nabla_U f, & (\nabla_U E)^T S &= J_x , %\partial_B(H(U,B))B_x.
  %\nabla_U H &= (\nabla_U E)^T \nabla_U f, & (\nabla_U E)^T S &= J_x , %\partial_B(H(U,B))B_x.
\end{align}    
which is simply the condition ensuring that multiplying \eqref{eq:1d-balance} by $\pfpx{E}{U}$ recovers \eqref{eq:econd-11d} when solutions are smooth.
In the case of $S\equiv 0$ and $(E,H)=(E(U), H(U))$, equation \eqref{eq:compatibility1d} is the usual entropy condition for conservation laws. For a general system of balance laws in several spatial dimensions, an entropy-entropy flux pair need not exist. However, for a hyperbolic system of balance laws emerging from continuum physics, the companion balance law \eqref{eq:econd-11d} is usually related to the Second Law of thermodynamics, and the total energy of the system often serves as the entropy function. A variety of examples can be found in \cite[Section 3.3]{dafermos2016hyperbolic}. For the \textit{deterministic} SWE system in \eqref{eq:swesg1-1d}, the total energy \cite{fjordholm2011well} is 
\begin{align}\label{eq:d-efun1d}
E^d(U) = \underbrace{\frac{1}{2}qu}_{\text{kinetic energy}}+\underbrace{\frac{1}{2}gh^2+ghB}_{\text{potential energy}}.
\end{align}
where we recall that $u$ is the velocity defined in \eqref{eq:u-SWE}. For any smooth solution $U$, a direct calculation yields,
\begin{align}\label{eq:sweecond-eq1d}
  E^d(U)_t + H^d(U)_x = 0,
\end{align}
where 
\begin{align}\label{eq:d-eflux1d}
  H^d(U) &= \frac{1}{2}qu^2+gq h + g q B.%, & 
  %J^d(U) &= g q^x B
\end{align}
This, along with the fact that $E^d$ is convex in $U$, establishes that $(E^d, H^d)$ is a valid entropy-entropy flux pair for \eqref{eq:swesg1-1d}. For (weak) solutions with shocks, the entropy admissibility criteria is that energy should dissipate in accordance with a vanishing viscosity principle,
\begin{align}\label{eq:sweecond-dissipative1d}
  E^d(U)_t +  H^d(U)_x \le 0.    
\end{align}

In what follows we will identify entropy-entropy flux pairs for the SGSWE model. This amounts to verifying that (i) such a pair satisfies the companion balance law (an equality for smooth solutions) and (ii) that the entropy function is convex in the state variable.

\subsection{An Entropy-Entropy Flux Pair for the one-dimensional SGSWE}
This section is dedicated to identifying an entropy-entropy flux pair for the SG system \eqref{eq:swesg41d}. 
In this section, we will return to the notation $\hU$ (containing PC expansion coefficients) for the derivation of an entropy-entropy flux pair for the SG system. Our main result in this section is the following entropy entropy-flux pair for the one-dimensional SGSWE:
\begin{theorem}\label{thm:eef-sgswe-1d}
  Define the function,
  \begin{subequations}\label{eq:sg-epair1d}
  \begin{align}\label{eq:sg-efun1d}
    E(\hU) = \frac{1}{2}\left((\hqx)^\top\hu+g\Vert\hh\Vert^2\right)+g\hh^\top\hB,
  \end{align}
  and also the flux function,
  \begin{align}\label{eq:sg-eflux1d}
    H(\hU) &= \frac{1}{2} \hu^\top \mcp(\hqx) \hu + g \hqx^\top \hh + g \hqx^\top \hB, 
  \end{align}
  \end{subequations}
  If $\mcp(\hh) > 0$, then $(E, H)$ is an entropy-entropy flux pair for the one-dimensional SGSWE \eqref{eq:swesg41d}.
\end{theorem}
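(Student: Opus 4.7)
Two things must be verified for $(E,H)$ to be an entropy–entropy flux pair: convexity of $E$ in $\hU=(\hh^\top,\hqx^\top)^\top$ on the set $\{\mcp(\hh)>0\}$, and the compatibility condition $\hV^\top(\widehat{F}_x-\widehat{S})=H_x$ for smooth $\hU$, where $\hV\coloneqq\pfpx{E}{\hU}$. The plan is to treat these in turn, working throughout with the equivalent form $E=\tfrac{1}{2}(\hqx)^\top\mcp^{-1}(\hh)\hqx+\tfrac{g}{2}\Vert\hh\Vert^2+g\hh^\top\hB$, which isolates the $\hU$-nonlinearity into a single matrix-fractional term.

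For convexity, I would compute $\mathrm{Hess}\,E$ block by block in $(\hh,\hqx)$ using three ingredients: the formula $\pfpx{\mcp^{-1}(\hh)}{\hh_k}=-\mcp^{-1}(\hh)\mathcal{M}_k\mcp^{-1}(\hh)$ coming from $\mcp(\hh)=\sum_k\hh_k\mathcal{M}_k$; the identity $\mathcal{M}_k\hu=\mcp(\hu)e_k$ (equivalently $\hu^\top\mathcal{M}_k\hu=(\mcp(\hu)\hu)_k$), a consequence of the symmetry of the triple-product coefficients; and the commutativity relations in \eqref{eq:commute}. These produce the entropy variables $\hV=\bigl(-\tfrac{1}{2}\mcp(\hu)\hu+g\hh+g\hB,\ \hu\bigr)^\top$ and, after collecting terms, the Hessian
\[
\mathrm{Hess}_{\hU}\,E=\begin{pmatrix} gI+\mcp(\hu)\mcp^{-1}(\hh)\mcp(\hu) & -\mcp(\hu)\mcp^{-1}(\hh) \\ -\mcp^{-1}(\hh)\mcp(\hu) & \mcp^{-1}(\hh)\end{pmatrix}.
\]
Since $\mcp^{-1}(\hh)>0$, I would conclude positive definiteness by a Schur complement with respect to the lower-right block; a short direct computation shows that the off-diagonal contribution collapses this Schur complement to exactly $gI>0$.

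For the compatibility condition, I would split the $x$-dependence of $H$ and $\widehat{S}$ into an explicit part coming through $\hB$ and an implicit part coming through $\hU$. The source contribution simplifies to $\hV^\top\widehat{S}=-g\hu^\top\mcp(\hh)\widehat{B_x}=-g\hqx^\top\hB_x$, which exactly matches the $g\hqx^\top\hB_x$ produced when the $g\hqx^\top\hB$ summand of $H$ is differentiated in $x$. For the remaining terms I would expand $H_x$ and $\hV^\top\widehat{F}_x$ by the product rule on the flux components $\hqx$ and $\mcp(\hqx)\hu+\tfrac{g}{2}\mcp(\hh)\hh$, and then repeatedly invoke $\mcp(a)b=\mcp(b)a$ together with $(\mcp(\hh)\hh)_x=2\mcp(\hh)\hh_x$ and the symmetry of each $\mcp(\cdot)$ to reduce both sides to the same sum of six scalar terms in $\hu,\hqx,\hh,\hB$ and their $x$-derivatives.

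I expect the main obstacle to be algebraic bookkeeping for the operator $\mcp(\cdot)$: cubic expressions such as $\mcp(v)\mcp^{-1}(\hh)\mcp(v)\hu$ must be rearranged as genuine quadratic forms in $v$ before the Schur complement collapse is visible, and each step of the compatibility check requires swapping arguments in triple products without descending to the index level. The commutativity identity \eqref{eq:commute} and the symmetry of each $\mathcal{M}_k$ are the workhorses that keep these manipulations purely linear-algebraic.
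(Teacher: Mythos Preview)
Your proposal is correct and follows essentially the same path as the paper: the paper also writes $E$ in the matrix-fractional form, computes the identical Hessian via the same derivative formula for $\mcp^{-1}(\hh)$ and the commutativity identity, and splits off the $\hB$-dependent pieces of $E$ and $H$ to handle the source term separately. The only cosmetic differences are that the paper verifies positive definiteness of the Hessian by evaluating the quadratic form directly (yielding $g\|w_1\|^2+(\mcp(\hu)w_1-w_2)^\top\mcp^{-1}(\hh)(\mcp(\hu)w_1-w_2)$) rather than by Schur complement, and it organizes the compatibility check as the pointwise gradient identity $\pfpx{E_1}{\hU}\,\pfpx{\hF}{\hU}=\pfpx{H_1}{\hU}$ rather than expanding both sides in $x$.
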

Recall that $\hu$ above is defined in \eqref{eq:uPCE1d}, and contains PC expansion coefficients for the velocity $u$ defined in \eqref{eq:u-SWE}.
%Motivated by the deterministic total energy \eqref{eq:d-efun1d}, we propose the following function,
%\begin{equation}\label{eq:sg-efun}
%    \begin{aligned}
%        E(\hU) = \frac{1}{2}\left((\hqx)^\top\hu+g\Vert\hh\Vert^2\right)+g\hh^\top\hB   ,
%    \end{aligned}
%\end{equation}
%as an entropy function for the SG system \eqref{eq:swesg41d}-\eqref{eq:sgfluxessource1d}. ()
In the absence of uncertainty, \eqref{eq:sg-efun1d} reduces to the
deterministic total energy \eqref{eq:d-efun1d}. 
The rest of this section is devoted to proving \cref{thm:eef-sgswe-1d}, which amounts to showing that, if $\mcp(\hh) > 0$, then $E$ is convex in $\hU$ and $(E, H)$ satisfy the companion balance law,
%In the rest of
%section, we will show that $E(\hU)$ is indeed a convex function
%in $\hU$ under the sufficient condition $\mcp(\hh) > 0$ for hyperbolicity. Then, we will determine the corresponding entropy flux functions $H(\hU)$ and $J(\hU)$ % the entropy source term $K(\hU, \hB)$ associated with $E(\hU, \hB)$ 
\begin{align}\label{eq:efun-general}
  E(\hU)_t + H(\hU)_x = 0,
\end{align}
for smooth solutions $\hU$. Note that for non-smooth solutions, \eqref{eq:efun-general} holds with $=$ replaced by $\leq$.
%and 
%\begin{align}\label{eq:efun-general}
%E(\hU, \hB)_t + (H(\hU,\hB))_x \le 0,
%\end{align}
%for general weak solutions. 
We prove \Cref{thm:eef-sgswe-1d} with three intermediate results. Our first result is a technical condition that facilitates later computations.
%We start from introducing two lemmas.
\begin{lemma}[Gradient of $\hu$]\label{lem:velderivative}
  Let $\hqx \in \R^K$ be arbitrary, and let $\hh \in \R^K$ be such that $\mcp(\hh)$ is invertible. Defining $\hu$ as in \eqref{eq:uPCE1d}, 
%Assume $\mcp(\hh)$ is invertible. Let 
%\begin{align}\label{eq:vel}
%  \hu &= \mcp^{-1}(\hh)\hqx, 
%\end{align}
  then %the following relations hold
\begin{align}\label{eq:velderivative}
  \pfpx{\hu}{\hU} = \left[ \pfpx{\hu}{\hh}, \;\; \pfpx{\hu}{\hqx} \right] = \left[ -\mcp^{-1}(\hh)\mcp(\hu), \;\;\mcp^{-1}(\hh) \right]
%  \frac{\partial{\hu}}{\partial \hh} &= -\mcp^{-1}(\hh)\mcp(\hu),  &
%    \frac{\partial{\hu}}{\partial \hqx} &= \mcp^{-1}(\hh).
\end{align}
\end{lemma}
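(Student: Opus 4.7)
The plan is to differentiate the defining relation $\mathcal{P}(\hh)\hu = \hqx$ (equivalent to \eqref{eq:uPCE1d}) implicitly with respect to the components of $\hU = (\hh^\top, \hqx^\top)^\top$, treating $\hh$ and $\hqx$ as independent sets of variables. The block with respect to $\hqx$ is immediate, while the block with respect to $\hh$ requires the product-rule observation that $\mathcal{P}(\hh)$ itself depends on the differentiation variable.

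For the $\hqx$-block, holding $\hh$ fixed and differentiating $\hu = \mathcal{P}^{-1}(\hh)\hqx$ yields $\partial \hu/\partial \hqx = \mathcal{P}^{-1}(\hh)$ directly, using that $\mathcal{P}^{-1}(\hh)$ does not depend on $\hqx$.

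For the $\hh$-block, I will differentiate $\mathcal{P}(\hh)\hu = \hqx$ with respect to a single coefficient $\hh_k$, holding $\hqx$ (and the other components of $\hh$) fixed. The key observation is that by the definition \eqref{eq:pmatrix}, $\partial \mathcal{P}(\hh)/\partial \hh_k = \mathcal{M}_k$. The chain rule then yields
\begin{align*}
\mathcal{M}_k \hu + \mathcal{P}(\hh)\,\partial \hu/\partial \hh_k = 0,
\end{align*}
and since $\mathcal{P}(\hh)$ is invertible by assumption, one obtains $\partial \hu/\partial \hh_k = -\mathcal{P}^{-1}(\hh)\mathcal{M}_k \hu$. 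The final step is to recognize, via the column formula \eqref{eq:pmatrixproperty}, that $\mathcal{M}_k \hu$ is precisely the $k$-th column of $\mathcal{P}(\hu)$. Assembling the columns $k=1,\ldots,K$ then gives $\partial \hu/\partial \hh = -\mathcal{P}^{-1}(\hh)\mathcal{P}(\hu)$, which completes the proof.

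There is no serious obstacle here; the whole argument is a careful application of implicit differentiation together with the structural identity \eqref{eq:pmatrixproperty} for $\mathcal{P}(\cdot)$. The only point requiring a moment of care is remembering that $\mathcal{P}(\hh)$ carries its own $\hh$-dependence, so that implicit (rather than naive) differentiation of $\mathcal{P}^{-1}(\hh)\hqx$ is needed; once this is handled, the identification of $\mathcal{M}_k\hu$ as a column of $\mathcal{P}(\hu)$ makes the final assembly painless.
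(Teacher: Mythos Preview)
Your proof is correct and essentially matches the paper's: both compute $\partial\hu/\partial\hh_k = -\mcp^{-1}(\hh)\mathcal{M}_k\hu$ componentwise and then assemble the columns via \eqref{eq:pmatrixproperty}, with the $\hqx$-block immediate. The only cosmetic difference is that you implicitly differentiate $\mcp(\hh)\hu=\hqx$ via the product rule, whereas the paper explicitly invokes the derivative-of-inverse formula $\partial_t A^{-1} = -A^{-1}(\partial_t A)A^{-1}$ applied to $\hu=\mcp^{-1}(\hh)\hqx$; these are equivalent manipulations.
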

\begin{proof}
  If $A(t)$ is a $t$-parameterized matrix, then for any $t$ at which $A$ is invertible,
  \begin{align*}
    \ppx{t} A^{-1}(t) = -A^{-1}(t) \pfpx{A(t)}{t} A^{-1}(t).
  \end{align*}
  Applying this this to $\mcp$, we have,
  \begin{align}\label{eq:Pinvdiff}
    \frac{\partial \mcp^{-1}(\hh)}{\partial \hh_\ell} = 
    -\mcp^{-1}(\hh) \pfpx{\mcp(\hh)}{\hh_\ell}\mcp^{-1}(\hh) \overset{\eqref{eq:pmatrixproperty}}{=} -\mcp^{-1}(\hh)\mathcal{M}_\ell\mcp^{-1}(\hh),
  \end{align}
and hence,
\begin{align}
  \frac{\partial \hu}{\partial \hh_\ell} = \pfpx{\mcp^{-1}(\hh)}{\hh_\ell} \hqx \overset{\eqref{eq:Pinvdiff}}{=} -\mcp^{-1}(\hh)\mathcal{M}_\ell\mcp^{-1}(\hh) \hqx \overset{\eqref{eq:uPCE1d}}{=} \mcp^{-1}(\hh)\mathcal{M}_\ell\hu. 
%    -\mcp^{-1}(\hh)\mathcal{M}_\ell\mcp^{-1}(\hh)\hqx \overset{\eqref{eq:vel}}{=} -\mcp^{-1}(\hh)\mathcal{M}_\ell\hu. 
\end{align}
Therefore, 
\begin{equation}
\begin{aligned}
  &\frac{\partial \hu}{\partial \hh} = \left[-\mcp^{-1}(\hh)\mathcal{M}_1\hu,\;\; \cdots\;\; -\mcp^{-1}(\hh)\mathcal{M}_K\hu\right]
    \overset{\eqref{eq:pmatrixproperty}}{=}-\mcp^{-1}(\hh)\mcp(\hu),
\end{aligned}    
\end{equation}
  proving the desired relation for $\pfpx{\hu}{\hh}$. The relation for $\pfpx{\hu}{\hqx}$ is immediate from the definition \eqref{eq:uPCE1d}.
%Recall the symmetric matrix
%\begin{align}\label{eq:ph}
%&\mcp(\hh) = \sum_{\ell = 1}^{K}\hh_\ell \mathcal{M}_\ell = \left[\mathcal{M}_1\hh|\cdots|\mathcal{M}_K\hh\right],    && (\mathcal{M}_\ell)_{i,j} = \langle\phi_i,\phi_j\phi_\ell\rangle_\rho.
%\end{align}
%Using the identity
%\begin{equation}
%\begin{aligned}
%    &\frac{\partial (\mcp(\hh)\mcp^{-1}(\hh))}{\partial \hh_\ell} = \frac{\partial I}{\partial \hh_\ell}= 0,\\
%    \Rightarrow\;\;&\frac{\partial \mcp(\hh)}{\partial \hh_\ell}\mcp^{-1}(\hh) 
%    +  \mcp(\hh)\frac{\partial \mcp^{-1}(\hh)}{\partial \hh_\ell}= 0,\\
%    \overset{\eqref{eq:ph}}{\Rightarrow}\;\;&\mathcal{M}_\ell\mcp^{-1}(\hh)+\mcp(\hh)\frac{\partial \mcp^{-1}(\hh)}{\partial \hh_\ell} = 0,\\
%    \Rightarrow\;\;& \frac{\partial \mcp^{-1}(\hh)}{\partial \hh_\ell} = -\mcp^{-1}(\hh)\mathcal{M}_\ell\mcp^{-1}(\hh),
%\end{aligned}    
%\end{equation}
%we have
%\begin{align}
%    \frac{\partial \hu}{\partial \hh_\ell} = -\mcp^{-1}(\hh)\mathcal{M}_\ell\mcp^{-1}(\hh)\hqx \overset{\eqref{eq:vel}}{=} -\mcp^{-1}(\hh)\mathcal{M}_\ell\hu. 
%\end{align}
%Therefore, 
%\begin{equation}
%\begin{aligned}
%    &\frac{\partial \hu}{\partial \hh} = \left[-\mcp^{-1}(\hh)\mathcal{M}_1\hu|\cdots|-\mcp^{-1}(\hh)\mathcal{M}_K\hu\right]\\
%    =\;\;&-\mcp^{-1}(\hh)\left[\mathcal{M}_1\hu|\cdots|\mathcal{M}_K\hu\right]\\
%    \overset{\eqref{eq:ph}}{=}\;\;&-\mcp^{-1}(\hh)\mcp(\hu).
%\end{aligned}    
%\end{equation}
%The second equation in \eqref{eq:velderivative} is straightforward. 
\end{proof}

\begin{lemma}[Convexity of $E(\hU)$]\label{lem:sg-econvex}
    If $\mcp(\hh)$ is positive definite, then the function $E(\hU)$ defined in \eqref{eq:sg-efun1d} is convex in $\hU$.
\end{lemma}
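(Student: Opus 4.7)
The plan is to compute the Hessian $\pfnpx{E}{2}{\hU}$ of $E$ with respect to $\hU = (\hh^\top, \hqx^\top)^\top$ in $2\times 2$ block form, and then certify positive definiteness by a Schur complement argument. The potential-energy contribution $\tfrac{g}{2}\|\hh\|^2 + g\hh^\top \hB$ is (strongly) convex in $\hh$ and independent of $\hqx$, so it only adds $gI$ to the $\hh\hh$-block; all the difficulty sits in the kinetic term $\tfrac{1}{2}\hqx^\top \hu = \tfrac{1}{2}\hqx^\top \mcp^{-1}(\hh)\hqx$, which couples $\hh$ and $\hqx$ nonlinearly through $\mcp^{-1}(\hh)$.

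First I would compute the first-order partials. Symmetry of $\mcp^{-1}(\hh)$ gives $\pfpx{E}{\hqx} = \hu$ directly. For $\pfpx{E}{\hh}$, I would apply the chain-rule identity from \cref{lem:velderivative}, specifically its componentwise form $\pfpx{\hu}{\hh_\ell} = -\mcp^{-1}(\hh)\mathcal{M}_\ell \hu$ derived in \eqref{eq:Pinvdiff}, which yields
\begin{align*}
\pfpx{E}{\hh_\ell} = -\tfrac{1}{2}\hu^\top \mathcal{M}_\ell \hu + g\hh_\ell + g \hB_\ell.
\end{align*}
Differentiating once more and invoking \eqref{eq:pmatrixproperty} to recognize $[\mathcal{M}_1\hu, \ldots, \mathcal{M}_K\hu] = \mcp(\hu)$, together with the symmetry of $\mcp(\hu)$, I expect the Hessian to take the block form
\begin{align*}
\pfnpx{E}{2}{\hU} = \begin{pmatrix}
gI + \mcp(\hu)\,\mcp^{-1}(\hh)\,\mcp(\hu) & -\mcp(\hu)\,\mcp^{-1}(\hh) \\[2pt]
-\mcp^{-1}(\hh)\,\mcp(\hu) & \mcp^{-1}(\hh)
\end{pmatrix}.
\end{align*}

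To conclude, I would apply the Schur complement criterion relative to the $(2,2)$ block. By hypothesis $\mcp(\hh) > 0$, so $\mcp^{-1}(\hh) > 0$. The Schur complement collapses thanks to $\mcp^{-1}(\hh)\,\mcp(\hh)\,\mcp^{-1}(\hh) = \mcp^{-1}(\hh)$:
\begin{align*}
gI + \mcp(\hu)\mcp^{-1}(\hh)\mcp(\hu) - \mcp(\hu)\mcp^{-1}(\hh)\cdot \mcp(\hh)\cdot \mcp^{-1}(\hh)\mcp(\hu) = gI,
\end{align*}
which is positive definite since $g > 0$. Positive definiteness of both the $(2,2)$ block and its Schur complement implies $\pfnpx{E}{2}{\hU} > 0$, i.e., strict convexity of $E$.

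The main obstacle I anticipate is the bookkeeping in assembling the Hessian: the off-diagonal and top-left blocks each require re-packaging the $K$ vectors $\mcp^{-1}(\hh)\mathcal{M}_\ell \hu$ (for $\ell = 1,\ldots,K$) into the compact product $\mcp^{-1}(\hh)\mcp(\hu)$ via \eqref{eq:pmatrixproperty}, and the Hessian's symmetry requires the (nontrivial) symmetry of $\mcp(\hu)$ stemming from the full symmetry $(\mathcal{M}_k)_{\ell,m} = (\mathcal{M}_\ell)_{k,m} = (\mathcal{M}_m)_{\ell,k}$. Once that algebra is carried out cleanly, the Schur complement computation is essentially a one-line cancellation. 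An alternative that avoids the Hessian altogether would be to invoke the known joint convexity of the matrix-fractional map $(A,v)\mapsto v^\top A^{-1} v$ on $\{A>0\}\times \R^K$ and compose with the linear map $\hh \mapsto \mcp(\hh)$, but the direct Hessian/Schur computation is more self-contained and will be reusable when examining the entropy variable $\hV = \partial E / \partial \hU$ later.
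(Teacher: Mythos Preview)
Your proposal is correct and follows essentially the same route as the paper: both compute the identical block Hessian and then certify positive definiteness, with the paper evaluating the quadratic form directly as $g\|w_1\|^2 + (\mcp(\hu)w_1 - w_2)^\top \mcp^{-1}(\hh)(\mcp(\hu)w_1 - w_2)$ while you invoke the Schur complement relative to the $(2,2)$ block. These are two phrasings of the same completing-the-square identity, and the paper likewise concludes strict convexity.
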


\begin{proof}
  Using the definition \eqref{eq:uPCE1d} of $\hu$, note that,
  \begin{align}\label{eq:E-decomp}
    E(\hU) = \underbrace{\frac{1}{2} (\hqx)^\top\mathcal{P}^{-1}(\hh)\hqx}_{f_1(\hU)} + \underbrace{\frac{g}{2} \hh^\top \hh + g \hh^\top \hB}_{f_2(\hU)},
  \end{align}
  and therefore in particular,
  \begin{align}\label{eq:Hessian-decomp}
    \pfnpx{E}{2}{\hU} = \pfnpx{f_1}{2}{\hU} + \pfnpx{f_2}{2}{\hU}.
  \end{align}
  We will show that this Hessian is positive definite. Clearly we have,
    \begin{align}\label{eq:f2-hessian}
 \pfpx{f_2}{\hU} &= \left( g \hh^\top+g \hB^\top, \;\; 0 \right) \in \R^{1 \times 2 K}, & 
    \pfnpx{f_2}{2}{\hU} &= \left(\begin{array}{cc} g I & 0 \\ 0 & 0 \end{array}\right) \in \R^{2 K \times 2 K}.
  \end{align}
  Using the previous lemma, we can directly compute,
  \begin{align}\label{eq:f1-p1}
    \pfpx{f_1}{\hh} &= \frac{1}{2} (\hqx)^\top \pfpx{\hu}{\hh} \overset{\eqref{eq:velderivative}, \eqref{eq:uPCE1d}}{=} -\frac{1}{2} \hu^\top \mcp(\hu), &
    \pfpx{f_1}{\hqx} &=  (\hqx)^\top \mcp^{-1}(\hh) = \hu^\top, & 
  \end{align}
  which in turn implies,
  \begin{align*}
    \pfnpx{f_1}{2}{\hqx} &= \mcp^{-1}(\hh),  &
    \frac{\partial^2 f_1}{\partial \hh \partial \hqx} &\overset{\eqref{eq:velderivative}}{=} \left(-\mcp^{-1}(\hh) \mcp(\hu)\right)^\top = -\mcp(\hu) \mcp^{-1}(\hh),
  \end{align*}
  and finally,
  \begin{align*}
    \pfnpx{f_1}{2}{\hh} = \frac{1}{2} \ppx{\hh} \left( -\hu^\top \mcp(\hu)\right) \overset{\eqref{eq:velderivative}}{=} \mcp(\hu) \mcp^{-1}(\hh) \mcp(\hu)
  \end{align*}
  Hence, the Hessian of $f_1$ is,
  \begin{align*}
    \pfnpx{f_1}{2}{\hU} = \left(\begin{array}{cc} \mcp(\hu) \mcp^{-1}(\hh) \mcp(\hu) & -\mcp(\hu) \mcp^{-1}(\hh) \\ -\mcp^{-1}(\hh) \mcp(\hu) & \mcp^{-1}(\hh) \end{array}\right).
  \end{align*}
  A direct computation of the quadratic form associated to this Hessian using an arbitrary vector $(w_1^\top, w_2^\top)^\top \in R^{2 K}$ yields,
  \begin{align*}
    (w_1^\top, w_2^\top) \pfnpx{f_1}{2}{\hU} \left(\begin{array}{c} w_1 \\ w_2 \end{array}\right) = \left( \mcp(\hu) w_1 - w_2\right)^\top \mcp^{-1}(\hh) \left( \mcp(\hu) w_1 - w_2\right) \geq 0.
  \end{align*}
  Finally, combining the above with \eqref{eq:Hessian-decomp} and \eqref{eq:f2-hessian} yields,
    \begin{align*}
    (w_1^\top, w_2^\top)\pfnpx{E}{2}{\hU} \left(\begin{array}{c} w_1 \\ w_2 \end{array}\right) = g \|w_1\|^2 + \left( \mcp(\hu) w_1 - w_2\right)^\top \mcp^{-1}(\hh) \left( \mcp(\hu) w_1 - w_2\right),
  \end{align*}
  which is non-negative since $\mcp(\hh)$ is positive-definite. Therefore, $E$ is convex, as desired. In addition, since the above expression vanishes if and only if $w_1 = w_2 = 0$, then $E$ is also strictly convex.
%  since $\mcp(\hh)$ is positive definite, 
%  with equality if and only if $w_2 = \mcp(\hu) w_1$. Hence, $f_1$ is convex.
%  We will show that $f_1$ and $f_2$, and $f_3$ are all convex, and that their sum corresponds to a strictly convex function. The function $f_3$ is clearly (strictly) convex. Since $f_1$ does not depend on $\hqy$, we need only analyze $(\hh^\top, \hqx^\top)^\top \mapsto f_1(\hU)$. From the form of $f_1$, we have,
\end{proof}

%\begin{proof}
%Recall that $\hU = (\hh^\top,\hqx^\top)^\top$. Using \eqref{eq:velderivative}, we have
%\begin{equation}\label{eq:sg-evar-comp}
%\begin{aligned}
%    \frac{\partial E}{\partial \hh} &= \frac{1}{2}(-(\hqx)^\top\mcp^{-1}(\hh)\mcp(\hu) + g(\hh+\hB)^\top\\
%    & = -\frac{1}{2}\hu^\top\mcp(\hu)+ g(\hh+\hB)^\top,\\
%    \frac{\partial E}{\partial \hqx} &= \frac{1}{2}(\hu^\top + (\hqx)^\top\mcp^{-1}(\hh))= \hu^\top.
%\end{aligned}
%\end{equation}
%Applying \eqref{eq:velderivative} again, we have 
%
%\begin{equation*}
% \begin{aligned}
%     \frac{\partial^2 E}{\partial \hU^2} &= \begin{pmatrix}
%     \dfrac{\partial^2 E}{\partial \hh^2}&\dfrac{\partial^2 E}{\partial \hh \partial \hq}\\[2pt]
%     \dfrac{\partial^2 E}{\partial \hq\partial \hh}&\dfrac{\partial^2 E}{\partial \hq^2}
%     \end{pmatrix}
%     \\&=\begin{pmatrix}
%     \mcp(\hu)\mcp^{-1}(\hh)\mcp(\hu) + gI&-\mcp(\hu)\mcp^{-1}(\hh)\\
%     -\mcp^{-1}(\hh)\mcp(\hu)&\mcp^{-1}(\hh).
%     \end{pmatrix}
% \end{aligned}
% \end{equation*}
%Consider an arbitrary nonzero vector $(x^\top,y^\top)^\top\in\mathbb{R}^{2K}$. Then, using the positive-definiteness of $\mcp(\hh)$, we have
% {\small
% \begin{align*}
%     &(x^\top,y^\top)^\top\frac{\partial^2 E}{\partial \hU^2}\begin{pmatrix}
%     x\\y
%     \end{pmatrix}\\
%     = &g(x^\top x) + (\mcp(\hu)x - y)^\top\mcp^{-1}(\hh)(\mcp(\hu)x - y)>0,
% \end{align*}
% }
% i.e., $E(\hU,\hB)$ is a convex function with respect to $\hU$.
%\end{proof}
The final piece needed to prove \Cref{thm:eef-sgswe-1d} is to establish that the entropy function $E$ along with the flux function $H$ defined in \eqref{eq:sg-eflux1d} satisfy the companion balance law.
%Using \Cref{lem:velderivative} and \Cref{lem:sg-econvex}, we can determine an entropy-entropy flux pair for \eqref{eq:swesg41d}, which is essentially a stochastic variant to \eqref{eq:d-efun1d}-\eqref{eq:d-eflux1d}. 
\begin{lemma}[$(E,H)$ satisfy the companion balance law]\label{eq:eef-1d-companion}
  When $U$ is a smooth function, the pair $(E,H)$ defined in \eqref{eq:sg-epair1d} satisfies
%    Let 
%    \begin{equation}\label{eq:sg-eflux}
%        \begin{aligned}
%            &H(\hU, \hB) = \frac{1}{2}\hu^\top\mcp(\hqx)\hu +g\hqx^\top(\hh+\hB).\\       
%        \end{aligned}
%    \end{equation}    
%    Then for any smooth solution $\hU$ of \eqref{eq:swesg41d}, we have
    \begin{align}\label{eq:sgecond-eq}
        E(\hU)_t + H(\hU)_x = 0.
    \end{align}
\end{lemma}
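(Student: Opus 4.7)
The plan is to verify the compatibility condition \eqref{eq:compatibility1d} for the pair $(E,H)$, which together with smoothness of $\hU$ and the balance law \eqref{eq:swesg41d} immediately yields \eqref{eq:sgecond-eq}. That is, letting $\hV \coloneqq \bigl(\partial E/\partial \hU\bigr)^\top$ be the entropic variable, multiplying \eqref{eq:swesg41d} by $\hV^\top$ gives $E(\hU)_t + \hV^\top \widehat{F}(\hU)_x - \hV^\top \widehat{S}(\hU) = 0$, so the task reduces to showing
\begin{align*}
  \hV^\top \bigl(\widehat{F}(\hU)_x - \widehat{S}(\hU)\bigr) = H(\hU)_x.
\end{align*}

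The first step is to compute $\hV$ in closed form. Using \eqref{eq:E-decomp}--\eqref{eq:f1-p1} already established in the proof of \Cref{lem:sg-econvex}, I get
\begin{align*}
  \hV = \begin{pmatrix} g\hh + g\hB - \tfrac{1}{2}\mcp(\hu)\hu \\ \hu \end{pmatrix}.
\end{align*}
The second step is to expand $\hV^\top \widehat{F}_x$ and $H_x$ using the product rule together with the linearity of $\mcp$ (which implies $\mcp(\phi(x))_x = \mcp(\phi_x)$ for any smooth vector-valued $\phi$), then add $-\hV^\top \widehat{S} = g\, \hu^\top \mcp(\hh)\widehat{B_x}$. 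In this expansion the crucial identities are the ``commutative'' relations \eqref{eq:commute}, together with $\mcp(\hh)\hu = \hqx$ coming from the definition \eqref{eq:uPCE1d}.

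The third step is to verify that the two expansions coincide, which I expect to happen in two separate blocks:
\begin{itemize}[itemsep=1pt,topsep=1pt,leftmargin=15pt]
\item The \emph{kinetic} terms: the pieces of $\hV^\top \widehat{F}_x$ involving $\hu$ and $\hqx$ only (namely $-\tfrac{1}{2}\hu^\top \mcp(\hu)(\hqx)_x + \hu^\top(\mcp(\hqx)\hu)_x$) should match the first two terms of $H_x$ (namely $\hu^\top \mcp(\hqx)\hu_x + \tfrac{1}{2}\hu^\top \mcp((\hqx)_x)\hu$) after applying $\mcp((\hqx)_x)\hu = \mcp(\hu)(\hqx)_x$ from \eqref{eq:commute}.
\item The \emph{potential/source} terms: the combination $g \hh^\top(\hqx)_x + g\bigl((\hqx)^\top \hB\bigr)_x + \tfrac{g}{2}\hu^\top(\mcp(\hh)\hh)_x + g\hu^\top \mcp(\hh)\widehat{B_x}$ should match $g\bigl((\hqx)^\top \hh\bigr)_x + g\bigl((\hqx)^\top \hB\bigr)_x$; the key simplification is $(\mcp(\hh)\hh)_x = 2\mcp(\hh)\hh_x$ (again from \eqref{eq:commute}) and $\hu^\top \mcp(\hh) = \hqx^\top$, together with the convention $\widehat{B_x} = (\hB)_x$ since $B$ is time-independent.
\end{itemize}

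The main obstacle is bookkeeping: the expression $\hV^\top \widehat{F}_x$ produces many cross terms, and one must recognize at each stage where to apply \eqref{eq:commute} to merge or cancel quantities of the form $\hu^\top \mcp(\hqx)\hu_x$, $\hu^\top \mcp(\hu_x)\hqx$, etc. No new ideas are needed beyond the identities already established; once these cancellations are executed, the compatibility condition holds and hence \eqref{eq:sgecond-eq} follows.
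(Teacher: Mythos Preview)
Your plan is correct and follows essentially the same route as the paper: compute $\hV$ explicitly from the convexity lemma, then verify the compatibility condition by direct expansion using the commutativity relation \eqref{eq:commute} and $\mcp(\hh)\hu=\hqx$. The paper organizes the computation slightly differently---it splits $E=E_1+E_2$, $H=H_1+H_2$ (with $E_2=g\hh^\top\hB$, $H_2=g(\hqx)^\top\hB$), handles the bottom/source contributions together to recover $(H_2)_x$, and then proves the stronger gradient identity $\partial_{\hU}E_1\,\partial_{\hU}\hF=\partial_{\hU}H_1$ using the explicit flux Jacobian \eqref{eq:x-jacobian1d}. Your version avoids the flux Jacobian by differentiating $\hF$ directly in $x$, which is a little more elementary but proves only the contracted statement; either is sufficient here.

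One bookkeeping slip to fix in your potential block: the term coming from the $g\hB$ part of $\hV$ dotted with $(\hqx)_x$ is $g\hB^\top(\hqx)_x$, not $g\bigl((\hqx)^\top\hB\bigr)_x$ as you wrote. With the correct term, the source contribution $g\hu^\top\mcp(\hh)\widehat{B_x}=g(\hqx)^\top(\hB)_x$ combines with $g\hB^\top(\hqx)_x$ to give $g\bigl((\hqx)^\top\hB\bigr)_x$, and the remaining two terms give $g\bigl((\hqx)^\top\hh\bigr)_x$ exactly as you indicate.
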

\begin{proof}
  The compatibility condition we seek to show, equivalent to \eqref{eq:sgecond-eq}, is,
  \begin{align}\label{eq:compatibility-1d}
    \pfpx{E}{\hU} \left( \pfpx{\hF}{\hU} \pfpx{\hU}{x} - \hS \right) = \pfpx{H}{x},
  \end{align}
  cf. \eqref{eq:compatibility1d}. To proceed we split both entropy functions into two pieces:
  \begin{subequations}
  \begin{align}
    E(\hU) &= E_1(\hU) + E_2(\hU), & E_1(\hU) &\coloneqq \frac{1}{2} \left( (\hqx)^\top \hu + g \|\hh\|^2 \right), & E_2(\hU) &\coloneqq g \hh^\top \hB, \\\label{eq:H-decomp}
    H(\hU) &= H_1(\hU) + H_2(\hU), & H_1(\hU) &\coloneqq \frac{1}{2} \hu^\top \mcp(\hqx) \hu + g (\hqx)^\top \hh, & H_2(\hU) &= g (\hqx)^\top \hB
  \end{align}
  \end{subequations}
  From \eqref{eq:E-decomp}, \eqref{eq:f2-hessian}, and \eqref{eq:f1-p1}, we have already computed the gradient of $E$:
  \begin{align}\label{eq:E-gradient}
    \pfpx{E_1}{\hU} &= \left( -\frac{1}{2} \hu^\top \mcp(\hu) + g \hh^\top, \;\; \hu^\top \right), & 
    \pfpx{E_2}{\hU} &= \left( g \hB^\top, \;\; 0 \right).
  \end{align}
  Combining these expressions with the flux Jacobian in \eqref{eq:x-jacobian1d} and the source term in \eqref{eq:sgfluxessource1d} yields,
  \begin{subequations}\label{eq:compatibility-decomp}
  \begin{align}
    -\pfpx{E_1}{\hU} \hS + \pfpx{E_2}{\hU} \left( \pfpx{\hF}{\hU}\pfpx{\hU}{x} - \hS \right) = g \hB^\top \pfpx{\hqx}{x} + g \hqx^\top \hB_x \stackrel{\eqref{eq:H-decomp}}{=} \pfpx{H_2}{x}
  \end{align}
  Note then that if we are able to show,
  \begin{align}\label{eq:cl-compatibility}
    \pfpx{E_1}{\hU} \pfpx{\hF}{\hU} = \pfpx{H_1}{\hU},
  \end{align}
  \end{subequations}
  then the expressions \eqref{eq:compatibility-decomp} are equivalent to \eqref{eq:compatibility-1d}. Therefore, we are left only to show \eqref{eq:cl-compatibility}. A direct computation with \eqref{eq:E-gradient} and \eqref{eq:x-jacobian1d} yields,
  \begin{align*}
    \pfpx{E_1}{\hU} \pfpx{\hF}{\hU} = \left( g (\hqx)^\top - \hu^\top \mcp(\hqx) \mcp^{-1}(\hh) \mcp(\hu), \;\;\; g \hh^\top + \frac{1}{2} \hu^\top \mcp(\hu) + \hu^\top \mcp(\hqx) \mcp^{-1}(\hh)  \right)
  \end{align*}
  On the other hand, we have the expressions,
  \begin{align*}
    \ppx{\hh} \frac{1}{2} \hu^\top \mcp(\hqx) \hu &= \hu^\top \mcp(\hqx) \pfpx{\hu}{\hh} \stackrel{\eqref{eq:velderivative}}{=} -\hu^\top \mcp(\hqx) \mcp^{-1}(\hh) \mcp(\hu), \\
    \ppx{\hqx} \frac{1}{2} \hu^\top \mcp(\hqx) \hu &= \hu^\top \mcp(\hqx) \pfpx{\hu}{\hqx} + \frac{1}{2} \left( \ppx{\hqx} z^\top \mcp(\hqx) z\right)\Big|_{z\gets \hu} \stackrel{\eqref{eq:acb},\eqref{eq:velderivative}}{=} \hu^\top \mcp(\hqx) \mcp^{-1}(\hh) + \frac{1}{2} \hu^\top \mcp(\hu) 
  \end{align*}
  and using these to compute $\pfpx{H_1}{\hU}$ shows that \eqref{eq:cl-compatibility} is true, completing the proof.
\end{proof}
The proof of \Cref{thm:eef-sgswe-1d} is complete: Lemmas \eqref{lem:sg-econvex} and \eqref{eq:eef-1d-companion} imply that $(E,H)$ as defined in \eqref{eq:sg-epair1d} are an entropy-entropy flux pair for \eqref{eq:swesg41d}.

\begin{remark}
The quantities,
\begin{align}\label{entrvar}
  \hV \coloneqq  \left(\pfpx{E} {\hU}\right)^\top &=\left(-\frac{1}{2}\hu^\top\mcp(\hu) + g(\hh+\hB)^\top,\hu^\top\right)^\top, & %^\top, & 
  \Psi &\coloneqq \hV \widehat{F} - H \stackrel{\eqref{eq:uPCE1d},\eqref{eq:sg-eflux1d}}{=} \frac{1}{2}g\hu^\top\mcp(\hh)\hh,
\end{align} 
  are called the \textit{entropy variable} and \textit{stochastic energy potential}, respectively. These variables serve important roles in the construction of the energy conservative and the energy stable schemes that we develop later.
\end{remark}

\section{Well-Balanced Energy Conservative And Energy Stable Schemes
  for SG 1D SWE}\label{sec:schemes}
In this section, we present several well-balanced energy conservative
and energy stable numerical scheme for SG SWE. 
%For notational
%simplicity, we consider the SG formulation for 1D SWE
%\eqref{eq:swesg41d} in this section and extend the results for
%\eqref{eq:swesg4} in the later section. 
The schemes designed below are
stochastic extensions of the schemes developed in
\cite{fjordholm2011well}. Our entropy-entropy flux pairs developed in \Cref{sec3} will be crucial ingredients for energy conservative and energy stable schemes for the SG formulation \eqref{eq:swesg41d}-\eqref{eq:uPCE1d}.
%constructed entropy pair and entropy based formulation in
%Section~\ref{sec3} which will be used to construct schemes below.

    %     \subsection{SG Formulation for 1D SWE}

We also need to specify the well-balanced property we are interested in: By ``well-balanced'', we mean that the scheme can preserve the stochastic ``lake-at-rest'' state exactly at the discrete level. 
\begin{definition}[Well-Balanced SGSWE Property, \cite{doi:10.1137/20M1360736}]\label{def:stochastic-lake-at-rest-1d}
  We say that a solution $(h_P, q_P)$ to \eqref{eq:swesg41d} is well-balanced if it satisfies the \emph{stochastic} ``lake-at-rest'' solution,
    \begin{equation}\label{eq:lake-at-rest}
      q_P(x,t,\xi) \equiv 0,\quad h_P(x,t,\xi) + \Pi_P[B](x,t,\xi) \equiv C(\xi),
    \end{equation}    
    where $C(\xi)$ is a random scalar depending only on $\xi$, $\Pi_P$ corresponds to a polynomial truncation, cf. \eqref{eq:PCEex}, and subscripts $P$ refer to the (stochastic) discrete solution on the subspace $P$.
%    represents the truncations of the original random field (see \eqref{eq:sg-ansatz1d}). 
  In terms of our previous notation for $P$-expansion coefficients, equation \eqref{eq:lake-at-rest} is equivalent to the following vector equation
    \begin{equation}\label{eq:lake-at-rest-vec}
      \hq \equiv \boldsymbol{0},\quad \hh + \hB \equiv \widehat{C},\;\; \forall (x,t)\in \mathcal{D}\times[0,T],
    \end{equation}        
    where $\mathcal{D}$ is the spatial domain and $T$ is the terminal time.
\end{definition}
We emphasize that even without introducing the lake-at-rest definition \eqref{eq:lake-at-rest}, the vector equation \eqref{eq:lake-at-rest-vec} itself is a steady state of the SG system \eqref{eq:swesg41d}.
\subsection{Energy Conservative Schemes}
We consider the semi-discrete form for FV schemes for \eqref{eq:swesg41d} over a uniform mesh in the $x$ variable:
\begin{align}\label{eq:fvsemi-discrete}
  \dfrac{d}{dt}\bU_{i} = -\dfrac{\mathcal{F}_{i+\frac{1}{2}}-\mathcal{F}_{i-\frac{1}{2}}}{\Delta x}+\boldsymbol{S}_{i}.
\end{align}
Here, $\bU_{i}\approx
\frac{1}{\Delta x}\int_{\mathcal{I}_{i}} \hU(x,t)dx$
is the approximation of the cell averages of $\hU$ over cells
$\mathcal{I}_{i} \coloneqq [ x_{i-1/2}, x_{i+1/2}]$ at time $t$, and $\Delta x = |\mathcal{I}_i| = x_{i+1/2} - x_{i-1/2}$. The terms $\mathcal{F}_{i\pm1/2}$ are numerical fluxes at the boundaries of the
cells, which are functions of neighboring states, e.g., $\mathcal{F}_{i+1/2}$ is a function of $\bU_i$ and $\bU_{i+1}$.
The term $\bs{S}_{i}\approx
\frac{1}{\Delta x}\int_{\mathcal{I}_{i}}
\widehat{S}(\hU,\hB)dx$ is a discretization of the
source term, which we will design below to be well-balanced. To reiterate our notation: normal typeset capital letters (sometimes with ``hat'' notation) refers to degrees of freedom associated to discretizing \textit{only} the stochastic variable $\xi$, i.e., $(\hU, \hh, \hq, \hB)$. Boldface notation with subscripts $i$ refers to degrees of freedom associated to a subsequent discretization of the spatial variable $x$ over cell $\mathcal{I}_i$, i.e., $(\bs{U}_i, \bs{h}_i, \bs{q}_i, \bs{B}_i)$. %Similar hat versus boldface notation will apply to other dependent variables, e.g., $\hB$ versus $\bB_i$.
We define the discrete velocity variable $\bs{u}_{i}$ in a manner analogous to \eqref{eq:uPCE1d}:
\begin{align}\label{eq:ui-def}
  \bs{u}_i \coloneqq \mcp(\bs{h}_i)^{-1} \bs{q}_i.
\end{align}
Discrete entropic quantities are derived from the discrete conservative variables $\bs{U}_i$ and velocity variable $\bs{u}_i$. I.e., the following are direct generalizations of the definition of $E(\hU)$ in \eqref{eq:sg-efun1d}, and of $(\hV,\Psi)$ in \eqref{entrvar}:
\begin{subequations}
\begin{align}
  \label{eq:Ei-def} \bs{E}_i &\coloneqq \frac{1}{2} \left( \bs{q}_i^\top \bs{u}_i + g \| \bs{h}_i\|^2 \right) + g \bs{h}_i^\top \bs{B}_i, \\
  \label{eq:Vi-def} \bs{V}_i &\coloneqq \left(\pfpx{\bs{E}_i}{\bs{U}_i}\right)^\top = \left(-\frac{1}{2} \bs{u}_i^\top \mcp(\bs{u}_i) + g(\bs{h}_i + \bs{B}_i)^\top, \;\; \bs{u}_i^\top \right)^\top, \\
  \label{eq:Psii-def} \bs{\Psi}_i &\coloneqq \bs{V}_i \hF(\bs{U}_i) - H(\bs{U}_i) = \frac{1}{2} g \bs{u}_i^\top \mcp(\bs{h}_i) \bs{h}_i
\end{align}
\end{subequations}

%Note that, since the
%``hat'' vector notation with subscript has been used for the notation
%of polynomial moments, we will use the bold letter $\boldsymbol{U}$
%with subscripts to denote the cell averages of the vector
%$\hU$. Similar notation will be used for other variables, for example,
%$(\boldsymbol{B}, \hB), (\bV,\hV)$, and etc. 

We now introduce some notation that is used in \cite{fjordholm2011well} for averages and jumps at cell interfaces:
\begin{align}\label{eq:jump-and-average}
  \overline{\bs{a}}_{i+1/2} &\coloneqq \frac{1}{2}(\bs{a}_{i+1} + \bs{a}_{i}), &
  \llbracket \bs{a}\rrbracket_{i+1/2} &\coloneqq \bs{a}_{i+1} - \bs{a}_{i},
\end{align}
where $\bs{a}_i$ is the cell average over $\mathcal{I}_i$. The expressions above are equivalent to,
\begin{align}\label{eq:jump-average-property}
  \bs{a}_i = \overline{\bs{a}}_{i+1/2} - \frac{\llbracket \bs{a}\rrbracket_{i+1/2}}{2} = \overline{\bs{a}}_{i-1/2} + \frac{\llbracket \bs{a}\rrbracket_{i-1/2}}{2},
\end{align}
and all these expressions are valid regardless of the size of $\bs{a}$ (e.g., both row and column vectors are allowed). We will require some additional technical results for interfacial averages and jumps.
\begin{lemma}
  Let $\bs{a}_i, \bs{b}_i$ be any spatially discrete quantities. Then:
  \begin{subequations}\label{eq:relations}
  \begin{align}\label{eq:relation1}
    \mcp(\overline{\boldsymbol{a}}\ph)\dbracket{\boldsymbol{a}}\ph &= 
    \frac{1}{2}\dbracket{\mcp(\boldsymbol{a})\boldsymbol{a}}\ph. \\\label{eq:relation2}
    \dbracket{\boldsymbol{a}}\ph^\top\overline{\boldsymbol{b}}\ph+\dbracket{\boldsymbol{b}}\ph^\top\overline{\boldsymbol{a}}\ph &= 
    \dbracket{\boldsymbol{a}^\top\boldsymbol{b}}\ph
  \end{align}
  \end{subequations}
\end{lemma}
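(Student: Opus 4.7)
My plan is to handle the two identities \eqref{eq:relation1} and \eqref{eq:relation2} separately, since they rely on different structural facts. Both are local identities at a fixed interface, so I will suppress the subscript $i+\frac{1}{2}$ and write $\overline{\bs{a}} = \tfrac{1}{2}(\bs{a}_{i+1}+\bs{a}_i)$ and $\llbracket\bs{a}\rrbracket = \bs{a}_{i+1}-\bs{a}_i$ throughout.

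For \eqref{eq:relation1}, the key ingredient is linearity of the operator $\mcp(\cdot)$, which is immediate from its definition \eqref{eq:pmatrix}. Linearity yields $\mcp(\overline{\bs{a}}) = \tfrac{1}{2}\bigl(\mcp(\bs{a}_{i+1}) + \mcp(\bs{a}_i)\bigr)$, so expanding the product $\mcp(\overline{\bs{a}})\llbracket\bs{a}\rrbracket$ produces four terms. The two diagonal terms assemble directly into $\tfrac{1}{2}\llbracket\mcp(\bs{a})\bs{a}\rrbracket$, and the plan is to kill the two cross terms $-\mcp(\bs{a}_{i+1})\bs{a}_i + \mcp(\bs{a}_i)\bs{a}_{i+1}$ using the commutativity identity \eqref{eq:commute}, which gives $\mcp(\bs{a}_{i+1})\bs{a}_i = \mcp(\bs{a}_i)\bs{a}_{i+1}$. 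That leaves exactly the claimed right-hand side.

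For \eqref{eq:relation2}, the identity is purely algebraic (no $\mcp$ structure enters), and I would just expand the two products on the left-hand side in terms of $\bs{a}_i, \bs{a}_{i+1}, \bs{b}_i, \bs{b}_{i+1}$. Using that each term $\bs{a}_j^\top \bs{b}_k$ is a scalar and thus equals $\bs{b}_k^\top \bs{a}_j$, the four mixed-index terms $\bs{a}_{i+1}^\top\bs{b}_i$, $\bs{a}_i^\top\bs{b}_{i+1}$, $\bs{b}_{i+1}^\top\bs{a}_i$, $\bs{b}_i^\top\bs{a}_{i+1}$ cancel in pairs, leaving $\bs{a}_{i+1}^\top\bs{b}_{i+1} - \bs{a}_i^\top\bs{b}_i = \llbracket \bs{a}^\top\bs{b}\rrbracket$.

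Neither step presents a genuine obstacle: \eqref{eq:relation1} is a one-line consequence of linearity plus the commutativity lemma \eqref{eq:commute}, and \eqref{eq:relation2} is a standard discrete product rule that holds for any inner-product pairing. The only thing worth being careful about is invoking \eqref{eq:commute} in the right direction when collapsing the cross terms in \eqref{eq:relation1}; this is why the symmetry of the triple-product matrices $\mathcal{M}_k$ (which underlies \eqref{eq:commute}) is the essential structural input.
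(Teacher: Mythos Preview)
Your proposal is correct and follows essentially the same approach as the paper: for \eqref{eq:relation1} the paper also uses linearity of $\mcp$ together with the commutativity identity \eqref{eq:commute} to collapse the expansion, and for \eqref{eq:relation2} it performs exactly the direct expansion you describe. The only cosmetic difference is that the paper writes the computation for \eqref{eq:relation1} in a single line without explicitly separating the diagonal and cross terms.
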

\begin{proof}
Due to linearity of $\mcp$, then,
    \begin{align*}
        \mcp(\overline{\boldsymbol{a}}_{i+\frac{1}{2}})\dbracket{\boldsymbol{a}}_{i+\frac{1}{2}} &\overset{\eqref{eq:pmatrix}}{=} \mcp\left(\frac{1}{2}(\boldsymbol{a}_{i+1}+\boldsymbol{a}_{i})\right)(\boldsymbol{a}_{i+1}-\boldsymbol{a}_{i})
        \overset{\eqref{eq:commute}}{=} \frac{1}{2}\left(\mcp(\boldsymbol{a}_{i+1})\boldsymbol{a}_{i+1}-\mcp(\boldsymbol{a}_{i})\boldsymbol{a}_{i}\right)
        = \frac{1}{2}\dbracket{\mcp(\boldsymbol{a})\boldsymbol{a}}_{i+\frac{1}{2}},
    \end{align*}        
     which proves \eqref{eq:relation1}. Similarly, \eqref{eq:relation2} can be proven directly:%For any two quantity $\boldsymbol{a}$ and $\boldsymbol{b}$,
     \begin{align*}
       \dbracket{\boldsymbol{a}}_{i+\frac{1}{2}}^\top\overline{\boldsymbol{b}}_{i+\frac{1}{2}}+\dbracket{\boldsymbol{b}}_{i+\frac{1}{2}}^\top\overline{\boldsymbol{a}}_{i+\frac{1}{2}} &= \frac{1}{2}\left(\left(\boldsymbol{a}_{i+1}-\boldsymbol{a}_{i}\right)^\top\left(\boldsymbol{b}_{i+1}+\boldsymbol{b}_{i}\right)+\left(\boldsymbol{b}_{i+1}-\boldsymbol{b}_{i}\right)^\top\left(\boldsymbol{a}_{i+1}+\boldsymbol{a}_{i}\right)\right) \\
         &= \boldsymbol{a}_{i+1}^\top\boldsymbol{b}_{i+1} - \boldsymbol{a}_{i}^\top\boldsymbol{b}_{i}
         = \dbracket{\boldsymbol{a}^\top\boldsymbol{b}}_{i+\frac{1}{2}}.
     \end{align*}
\end{proof}

%Using the \textit{entropy variables} $\hV$ defined in \eqref{entrvar}, we introduce the \textit{stochastic energy potential} $\Psi$,
%\begin{equation}\label{eq:energypotential}
%    \begin{aligned}
%        &\Psi \coloneqq \hV^\top\widehat{F} - H \overset{\eqref{eq:vel},\eqref{eq:sg-eflux}}{=\joinrel=\joinrel=} \frac{1}{2}g\hu^\top\mcp(\hh)\hh.\\
%    \end{aligned}
%\end{equation}

We now make particular definitions for energy conservative and energy stable schemes for one-dimensional systems of balance laws. To provide context, with no source terms (i.e. $\bs{S}_i = 0$) then spatial discretizations of the form \eqref{eq:fvsemi-discrete} are called \textit{conservative} schemes since they imply,
\begin{align}\label{eq:ec-global}
  \frac{d}{dt} \sum_{i \in [M]} \Delta x \bU_i(t) = \left[ \mathcal{F}_{1/2} - \mathcal{F}_{M+1/2} \right], \hskip 20pt \textrm{(vanishing source, $\bs{S}_i = 0$)},
\end{align}
and in particular with periodic boundary conditions, then this implies that the cumulative amount of $\hU$ in the system is constant in time.\footnote{For non-periodic boundary conditions, the energy would increase/decrease depending on the boundary conditions and their corresponding impact on the boundary fluxes.}

To translate this concept to the notion of an energy conservative scheme, note that an entropy-entropy flux pair $(E,H)$ introduced in \cref{sec3} is explicitly a function of the state $\hU$ and inputs in the source term (here, $\hB$). Hence, the semi-discrete form \eqref{eq:fvsemi-discrete} can be transformed into a semi-discrete form for the companion balance law \eqref{eq:efun-general}. Then we call \eqref{eq:fvsemi-discrete} energy conservative if it implies a conservative scheme for the companion balance law that describes the evolution of the entropy (energy).
\begin{definition}[Energy conservative and energy stable schemes]
  Suppose that the system of balance laws \eqref{eq:swesg41d} has an entropy-entropy flux pair $(E,H)$ where $E(\hU)$ can be interpreted as energy for the system. Then the semi-discrete FV scheme \eqref{eq:fvsemi-discrete} is an Energy Conservative (EC) scheme if it can be rewritten as the following semi-discrete form for the evolution of the numerical cell averages $\bs{E}_i$ of $E$:
  \begin{align}\label{eq:ec-scheme-def}
    \frac{d}{dt} \bs{E}_i(t) &= -\frac{1}{\Delta x} \left( \mathcal{H}_{i+1/2} - \mathcal{H}_{i-1/2}\right), & i &\in [M],
  \end{align}
  where $\mathcal{H}_{i+1/2}$ is some numerical entropy flux at the interface location $x = x_{i+1/2}$. The scheme \eqref{eq:fvsemi-discrete} is called an Energy Stable (ES) scheme if
  \begin{align}\label{eq:es-condition}
    \frac{d}{dt} \bs{E}_i(t) &\leq -\frac{1}{\Delta x} \left( \mathcal{H}_{i+1/2} - \mathcal{H}_{i-1/2}\right), & i &\in [M].
  \end{align}
\end{definition}
Note that the definitions above are cell-wise conditions that are stronger than a global condition such as \eqref{eq:ec-global}.

\subsection{An EC scheme for the SGSWE}
In this section we present an EC scheme for the one-dimensional SGSWE system \eqref{eq:swesg41d}. We use the conservative scheme \eqref{eq:fvsemi-discrete}, with the following choices of flux and source terms:
\begin{subequations}\label{eq:sgswe-ec}
  \begin{align}\label{eq:ec-flux}
  \mathcal{F}_{i+1/2} = \mathcal{F}^{\mathrm{EC}}_{i+1/2} &\coloneqq \begin{pmatrix}
    \mcp(\bbh_{i+\frac{1}{2}})\bbu_{i+\frac{1}{2}}\vspace{0.5em}\\
    \frac{g}{2}\left(\overline{\mcp(\bh)\bh}\right)_{i+\frac{1}{2}} + \mcp(\bbu_{i+\frac{1}{2}})\mcp(\bbh_{i+\frac{1}{2}})\bbu_{i+\frac{1}{2}}
  \end{pmatrix}, \\\label{eq:ec-source}
    \boldsymbol{S}_{i} &= \begin{pmatrix}
    \boldsymbol{0}\vspace{0.5em}\\
    -\frac{g}{2\Delta x}\left(\mcp(\bbh_{i+\frac{1}{2}})\dbracket{\boldsymbol{B}}_{i+\frac{1}{2}}+\mcp(\bbh_{i-\frac{1}{2}})\dbracket{\boldsymbol{B}}_{i-\frac{1}{2}}\right).
    \end{pmatrix}
\end{align}
\end{subequations}
Above, the interfacial averages $\bbu_{i+1/2}$ are computed as defined in \eqref{eq:jump-and-average}. 
Our main result for this scheme is as follows.
\begin{theorem}[EC Scheme]\label{thm:ec-scheme}
  Suppose the bottom topography function $B$ is independent of
  time. Consider the semi-discrete scheme \eqref{eq:fvsemi-discrete}
  for the SGSWE system \eqref{eq:swesg41d}. Suppose that the flux and
  source terms are selected as in \eqref{eq:sgswe-ec}. Then, this is a well-balanced EC scheme with local truncation error $\mathcal{O}(\Delta x^2)$.
\end{theorem}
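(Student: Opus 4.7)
The plan is to establish three independent claims: well-balancedness, energy conservation, and the second-order local truncation error estimate. I address them in order of difficulty.

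\emph{Well-balancedness.} Substitute the stochastic lake-at-rest state \eqref{eq:lake-at-rest-vec}. Since $\bs{q}_i\equiv\bs{0}$ we have $\bs{u}_i\equiv\bs{0}$ by \eqref{eq:ui-def}, hence $\bbu_{i\pm 1/2}=\bs{0}$; the first components of both $\mathcal{F}^{EC}_{i\pm 1/2}$ and $\bs{S}_i$ vanish, so the mass equation is trivially balanced. For the momentum component, the flux divergence reduces to a telescoping sum involving $\overline{\mcp(\bh)\bh}_{i\pm 1/2}$. Using $\dbracket{\bh}_{i\pm 1/2}=-\dbracket{\bs{B}}_{i\pm 1/2}$ (which follows from $\bh+\bs{B}\equiv\widehat{C}$) together with identity \eqref{eq:relation1}, rewrite $\mcp(\bbh_{i\pm 1/2})\dbracket{\bs{B}}_{i\pm 1/2}=-\tfrac{1}{2}\dbracket{\mcp(\bh)\bh}_{i\pm 1/2}$; substitution into \eqref{eq:ec-source} shows the source exactly cancels the momentum flux divergence.

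\emph{Energy conservation.} This is the central part. Following Tadmor's framework, multiply \eqref{eq:fvsemi-discrete} by $\bs{V}_i^{\,\top}$ from \eqref{eq:Vi-def}; by the chain rule and \eqref{eq:Ei-def} the LHS becomes $\tfrac{d}{dt}\bs{E}_i$. On the RHS, use $\bs{V}_i=\bbV_{i\pm 1/2}\mp\tfrac{1}{2}\dbracket{\bs{V}}_{i\pm 1/2}$ and the analogous splitting $\bs{u}_i=\bbu_{i\pm 1/2}\mp\tfrac{1}{2}\dbracket{\bs{u}}_{i\pm 1/2}$ (applied inside $\bs{V}_i^{\,\top}\bs{S}_i$ through \eqref{eq:ec-source}) to separate ``arithmetic-mean'' and ``jump'' contributions. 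The mean contributions assemble into a telescoping numerical entropy flux
\[
  \mathcal{H}_{i+1/2}=\bbV_{i+1/2}^{\,\top}\mathcal{F}^{EC}_{i+1/2}-\overline{\Psi}_{i+1/2}-\tfrac{g}{4}\,\dbracket{\bs{u}}_{i+1/2}^{\,\top}\mcp(\bbh_{i+1/2})\dbracket{\bs{B}}_{i+1/2},
\]
where $\Psi$ is given in \eqref{eq:Psii-def}. What remains is a stochastic, topography-augmented Tadmor identity at each interface,
\[
  \dbracket{\bs{V}}_{i+1/2}^{\,\top}\mathcal{F}^{EC}_{i+1/2}-g\,\bbu_{i+1/2}^{\,\top}\mcp(\bbh_{i+1/2})\dbracket{\bs{B}}_{i+1/2}=\dbracket{\Psi}_{i+1/2}.
\]
To verify it, expand $\dbracket{\bs{V}}_{i+1/2}$ block-by-block from \eqref{eq:Vi-def} and pair against \eqref{eq:ec-flux}: the $g\bh$ component paired with $\tfrac{g}{2}\overline{\mcp(\bh)\bh}$ collapses via \eqref{eq:relation1}; the $g\bs{B}$ component paired with the mass flux $\mcp(\bbh)\bbu$ yields precisely the source-absorbing term on the left; the velocity block paired with the triple product $\mcp(\bbu)\mcp(\bbh)\bbu$ closes after applying \eqref{eq:relation2} and the commutation identity \eqref{eq:commute}. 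The main obstacle lies in this interfacial algebra: every scalar product in the deterministic analogue of \cite{fjordholm2011well} is promoted to a triple product through $\mcp(\cdot)$, and jumps of triple products do not factor through \eqref{eq:relation1} or \eqref{eq:relation2} individually. The ``triple-averaged'' form $\mcp(\bbu)\mcp(\bbh)\bbu$ in \eqref{eq:ec-flux} is chosen precisely so that repeated symmetrization via \eqref{eq:commute} realigns the terms and closes the identity.

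\emph{Second-order truncation error.} For smooth $\hU$, Taylor-expand about $x_{i+1/2}$: each symmetric interfacial average ($\bbh_{i+1/2}$, $\bbu_{i+1/2}$, $\overline{\mcp(\bh)\bh}_{i+1/2}$) agrees with its pointwise value at $x_{i+1/2}$ to $\mathcal{O}(\Delta x^2)$, and $\tfrac{1}{\Delta x}\dbracket{\bs{B}}_{i\pm 1/2}$ approximates $\hB_x(x_{i\pm 1/2},t)$ to $\mathcal{O}(\Delta x^2)$. Substituting into \eqref{eq:ec-flux} and \eqref{eq:ec-source} and applying a standard midpoint-rule consistency argument yields $\mathcal{F}^{EC}_{i+1/2}=\hF(\hU(x_{i+1/2},t))+\mathcal{O}(\Delta x^2)$ and analogous consistency for the source, giving the stated $\mathcal{O}(\Delta x^2)$ local truncation error.
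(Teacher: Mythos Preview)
Your proposal is correct and follows essentially the same approach as the paper: the well-balanced argument via \eqref{eq:relation1} and $\dbracket{\bh}=-\dbracket{\bs B}$, the EC property via Tadmor's framework reducing to the interfacial identity $\dbracket{\bV}^\top\mathcal F^{EC}=\dbracket{\bs\Psi}+g\dbracket{\bs B}^\top\mcp(\bbh)\bbu$ (verified using \eqref{eq:relation1}, \eqref{eq:relation2}, \eqref{eq:commute}), and the $\mathcal O(\Delta x^2)$ truncation error via symmetric-average Taylor expansions, all mirror the paper's Lemmas~\ref{lem:ec-well-balanced}--\ref{lem:e-conserved}. One minor imprecision: in your block-by-block verification, the $g\dbracket{\bh}$ component pairs with the \emph{mass} flux $\mcp(\bbh)\bbu$ (not with $\tfrac g2\overline{\mcp(\bh)\bh}$), after which \eqref{eq:relation1} converts it to $\tfrac g2\dbracket{\mcp(\bh)\bh}^\top\bbu$ and \eqref{eq:relation2} then combines it with $\tfrac g2\dbracket{\bu}^\top\overline{\mcp(\bh)\bh}$ to yield $\dbracket{\bs\Psi}$---but this does not affect the validity of your plan.
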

The remainder of this section is devoted to the proof, which requires some intermediate steps. First, we show that $\bs{S}_i$ is a well-balanced choice for the source term discretization.
\begin{lemma}\label{lem:ec-well-balanced}
  Suppose $\bs{S}_i$ is chosen as in \eqref{eq:ec-source}. If the bottom topography $B$ is independent of time, then \eqref{eq:fvsemi-discrete} is a well-balanced scheme in the sense of \cref{def:stochastic-lake-at-rest-1d}.
\end{lemma}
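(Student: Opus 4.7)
The plan is to plug the stochastic lake-at-rest state \eqref{eq:lake-at-rest-vec} directly into the right-hand side of \eqref{eq:fvsemi-discrete} with the flux \eqref{eq:ec-flux} and source \eqref{eq:ec-source} and verify that it vanishes identically. Under the hypothesis $\bs{q}_i \equiv \boldsymbol{0}$ we have $\bs{u}_i = \mcp(\bs{h}_i)^{-1}\bs{q}_i = \boldsymbol{0}$, and hence the arithmetic average $\bbu_{i+1/2} = \boldsymbol{0}$ at every interface. This immediately kills the first component of $\mathcal{F}^{EC}_{i\pm 1/2}$ as well as the kinetic term $\mcp(\bbu_{i+1/2})\mcp(\bbh_{i+1/2})\bbu_{i+1/2}$ in the second component. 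The only surviving contribution to the flux is the gravity piece $\tfrac{g}{2}\overline{\mcp(\bh)\bh}_{i+1/2}$.

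The key algebraic step is to convert the flux difference of this gravity piece into something that involves jumps of $\bs{h}$ (and hence, via lake-at-rest, jumps of $\bs{B}$). By the definition of the arithmetic average, the flux difference in the momentum component reduces to $\tfrac{g}{4}\bigl(\mcp(\bs{h}_{i+1})\bs{h}_{i+1} - \mcp(\bs{h}_{i-1})\bs{h}_{i-1}\bigr)$, which I split by telescoping through $\mcp(\bs{h}_i)\bs{h}_i$ into $\tfrac{g}{4}\bigl(\dbracket{\mcp(\bh)\bh}_{i+1/2} + \dbracket{\mcp(\bh)\bh}_{i-1/2}\bigr)$. Applying the identity \eqref{eq:relation1} to each jump then gives
\begin{equation*}
\mathcal{F}^{EC}_{i+1/2} - \mathcal{F}^{EC}_{i-1/2} = \begin{pmatrix} \boldsymbol{0} \\ \tfrac{g}{2}\bigl(\mcp(\bbh_{i+1/2})\dbracket{\bs{h}}_{i+1/2} + \mcp(\bbh_{i-1/2})\dbracket{\bs{h}}_{i-1/2}\bigr) \end{pmatrix}.
\end{equation*}

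Now I invoke the lake-at-rest condition in its differenced form: since $\bs{h}_j + \bs{B}_j \equiv \widehat{C}$ is independent of $j$, $\dbracket{\bs{h}}_{j+1/2} = -\dbracket{\bs{B}}_{j+1/2}$ at every interface. Substituting this into the expression above turns the flux difference into $-\tfrac{g}{2}\bigl(\mcp(\bbh_{i+1/2})\dbracket{\bs{B}}_{i+1/2} + \mcp(\bbh_{i-1/2})\dbracket{\bs{B}}_{i-1/2}\bigr)$ in the momentum slot. Dividing by $-\Delta x$ and adding the source term from \eqref{eq:ec-source}, the two contributions cancel exactly, giving $\tfrac{d}{dt}\bs{U}_i = \boldsymbol{0}$. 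Since $B$ is time-independent, this steady state persists, completing the proof.

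The main obstacle is purely notational: one must carefully track the matrix-valued interfacial averages and correctly apply \eqref{eq:relation1} to relate the telescoped jump $\dbracket{\mcp(\bh)\bh}$ to $\mcp(\bbh)\dbracket{\bh}$. There is no genuine analytic difficulty; the whole statement follows once the flux expression is rewritten so that the gravity term precisely mirrors the $\mcp(\bbh)\dbracket{\bs{B}}$ structure hard-coded into the source term $\bs{S}_i$. This also clarifies why \eqref{eq:ec-source} has the particular half-sum form it does---it is engineered precisely so this cancellation occurs at the discrete level.
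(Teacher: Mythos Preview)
Your proof is correct and follows essentially the same approach as the paper: both use $\bs{u}_i=\boldsymbol{0}$ to eliminate the kinetic terms and the first flux component, then rewrite the gravity-piece flux difference via the identity \eqref{eq:relation1} as $\tfrac{g}{2}\bigl(\mcp(\bbh_{i+1/2})\dbracket{\bh}_{i+1/2}+\mcp(\bbh_{i-1/2})\dbracket{\bh}_{i-1/2}\bigr)$ (the paper records this as an intermediate identity), and finally exploit $\dbracket{\bh+\bs{B}}=\boldsymbol{0}$ to cancel against the source. The only cosmetic difference is that the paper combines into a single $\dbracket{\bh+\bs{B}}$ jump while you substitute $\dbracket{\bh}=-\dbracket{\bs{B}}$ and cancel; these are equivalent.
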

\begin{proof}
Given initial data 
\begin{align}\label{eq:wbinit}
    &\bu_i \equiv \boldsymbol{0},&&\bh_i + \boldsymbol{B}_i = \text{const vector},\;\;\forall i,
\end{align}
the well-balanced property with time-independent bottom topography (see \cref{def:stochastic-lake-at-rest-1d}) requires that, for every $i$,
\begin{align}\label{eq:wb-goal}
  \frac{d}{dt}\boldsymbol{\bh}_i &\equiv 0, &\frac{d}{dt}\boldsymbol{\bq}_i &\equiv 0.
\end{align}
We first notice that, 
\begin{equation}\label{eq:relation3}
\begin{aligned}
    \left(\overline{\mcp(\bh)\bh}\right)_{i+\frac{1}{2}} - \left(\overline{\mcp(\bh)\bh}\right)_{i-\frac{1}{2}}
    &=\frac{1}{2}\left(\dbracket{\mcp(\bh)\bh}_{i+\frac{1}{2}} + \dbracket{\mcp(\bh)\bh}_{i-\frac{1}{2}}\right) \\
    &\overset{\eqref{eq:relation1}}{=}\mcp(\bbh_{i+\frac{1}{2}})\dbracket{\bh}_{i+\frac{1}{2}} + \mcp(\bbh_{i-\frac{1}{2}})\dbracket{\bh}_{i-\frac{1}{2}}.
\end{aligned}
\end{equation}
  Note that with initialization \eqref{eq:wbinit}, then $\bs{u}_i = 0$, and hence $\bbu_{i+1/2} = 0$.
  %Assume that the solution is well-balanced at current time level, i.e.,
%equation \eqref{eq:wbinit} is satisfied.  Then
%$\bu_{i+\frac{1}{2}}\equiv 0$ (due to a second-order
  %piecewise linear reconstruction procedure)\footnote{TODO: We haven't defined reconstruction procedures.}, 
  Therefore the semi-discrete scheme \eqref{eq:fvsemi-discrete} with the flux and source terms in \eqref{eq:sgswe-ec} yields,
\begin{align}
    \frac{d}{dt}\bh_i = &-\frac{1}{\Delta x}\left(\mcp(\bbh_{i+\frac{1}{2}})\bbu_{i+\frac{1}{2}} - \mcp(\bbh_{i-\frac{1}{2}})\bbu_{i-\frac{1}{2}}\right) = \boldsymbol{0},\label{eq:wbh}\\
    \frac{d}{dt}\bq_i = &-\frac{g}{2\Delta x}\left(\left(\overline{\mcp(\bh)\bh}\right)_{i+\frac{1}{2}} - \left(\overline{\mcp(\bh)\bh}\right)_{i-\frac{1}{2}}\right)\nonumber\\
    &-\frac{g}{2\Delta x}\left(\mcp(\bbh_{i+\frac{1}{2}})\dbracket{\boldsymbol{B}}_{i+\frac{1}{2}}+\mcp(\bbh_{i-\frac{1}{2}})\dbracket{\boldsymbol{B}}_{i-\frac{1}{2}}\right)\nonumber\\\nonumber
    \overset{\eqref{eq:relation3}}{=}&-\frac{g}{2\Delta x}\left(\mcp(\bbh_{i+\frac{1}{2}})\dbracket{\bh+\boldsymbol{B}}_{i+\frac{1}{2}}+\mcp(\bbh_{i-\frac{1}{2}})\dbracket{\bh+\boldsymbol{B}}_{i-\frac{1}{2}}\right) \stackrel{\eqref{eq:wbinit}}{=} \bs{0},%\label{eq:wbq}.
\end{align}
which establishes \eqref{eq:wb-goal}.
\end{proof}

\begin{lemma}\label{lem:ec-lte-2}
  The flux and source terms in \eqref{eq:sgswe-ec} commit a local truncation error of $\mathcal{O}(\Delta x^2)$.
\end{lemma}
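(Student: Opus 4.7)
The plan is to show that each entry of $\mathcal{F}^{\mathrm{EC}}_{i+\frac{1}{2}}$ differs from $\hF(\hU(x_{i+\frac{1}{2}}))$ by $\mathcal{O}(\Delta x^2)$, and that $\boldsymbol{S}_i$ differs from the exact source $\hS(\hU(x_i),\hB(x_i))$ by the same order. The workhorse is a standard Taylor-symmetry observation that I would record as a brief preliminary: for any smooth field $\widehat{a}(x)$, the cell averages $\boldsymbol{a}_i = \widehat{a}(x_i) + \mathcal{O}(\Delta x^2)$ (midpoint rule) satisfy
\[
  \overline{\boldsymbol{a}}_{i+\frac{1}{2}} = \widehat{a}(x_{i+\frac{1}{2}}) + \mathcal{O}(\Delta x^2), \qquad \frac{\dbracket{\boldsymbol{a}}_{i+\frac{1}{2}}}{\Delta x} = \widehat{a}_x(x_{i+\frac{1}{2}}) + \mathcal{O}(\Delta x^2),
\]
because the odd-order Taylor terms cancel about the midpoint $x_{i+\frac{1}{2}}$.

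Granting this, the first block of the flux satisfies $\mcp(\bbh_{i+\frac{1}{2}})\bbu_{i+\frac{1}{2}} = \mcp(\hh)\hu + \mathcal{O}(\Delta x^2) = \hq(x_{i+\frac{1}{2}}) + \mathcal{O}(\Delta x^2)$, using linearity of $\mcp$ together with $\bs{u}_i = \hu(x_i) + \mathcal{O}(\Delta x^2)$, which itself follows from smoothness of the map $(\hh,\hq) \mapsto \mcp^{-1}(\hh)\hq$ wherever $\mcp(\hh) > 0$. The same reasoning on the second block gives $\mcp(\bbu_{i+\frac{1}{2}})\mcp(\bbh_{i+\frac{1}{2}})\bbu_{i+\frac{1}{2}} = \mcp(\hu)\mcp(\hh)\hu + \mathcal{O}(\Delta x^2)$ at $x_{i+\frac{1}{2}}$, which coincides with $\mcp(\hq)\mcp^{-1}(\hh)\hq$ via $\mcp(\hh)\hu = \hq$ and the commutativity relation \eqref{eq:commute}. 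The remaining term $(\overline{\mcp(\bh)\bh})_{i+\frac{1}{2}}$ is the arithmetic average of the smooth field $x \mapsto \mcp(\hh(x))\hh(x)$ at $x_i$ and $x_{i+1}$, so equals $\mcp(\hh)\hh$ at $x_{i+\frac{1}{2}}$ up to $\mathcal{O}(\Delta x^2)$. Combining these identifications reproduces $\hF(\hU(x_{i+\frac{1}{2}}))$ to the claimed order.

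For the source term, the preliminary estimate gives $\mcp(\bbh_{i\pm\frac{1}{2}})\dbracket{\boldsymbol{B}}_{i\pm\frac{1}{2}}/\Delta x = \mcp(\hh)\hB_x\big|_{x_{i\pm\frac{1}{2}}} + \mathcal{O}(\Delta x^2)$. The momentum component of $\boldsymbol{S}_i$ is $-\tfrac{g}{2}$ times the sum of these two quantities; their arithmetic mean is again a symmetric combination about $x_i$ of evaluations of a smooth function, so equals the value at $x_i$ up to $\mathcal{O}(\Delta x^2)$, yielding $\boldsymbol{S}_i = \hS(\hU(x_i),\hB(x_i)) + \mathcal{O}(\Delta x^2)$. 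The only real technical point is bookkeeping in the quadratic and cubic products of averages (notably $\mcp(\bbu)\mcp(\bbh)\bbu$): each factor must carry an $\mathcal{O}(\Delta x^2)$ — not $\mathcal{O}(\Delta x)$ — perturbation so that no first-order terms leak into the product. This is automatic once the symmetric-midpoint cancellation is established, reducing the entire argument to a routine Taylor-expansion check.
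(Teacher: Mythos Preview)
Your proof is correct and follows precisely the approach the paper takes: assume smooth $(\hU,\hB)$, use the symmetric midpoint Taylor expansions $\overline{\bs{a}}_{i+\frac{1}{2}}=\widehat{a}(x_{i+\frac{1}{2}})+\mathcal{O}(\Delta x^2)$ and $\dbracket{\bs{a}}_{i+\frac{1}{2}}=\Delta x\,\widehat{a}_x(x_{i+\frac{1}{2}})+\mathcal{O}(\Delta x^2)$, then match each block of $\mathcal{F}^{\mathrm{EC}}_{i+\frac{1}{2}}$ and $\bs{S}_i$ to the corresponding block of $\hF$ and $\hS$. The paper in fact omits most of the details you supply (it only lists the key expansion identities and notes the dependence of the implied constants on the singular values of $\mcp(\hh)$ and $\mcp(\hh_{xx})$), so your write-up is if anything more complete.
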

The proof is direct, by assuming $(\bs{U}_i, \bs{B}_i)$ are exact cell averages of spatially smooth functions $(\hU, \hB)$ and then comparing $\mathcal{F}_{i+1/2}$ and $\bs{S}_i$ to $\hF(\hU)\big|_{x=x_{i+1/2}}$ and $\hS(\hU)\big|_{x=x_{i}}$, respectively, where $\hF$ and $\hS$ are the exact flux and source functions in \eqref{eq:sgfluxessource1d}. Therefore we omit most details, pointing out only the following quantitative approximations in space (ignoring the time variable $t$):
\begin{align*}
  \overline{\bs{U}}_{i+1/2} &= \hU(x_{i+1/2}) + \mathcal{O}(\Delta x^2), & 
  \dbracket{\bs{U}}_{i+1/2} &= \Delta x\, \hU_x\left(x_{i+1/2}\right) + \mathcal{O}(\Delta x^2) \\ 
  \mcp(\overline{\bs{h}}_{i+1/2}) &= \mcp(\hh(x_{i+1/2})) + \mathcal{O}(\Delta x^2), & 
  \overline{\bs{u}}_{i+1/2} &= \hu(x_{i+1/2}) + \mathcal{O}(\Delta x^2).
\end{align*}
Note that the implicit constants hidden in the asymptotic notation above depend on the maximum singular value of $\mcp(\hh_{xx}(x))$ and the minimum singular value of $\mcp(\hh(x_{i+1/2}))$.

The final result we need is a sufficient condition for a numerical flux to result in an EC scheme.
\begin{lemma}\label{lem:e-conserved}
  Let $\bs{S}_i$ be chosen as in \eqref{eq:ec-source}. Suppose that $\mathcal{F}_{i+1/2}$ satisfies
\begin{align}\label{eq:e-conserved-cond}
  \dbracket{\bV}_{i+\frac{1}{2}}^\top \mathcal{F}_{i+\frac{1}{2}} = \dbracket{\bs{\Psi}}_{i+\frac{1}{2}} + g\dbracket{\boldsymbol{B}}_{i+\frac{1}{2}}^\top\mcp(\bbh_{i+\frac{1}{2}})\bbu_{i+\frac{1}{2}}.
\end{align}
%where $\langle\cdot,\rangle$ denotes the dot product.
%  , and the source term is selected to be 
%\begin{equation}\label{eq:e-conserved-source}
%    \boldsymbol{S}_{i} = \begin{pmatrix}
%    \boldsymbol{0}\vspace{0.5em}\\
%    -\frac{g}{2\Delta x}\left(\mcp(\bbh_{i+\frac{1}{2}})\dbracket{\boldsymbol{B}}_{i+\frac{1}{2}}+\mcp(\bbh_{i-\frac{1}{2}})\dbracket{\boldsymbol{B}}_{i-\frac{1}{2}}\right).
%    \end{pmatrix}
%\end{equation}
  Then the corresponding FV scheme \eqref{eq:fvsemi-discrete} is an EC scheme, i.e., satisfies \eqref{eq:ec-scheme-def}, where the numerical energy flux is given by,
%\begin{equation}
%    \frac{d}{dt}E_{i} = -\frac{1}{\Delta x}\left( \mathcal{H}_{i+\frac{1}{2}} - \mathcal{H}_{i-\frac{1}{2}}\right),
%\end{equation}
%where the numerical energy flux is given by 
\begin{align}\label{eq:e-flux}
  \mathcal{H}_{i+\frac{1}{2}} &\coloneqq \overline{\bV}_{i+\frac{1}{2}}^\top \mathcal{F}_{i+\frac{1}{2}} - \overline{\bs{\Psi}}_{i+\frac{1}{2}} - \frac{g}{4}\dbracket{\boldsymbol{B}}_{i+\frac{1}{2}}^\top\mcp(\bbh_{i+\frac{1}{2}})\dbracket{\bu}_{i+\frac{1}{2}}.
\end{align}
%and assume the discrete periodic flux boundary conditions,
%$\mathcal{H}_{M+\frac{1}{2}}=\mathcal{H}_{-M-\frac{1}{2}}$. 
%Then, the discrete total energy is preserved, i.e., 
%\begin{align}
%    \sum_{i}E_{i}(t)\Delta x \equiv \sum_{i}E_{i}(0)\Delta x.
%\end{align}
\end{lemma}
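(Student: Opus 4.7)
The plan is to derive the semi-discrete identity \eqref{eq:ec-scheme-def} by multiplying the scheme \eqref{eq:fvsemi-discrete} on the left by the discrete entropy variable $\bs{V}_i^\top$, using the fact that $\frac{d}{dt}\bs{E}_i = \bs{V}_i^\top \frac{d}{dt}\bs{U}_i$ by the chain rule and the definition \eqref{eq:Vi-def}. This produces
\begin{align*}
\Delta x\,\frac{d}{dt}\bs{E}_i \;=\; -\bs{V}_i^\top\bigl(\mathcal{F}_{i+1/2}-\mathcal{F}_{i-1/2}\bigr) \;+\; \Delta x\,\bs{V}_i^\top \bs{S}_i,
\end{align*}
and the goal is to recognize the right-hand side as $-(\mathcal{H}_{i+1/2}-\mathcal{H}_{i-1/2})$ with $\mathcal{H}$ defined by \eqref{eq:e-flux}.

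The main algebraic maneuver is the ``symmetric flux split'' used in \cite{fjordholm2011well}: apply \eqref{eq:jump-average-property} to write $\bs{V}_i = \overline{\bs{V}}_{i+1/2} - \tfrac{1}{2}\dbracket{\bs{V}}_{i+1/2}$ when contracting against $\mathcal{F}_{i+1/2}$ and $\bs{V}_i = \overline{\bs{V}}_{i-1/2} + \tfrac{1}{2}\dbracket{\bs{V}}_{i-1/2}$ when contracting against $\mathcal{F}_{i-1/2}$. This produces four terms: two ``average'' contributions $\pm \overline{\bs{V}}_{i\pm 1/2}^\top \mathcal{F}_{i\pm 1/2}$ and two ``jump'' contributions $\tfrac{1}{2}\dbracket{\bs{V}}_{i\pm 1/2}^\top \mathcal{F}_{i\pm 1/2}$. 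The hypothesis \eqref{eq:e-conserved-cond} rewrites each jump contribution as $\tfrac{1}{2}\dbracket{\bs{\Psi}}_{i\pm 1/2}$ plus a bottom-topography correction of the form $\tfrac{g}{2}\dbracket{\bs{B}}_{i\pm 1/2}^\top \mcp(\bbh_{i\pm 1/2})\bbu_{i\pm 1/2}$. The $\bs{\Psi}$-jumps then telescope via \eqref{eq:jump-average-property}, namely $\tfrac{1}{2}(\dbracket{\bs{\Psi}}_{i+1/2}+\dbracket{\bs{\Psi}}_{i-1/2}) = \overline{\bs{\Psi}}_{i+1/2}-\overline{\bs{\Psi}}_{i-1/2}$, producing exactly the first two terms of the numerical entropy flux \eqref{eq:e-flux}.

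The remaining task, which I expect to be the most delicate step, is to show that the source-term contribution $\Delta x\,\bs{V}_i^\top\bs{S}_i$ combines with the leftover bottom-topography corrections to yield precisely the last term in \eqref{eq:e-flux}. Since only the lower block of $\bs{V}_i$, namely $\bs{u}_i^\top$, couples with the nontrivial block of $\bs{S}_i$ from \eqref{eq:ec-source}, this contribution equals $-\tfrac{g}{2}\bs{u}_i^\top\bigl(\mcp(\bbh_{i+1/2})\dbracket{\bs{B}}_{i+1/2} + \mcp(\bbh_{i-1/2})\dbracket{\bs{B}}_{i-1/2}\bigr)$. I will again apply the symmetric split \eqref{eq:jump-average-property} to $\bs{u}_i$ at each interface and invoke symmetry of $\mcp(\bbh_{i\pm 1/2})$ (so that $\bs{u}^\top\mcp(\bbh)\dbracket{\bs{B}} = \dbracket{\bs{B}}^\top\mcp(\bbh)\bs{u}$). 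The ``average'' part $\bbu_{i\pm 1/2}$ cancels exactly the bottom correction arising from the EC condition on the flux, while the ``jump'' part $\dbracket{\bu}_{i\pm 1/2}$ assembles into $\mp\tfrac{g}{4}\dbracket{\bs{B}}_{i\pm 1/2}^\top \mcp(\bbh_{i\pm 1/2})\dbracket{\bs{u}}_{i\pm 1/2}$, which is precisely the remaining term in $\mathcal{H}_{i\pm 1/2}$. Collecting everything yields \eqref{eq:ec-scheme-def} with $\mathcal{H}$ given by \eqref{eq:e-flux}, completing the proof. The principal obstacle is the sign and symmetry bookkeeping in this last step, where the cancellation of the ``average'' piece against the source correction relies essentially on the symmetry of the operator $\mcp(\cdot)$.
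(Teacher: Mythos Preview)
Your proposal is correct and follows essentially the same approach as the paper's proof: multiply \eqref{eq:fvsemi-discrete} by $\bs{V}_i^\top$, use the split \eqref{eq:jump-average-property} on $\bs{V}_i$ at each interface, invoke the hypothesis \eqref{eq:e-conserved-cond} on the jump contributions, and cancel the bottom-topography corrections against $\bs{V}_i^\top\bs{S}_i$ using symmetry of $\mcp(\bbh)$ together with a second application of \eqref{eq:jump-average-property} to $\bs{u}_i$. The only cosmetic difference is that the paper recombines $\bbu_{i\pm1/2}\mp\tfrac{1}{2}\dbracket{\bu}_{i\pm1/2}$ back into $\bs{u}_i$ on the flux side and then cancels against the raw source term, whereas you perform the split on the source side; both routes use the same identities in the same way.
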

\begin{proof}
  Multiplying \eqref{eq:fvsemi-discrete} by $\bV_{i}^\top$ and using the definition of $\bs{V}_i$ in \eqref{eq:Vi-def}, we obtain,
  \begin{align}\label{eq:ec-temp-comp}
    \frac{d}{dt}\bs{E}_{i} = &-\frac{1}{\Delta x}\big(\underbrace{\bV_{i}^\top \mathcal{F}_{i+\frac{1}{2}}}_{(A1)}-\underbrace{\bV_{i}^\top \mathcal{F}_{i-\frac{1}{2}}}_{(A2)} - \underbrace{\Delta x\bV_{i}^\top \overline{\boldsymbol{S}}_{i}}_{(B)}\big)
%            + \langle \bV_{i}, \overline{\boldsymbol{S}}_{i}\rangle
  \end{align}
  The first term, labeled (A1), can be expanded to,
  \begin{align*}
    \mathrm{(A1)} &\stackrel{\eqref{eq:jump-average-property}}{=} \overline{\bs{V}}_{i+1/2}^\top \mathcal{F}_{i+1/2} - \frac{1}{2} \dbracket{\bs{V}}_{i+1/2}^\top \mathcal{F}_{i+1/2} \\
    &\stackrel{\eqref{eq:e-conserved-cond}\eqref{eq:e-flux}}{=}
    \mathcal{H}_{i+\frac{1}{2}} + \overline{\bs{\Psi}}_{i+\frac{1}{2}} + \frac{g}{4}\dbracket{\boldsymbol{B}}_{i+\frac{1}{2}}^\top\mcp(\bbh_{i+\frac{1}{2}})\dbracket{\bu}_{i+\frac{1}{2}}-\frac{1}{2}\dbracket{\bs{\Psi}}_{i+\frac{1}{2}} - \frac{g}{2}\dbracket{\boldsymbol{B}}_{i+\frac{1}{2}}^\top\mcp(\bbh_{i+\frac{1}{2}})\bbu_{i+\frac{1}{2}} \\
    &\stackrel{\eqref{eq:jump-average-property}}{=} \mathcal{H}_{i+\frac{1}{2}} + \bs{\Psi}_i - \frac{g}{2}\dbracket{\boldsymbol{B}}_{i+\frac{1}{2}}^\top\mcp(\bbh_{i+\frac{1}{2}}) \bs{u}_i
  \end{align*}
  In an analogous computation, the term labeled (A2) is given by,
  \begin{align*}
    \mathrm{(A2)}
 = \mathcal{H}_{i-\frac{1}{2}} + \bs{\Psi}_i + \frac{g}{2}\dbracket{\boldsymbol{B}}_{i-\frac{1}{2}}^\top\mcp(\bbh_{i-\frac{1}{2}}) \bs{u}_i
  \end{align*}
  Finally, a direct computation shows that term (B) is,
  \begin{align*}
    \mathrm{(B)} \stackrel{\eqref{eq:ec-source},\eqref{eq:Vi-def}} = -\frac{g}{2} \bs{u}_i^\top \mcp(\bbh_{i+\frac{1}{2}}) \dbracket{\boldsymbol{B}}_{i+\frac{1}{2}} - \frac{g}{2} \bs{u}_i^\top \mcp(\bbh_{i-\frac{1}{2}}) \dbracket{\boldsymbol{B}}_{i-\frac{1}{2}} 
  \end{align*}
  Using the expressions for terms (A1), (A2), and (B) derived above in \eqref{eq:ec-temp-comp} establishes that the scheme satisfies \eqref{eq:ec-scheme-def}, i.e., is an EC scheme.
\end{proof}

We now have all the ingredients necessary to prove \cref{thm:ec-scheme}.
\begin{proof}[Proof of \cref{thm:ec-scheme}]
  Lemmas \ref{lem:ec-well-balanced} and \ref{lem:ec-lte-2} verify that the scheme is well-balanced and second-order. We therefore need only show that it is EC. To do this, we must verify the condition in \cref{lem:e-conserved}.
  We accomplish this with direct computation:
  \begin{align*}
    \dbracket{\bV}_{i+\frac{1}{2}}^\top \mathcal{F}^{\text{EC}}_{i+\frac{1}{2}} & \stackrel{\eqref{eq:Vi-def}, \eqref{eq:ec-flux}}{=} \left(g\left(\dbracket{\bh}_{i+\frac{1}{2}}+\dbracket{\boldsymbol{B}}_{i+\frac{1}{2}}\right) - \frac{1}{2}\dbracket{\mcp(\bu)\bu}_{i+\frac{1}{2}}\right)^\top\mcp(\bbh_{i+\frac{1}{2}})\bbu_{i+\frac{1}{2}}\\
        &\;\;\; +\dbracket{\bu}_{i+\frac{1}{2}}^\top\left(\frac{g}{2}\left(\overline{\mcp(\bh)\bh}\right)_{i+\frac{1}{2}} + \mcp(\bbu_{i+\frac{1}{2}})\mcp(\bbh_{i+\frac{1}{2}})\bbu_{i+\frac{1}{2}}\right)\\
        &\stackrel{\eqref{eq:relation1}}{=} g\left( \dbracket{\bh}_{i+\frac{1}{2}} + \dbracket{\bs{B}}_{i+1/2}\right)^\top\mcp(\bbh_{i+\frac{1}{2}})\bbu_{i+\frac{1}{2}}+\frac{g}{2}\dbracket{\bu}_{i+\frac{1}{2}}^\top\left(\overline{\mcp(\bh)\bh}\right)_{i+\frac{1}{2}}\\
%        &+g\dbracket{\boldsymbol{B}}_{i+\frac{1}{2}}^\top\mcp(\bbh_{i+\frac{1}{2}})\bbu_{i+\frac{1}{2}}\\
        &\stackrel{\eqref{eq:relation1}}{=} \frac{g}{2}\dbracket{\mcp(\bh)\bh}_{i+\frac{1}{2}}^\top\overline{\bu}_{i+\frac{1}{2}}
        +g\dbracket{\boldsymbol{B}}_{i+\frac{1}{2}}^\top\mcp(\bbh_{i+\frac{1}{2}})\bbu_{i+\frac{1}{2}}
        +\frac{g}{2}\dbracket{\bu}_{i+\frac{1}{2}}^\top\left(\overline{\mcp(\bh)\bh}\right)_{i+\frac{1}{2}}\\
%        &+g\dbracket{\boldsymbol{B}}_{i+\frac{1}{2}}^\top\mcp(\bbh_{i+\frac{1}{2}})\bbu_{i+\frac{1}{2}}\\
        &\overset{\eqref{eq:relation2}}{=}\frac{g}{2}\dbracket{\bu^\top\mcp(\bh)\bh}_{i+\frac{1}{2}}+ g\dbracket{\boldsymbol{B}}_{i+\frac{1}{2}}^\top\mcp(\bbh_{i+\frac{1}{2}})\bbu_{i+\frac{1}{2}}\\
        &=\dbracket{\bs{\Psi}}_{i+\frac{1}{2}}+g\dbracket{\boldsymbol{B}}_{i+\frac{1}{2}}^\top\mcp(\bbh_{i+\frac{1}{2}})\bbu_{i+\frac{1}{2}},
  \end{align*}
  which verifies \eqref{eq:e-conserved-cond}, and hence \cref{lem:e-conserved} is applicable, showing that this is an EC scheme.
\end{proof}

\subsection{A first-order ES scheme}
%\par  (\textcolor{red} {Continue below, April 5 2023})\\  
The scheme determined by \eqref{eq:sgswe-ec} 
%e-conserved-source} and \eqref{eq:e-conserved-flux} can 
numerically preserves the energy of the PDE system \eqref{eq:swesg1-1d}. However, it may lead to spurious oscillations since the energy should dissipate in the presence of shocks. The issue can be resolved by introducing appropriate numerical viscosity \cite{tadmor1987numerical, tadmor2003entropy, fjordholm_mishra_tadmor_2009, fjordholm2011well, fjordholm2012arbitrarily}. Our numerical diffusion operators are a straightforward stochastic extension of the energy-stable diffusion operators proposed in \cite{fjordholm_mishra_tadmor_2009,fjordholm2011well}. 

For context of the approach, the introduction of a traditional Roe-type diffusion for a conservation law involves augmenting an EC flux as
follows:
\begin{align*}
  \mathcal{F}_{i+1/2}^{\mathrm{RD}} \coloneqq \mathcal{F}_{i+1/2}^{EC} - \frac{1}{2} \bs{Q}^{Roe}_{i+1/2} \dbracket{\bs{U}}_{i+1/2},
\end{align*}
where $\bs{Q}^{Roe}$ is a positive semi-definite matrix defined through a diagonalization of the interfacial flux Jacobian at a Roe-averaged state:
\begin{align}\label{eq:roe-diffusion}
  \boldsymbol{Q}^{\text{Roe}}_{i+\frac{1}{2}} &\coloneqq \boldsymbol{T}^{\text{Roe}} \vert\bs{\Lambda}^{\text{Roe}}\vert\left(\boldsymbol{T}^{\text{Roe}}\right)^{-1}, &
  \frac{\partial \widehat{F}}{\partial \hU}(\overline{\bU}_{i+1/2}) &=\boldsymbol{T}^{\text{Roe}}\bs{\Lambda}^{\text{Roe}}\left(\boldsymbol{T}^{\text{Roe}}\right)^{-1}.
\end{align}
Then the semi-discrete scheme \eqref{eq:fvsemi-discrete} using the numerical flux $\mathcal{F}_{i+1/2} = \mathcal{F}^{\mathrm{RD}}_{i+1/2}$ would behave like,
\begin{align*}
  \frac{d}{dt} \bs{U}_i(t) &= -\frac{1}{\Delta x} \left(
                             \mathcal{F}^{EC}_{i+1/2} -
                             \mathcal{F}^{EC}_{i-1/2} \right) +
                             \frac{1}{2\Delta x} \left( \bs{Q}^{Roe}_{i+1/2} \dbracket{\bs{U}}_{i+1/2} - \bs{Q}^{Roe}_{i-1/2} \dbracket{\bs{U}}_{i-1/2} \right) +\boldsymbol{S}_{i}\\ 
  &\approx -\frac{1}{\Delta x} \left( \hF(\hU)\big|_{x = x_{i+1/2}} -
    \hF(\hU)\big|_{x = x_{i-1/2}}  \right) + \Delta x\,\bs{Q} \hU_{xx}\big|_{x=x_i}+S(\hU) \big|_{x=x_i},
\end{align*}
where $\bs{Q}$ is a positive-definite matrix, and hence this introduces diffusion into an EC scheme. While the above approach works in terms of adding a diffusion-like term, a convenient way to ensure energy stability is to employ a numerical diffusion term that operates on the entropic variables $\bs{V}$ instead of the conservative variables $\bs{U}$:
\begin{align}\label{eq:FES}
  \mathcal{F}^{ES}_{i+\frac{1}{2}}&\coloneqq \mathcal{F}^{EC}_{i+\frac{1}{2}} - \frac{1}{2}\boldsymbol{Q}_{i+\frac{1}{2}}^{ES}\dbracket{\bV}_{i+\frac{1}{2}},  
\end{align}
where $\bs{Q}_{i+\frac{1}{2}}^{ES}$ is a positive definite matrix that will be identified in a Roe-type way from the 
%which is the image of a diffusion operator $\mathcal{Q}^{ES}$
%applied to the 
two adjacent states $\bs{U}_i$ and $\bs{U}_{i+1}$ at the cell interface $x =
x_{i+\frac{1}{2}}$. The term $\bV_i$ is as given in \eqref{eq:Vi-def}, and is a second-order approximation to the cell-average of the entropy variable $\hV$. %(\ref{entrvar}) (for smooth solutions), namely,
%\begin{equation}\label{entrvarc1}
%  \bV:= \left(-\frac{1}{2}\bu^\top\mcp(\bu) +
%    g(\bh+\boldsymbol{B})^\top,\bu^\top\right)^\top.
%\end{equation}
We are interested in the Roe-type energy-stable operator defined as,
\begin{align}\label{eq:roe-diff-operator}
  \mathcal{Q}_{i+1/2}(\bs{U}_{i}, \bs{U}_{i+1}) \coloneqq %:(\bU_{l}, \bU_{r}) \coloneqq 
  \bs{T}_{}\vert\bs{\Lambda}_{}\vert\boldsymbol{T}_{}^{\top} \geq 0,
  %\to \boldsymbol{T}_{c}\vert\bs{\Lambda}_{c}\vert\boldsymbol{T}_{c}^{\top},
\end{align}
%where $\bU_{l}$, $\bU_{r}$ are the left- and the right- states at some cell interface, 
where the matrices $\boldsymbol{T}_{}$ and $\bs{\Lambda}_{}$ are matrices from the eigendecomposition of the flux Jacobian \eqref{eq:x-jacobian1d} evaluated at a Roe-type average state:
%satisfy the eigendecomposition of the Jacobian matrix at an average state $\bU_{c}$ determined by $\bU_{l}$ and  $\bU_{r}$:
\begin{align}\label{eq:roe-diffusion-1d}
  \frac{\partial \widehat{F}}{\partial \hU}(\widetilde{\bU}_{i+1/2}) =\boldsymbol{T}_{}\bs{\Lambda}_{}\boldsymbol{T}_{}^{-1},&&\widetilde{\bU}_{i+1/2} \coloneqq \begin{pmatrix}
    \overline{\bh}_{i+1/2}\\
    \mcp(\overline{\bh}_{i+1/2}) \overline{\bu}_{i+1/2}%\bu_{c}
    \end{pmatrix}.
\end{align}
Note in particular that $\overline{\bs{q}}_{i+1/2} \neq \mcp(\overline{\bh}_{i+1/2}) \overline{\bu}_{i+1/2}$, so that $\widetilde{\bs{U}}_{i+1/2} \neq \overline{\bs{U}}_{i+1/2}$.
%Here, 
%\begin{align*}
%    &\bh_c = \frac{1}{2}(\bh_l + \bh_r), &\bu_c = \frac{1}{2}(\bu_l+\bu_r),
%\end{align*}
%and 
%\begin{align*}
%    &\bU_{l} = \begin{pmatrix}
%    \bh_{l}\\
%    \mcp(\bh_{l})\bu_{l}
%    \end{pmatrix},&
%    \bU_{r} = \begin{pmatrix}
%    \bh_{r}\\
%    \mcp(\bh_{r})\bu_{r}
%    \end{pmatrix}.
%\end{align*}
%Note that $\bU_{c} \ne \frac{1}{2}(\bU_l+\bU_r)$ since we are taking the average of the PCE of the velocity $\bu$. 
%We will refer to the energy stable scheme introduced below as ES1, as we will show that it is first-order accurate.
%In what follows we refer to our first- and second-order energy stable schemes as ES1 and ES2, respectively.
%\subsubsection{First-Order Diffusion}
The focal scheme of this section uses the numerical flux \eqref{eq:FES}, where $\bs{Q}$ is given by the Roe-type diffusion matrix introduced above,
%matrix at $x = x_{i+\frac{1}{2}}$ is simply defined to be
\begin{align}\label{eq:entropic-roe-diffusion-1d}
    \bs{Q}^{ES1}_{i+\frac{1}{2}}\coloneqq \mathcal{Q}_{i+1/2}(\bU_{i}, \bU_{i+1}) = \bs{T}_{}\vert\bs{\Lambda}_{}\vert\boldsymbol{T}_{}^{\top},
\end{align}
where we refer to this scheme as ``ES1'' because we will show it is first-order accurate.  Our main result for this scheme is as follows.
\begin{theorem}[ES1 scheme]\label{thm:es1}
  Consider the finite volume scheme \eqref{eq:fvsemi-discrete} with source term \eqref{eq:ec-source} and diffusive numerical flux \eqref{eq:FES}, selecting the diffusion matrix as,
  \begin{align}\label{eq:ES-ES1}
    \bs{Q}_{i+1/2}^{ES} = \bs{Q}_{i+1/2}^{ES1},
  \end{align}
  The resulting scheme is a first-order, well-balanced ES scheme.
%    With $\mathcal{F}^{EC}_{i+\frac{1}{2}}$ given in \eqref{eq:e-conserved-flux} and $\boldsymbol{V}$ defined via \eqref{eq:Vi-def}, consider the flux,
%    \begin{align}\label{eq:e-stable-flux}
%        \mathcal{F}^{ES1}_{i+\frac{1}{2}} = \mathcal{F}^{EC}_{i+\frac{1}{2}} - \frac{1}{2}\boldsymbol{Q}_{i+\frac{1}{2}}^{ES1}\dbracket{\boldsymbol{V}}_{i+\frac{1}{2}}.
%    \end{align}
%    Then the corresponding ES1 scheme is
%    \begin{enumerate}[(i)]
%        \item first-order accurate, 
%        \item energy stable (energy dissipative), in the sense that 
%        \begin{align}
%            \frac{d}{dt}\sum_{i}E_i(t)\Delta x \le 0,
%        \end{align}
%        and,
%        \item well-balanced.
%    \end{enumerate}
\end{theorem}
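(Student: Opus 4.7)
The proof of \cref{thm:es1} naturally splits into three tasks: well-balancedness, first-order local truncation, and the energy stability inequality. My plan is to leverage \cref{thm:ec-scheme} as a black box and treat the ES1 scheme as a controlled perturbation of the EC scheme, where the perturbation is precisely $-\tfrac{1}{2}\bs{Q}^{ES1}_{i+1/2}\dbracket{\bV}_{i+1/2}$ in the flux.

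For well-balancedness, I would note that at a stochastic lake-at-rest state in the sense of \cref{def:stochastic-lake-at-rest-1d} one has $\bs{u}_i \equiv \bs{0}$ and $\bs{h}_i + \bs{B}_i \equiv \widehat{C}$, so the definition \eqref{eq:Vi-def} of $\bs{V}_i$ reduces to $\bV_i = (g(\bh_i+\bs{B}_i)^\top,\,\bs{0}^\top)^\top$, which is spatially constant. Hence $\dbracket{\bV}_{i\pm 1/2} = \bs{0}$ and $\mathcal{F}^{ES}_{i\pm1/2} = \mathcal{F}^{EC}_{i\pm1/2}$, so the ES1 scheme coincides with the EC scheme on such data, and \cref{lem:ec-well-balanced} applies.

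For the first-order consistency, \cref{lem:ec-lte-2} already gives $\mathcal{F}^{EC}_{i+1/2} = \widehat{F}(\hU)|_{x_{i+1/2}} + \mathcal{O}(\Delta x^2)$. For smooth data, $\dbracket{\bV}_{i+1/2} = \Delta x\,\hV_x(x_{i+1/2}) + \mathcal{O}(\Delta x^2)$, and $\bs{Q}^{ES1}_{i+1/2}$ is bounded (it is built from an eigendecomposition of the flux Jacobian at a Roe-averaged state, well defined by \cref{thm:hyperbolicity}). Thus $\mathcal{F}^{ES}_{i+1/2} = \widehat{F}(\hU)|_{x_{i+1/2}} + \mathcal{O}(\Delta x)$, giving a first-order local truncation error.

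The main step is the energy inequality, and this is where the entropic form of the diffusion pays off. I would follow the computation in \cref{lem:e-conserved} but with $\mathcal{F}^{EC}$ replaced by $\mathcal{F}^{ES}$. Using $\bV_i = \overline{\bV}_{i+1/2} - \tfrac{1}{2}\dbracket{\bV}_{i+1/2}$ from \eqref{eq:jump-average-property},
\begin{align*}
  \bV_i^\top \mathcal{F}^{ES}_{i+1/2}
  = \bV_i^\top \mathcal{F}^{EC}_{i+1/2}
  - \tfrac{1}{2}\overline{\bV}_{i+1/2}^\top\bs{Q}^{ES1}_{i+1/2}\dbracket{\bV}_{i+1/2}
  + \tfrac{1}{4}\dbracket{\bV}_{i+1/2}^\top\bs{Q}^{ES1}_{i+1/2}\dbracket{\bV}_{i+1/2}.
\end{align*}
The first term on the right was already handled in the proof of \cref{lem:e-conserved}, yielding $\mathcal{H}^{EC}_{i+1/2} + \bs{\Psi}_i - \tfrac{g}{2}\dbracket{\bs{B}}_{i+1/2}^\top\mcp(\bbh_{i+1/2})\bs{u}_i$ with $\mathcal{H}^{EC}$ as in \eqref{eq:e-flux}. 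Defining the ES numerical energy flux
\begin{align*}
  \mathcal{H}^{ES}_{i+1/2} \coloneqq \mathcal{H}^{EC}_{i+1/2} - \tfrac{1}{2}\overline{\bV}_{i+1/2}^\top\bs{Q}^{ES1}_{i+1/2}\dbracket{\bV}_{i+1/2},
\end{align*}
the analogous computation on the $i-\tfrac{1}{2}$ interface, and then plugging into $\tfrac{d}{dt}\bs{E}_i = \bV_i^\top \tfrac{d}{dt}\bs{U}_i$ (which uses exactly the same source-term cancellation as in \cref{lem:e-conserved}, since we have not modified $\bs{S}_i$), gives
\begin{align*}
  \frac{d}{dt}\bs{E}_i = -\frac{1}{\Delta x}\!\left(\mathcal{H}^{ES}_{i+1/2}-\mathcal{H}^{ES}_{i-1/2}\right) - \frac{1}{4\Delta x}\!\left(\dbracket{\bV}_{i+1/2}^\top\bs{Q}^{ES1}_{i+1/2}\dbracket{\bV}_{i+1/2} + \dbracket{\bV}_{i-1/2}^\top\bs{Q}^{ES1}_{i-1/2}\dbracket{\bV}_{i-1/2}\right).
\end{align*}
Since $\bs{Q}^{ES1}_{i\pm1/2} = \bs{T}|\bs{\Lambda}|\bs{T}^\top \geq 0$ by \eqref{eq:entropic-roe-diffusion-1d}, the bracketed quadratic forms are non-negative, and \eqref{eq:es-condition} follows. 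The principal obstacle is bookkeeping — in particular ensuring that the splitting $\bV_i = \overline{\bV}_{i+1/2} - \tfrac{1}{2}\dbracket{\bV}_{i+1/2}$ is applied consistently on both interfaces, and that the source discretization from \eqref{eq:ec-source}, which was tuned to the EC flux, still balances the $\dbracket{\bs{B}}$ contributions as in \cref{lem:e-conserved} now that the flux has been augmented.
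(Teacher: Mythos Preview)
Your proposal is correct and follows essentially the same approach as the paper's own proof: the well-balancedness is reduced to $\dbracket{\bV}_{i\pm1/2}=\bs{0}$ at lake-at-rest so that the ES1 flux collapses to the EC flux, the first-order accuracy comes from $\dbracket{\bV}_{i+1/2}=\mathcal{O}(\Delta x)$, and the energy inequality is obtained by multiplying the scheme by $\bV_i^\top$, splitting $\bV_i=\overline{\bV}_{i\pm1/2}\mp\tfrac{1}{2}\dbracket{\bV}_{i\pm1/2}$, and invoking the EC computation from \cref{lem:e-conserved} to arrive at exactly the identity the paper states, with the same numerical entropy flux $\mathcal{H}^{ES1}_{i+1/2}=\mathcal{H}_{i+1/2}-\tfrac{1}{2}\overline{\bV}_{i+1/2}^\top\bs{Q}^{ES1}_{i+1/2}\dbracket{\bV}_{i+1/2}$ and the same sign-definite remainder.
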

\begin{proof}
  We omit some details that are similar to the proof of \cref{thm:ec-scheme}. We have already established in \Cref{thm:ec-scheme} that $\mathcal{F}_{i+1/2}^{EC}$ is second-order accurate. That this ES1 scheme is first-order is direct from the definition of $\bs{V}_i$ in \eqref{eq:Vi-def}, resulting in the approximation 
  \begin{align*}
 \dbracket{\bs{V}}_{i+1/2} \approx \Delta x \hV_x(x_{i+1/2}).
  \end{align*}
  which implies that the diffusive augmentation in \eqref{eq:FES} commits a first-order local truncation error.

  To establish that this scheme is well-balanced, we assume the stochastic lake-at-rest initial data \eqref{eq:wbinit}, and this coupled with the definition of $\bs{V}_i$ in \eqref{eq:Vi-def} implies $\dbracket{\bs{V}}_{i+1/2} = 0$. Since the EC flux and source are well-balanced (\cref{lem:ec-well-balanced}), then this implies that this ES1 scheme is also well-balanced.

  Finally, we seek to show the ES property. We define the ES1 energy flux,
  \begin{align*}
      \mathcal{H}^{ES1}_{i+\frac{1}{2}} = \mathcal{H}_{i+\frac{1}{2}} - \frac{1}{2} \bbV_{i+\frac{1}{2}}^\top \boldsymbol{Q}_{i+\frac{1}{2}}^{ES1}\dbracket{\boldsymbol{V}}_{i+\frac{1}{2}}.
  \end{align*}
  with $\mathcal{H}_{i+1/2}$ as defined in \eqref{eq:e-flux}. As in \cref{lem:e-conserved}, we multiply \eqref{eq:fvsemi-discrete} by $\bs{V}_i^\top$; after manipulations that are similar to those in the proof of \cref{lem:e-conserved}, we have,
        \begin{align*}
          \frac{d}{dt} \bs{E}_i(t) = &-\frac{1}{\Delta x}\left(\mathcal{H}^{ES1}_{i+\frac{1}{2}} 
            - \mathcal{H}^{ES1}_{i-\frac{1}{2}}\right)\\
            &-\frac{1}{4\Delta x}\left(\dbracket{\boldsymbol{V}}_{i+\frac{1}{2}}^\top \boldsymbol{Q}_{i+\frac{1}{2}}^{ES1} \dbracket{\boldsymbol{V}}_{i+\frac{1}{2}} +\dbracket{\boldsymbol{V}}_{i-\frac{1}{2}}^\top \boldsymbol{Q}_{i-\frac{1}{2}}^{ES1} \dbracket{\boldsymbol{V}}_{i-\frac{1}{2}} \right). 
        \end{align*}
        Since $\bs{Q}_{i+1/2}^{ES1}$ is positive semi-definite, then this scheme satisfies \eqref{eq:es-condition}, and hence is an ES scheme.
\end{proof}

\subsection{ES1 diffusion vs Roe diffusion}
We provide in this section a result that motivates and justifies our particular form of the ES1 diffusion modification defined in \eqref{eq:entropic-roe-diffusion-1d} and \eqref{eq:ES-ES1}. This result states that if the bottom topography function vanishes (i.e., we are in the specialized case of a conservation law), then our chosen Roe-type ES1 diffusion in \eqref{eq:roe-diff-operator} and \eqref{eq:roe-diffusion-1d} coincides with a standard Roe-type diffusion term. Hence, in specialized scenarios our diffusive augmentations using entropic variables are equivalent to more standard Roe-type diffusion.
%a more standard Roe averaging of the conservative variables produces a diffusion operator that mimics the one defined with the Roe-type averaging considered above.
\begin{proposition}\label{prop:flatRoe}
  Define the Roe diffusion matrix as in \eqref{eq:roe-diffusion}, but using the flux Jacobian evaluated at $\widetilde{\bU}_{i+\frac{1}{2}}$,
  \begin{align}\label{eq:modified-roe-flux}
    \frac{\partial \widehat{F}}{\partial \hU}(\widetilde{\bU}_{i+\frac{1}{2}}) &=\boldsymbol{T}^{\text{Roe}}\bs{\Lambda}^{\text{Roe}}\left(\boldsymbol{T}^{\text{Roe}}\right)^{-1}.
  \end{align}
  where we have evaluated the flux jacobian at $\widetilde{\bU}_{i+1/2}$ instead of at $\overline{\bU}_{i+1/2}$. 
%  \begin{align*}
%    \boldsymbol{Q}^{\text{Roe}}_{i+\frac{1}{2}} &\coloneqq \boldsymbol{T}^{\text{Roe}} \vert\bs{\Lambda}^{\text{Roe}}\vert\left(\boldsymbol{T}^{\text{Roe}}\right)^{-1}, &
%    \frac{\partial \widehat{F}}{\partial \hU}(\overline{\bU}_{i+1/2}) &=\boldsymbol{T}^{\text{Roe}}\bs{\Lambda}^{\text{Roe}}\left(\boldsymbol{T}^{\text{Roe}}\right)^{-1}.
%  \end{align*}
  Assume $\bs{B}_i = 0$ for all $i \in [M]$. Then,\\
  \begin{align}\label{eq:flatRoe}
    \boldsymbol{Q}^{\text{Roe}}_{i+\frac{1}{2}}\dbracket{\bU}_{i+\frac{1}{2}} = \boldsymbol{Q}^{ES1}_{i+\frac{1}{2}}\dbracket{\bV}_{i+\frac{1}{2}}.
  \end{align}
% {\color{red} Akil, I commented part about ES1 and Roe fluxes since they are
%   not the same, only the diffusion correction part of the flux becomes the same}
 %and in particular $\mathcal{F}^{ES1}_{i+1/2} = \mathcal{F}^{Roe}_{i+1/2}$.
\end{proposition}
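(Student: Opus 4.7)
The plan is to reduce the identity \eqref{eq:flatRoe} to a Roe-type jump condition at the averaged state $\widetilde{\bU}_{i+1/2}$. Observe that $\bs{Q}^{\text{Roe}}_{i+1/2}$ and $\bs{Q}^{ES1}_{i+1/2}$ are both built from the spectral decomposition of the same matrix $\partial \widehat{F}/\partial \hU(\widetilde{\bU}_{i+1/2})$ (compare \eqref{eq:modified-roe-flux} and \eqref{eq:roe-diffusion-1d}), so they share the eigenvalues $|\bs{\Lambda}|$, and any two eigenvector matrices differ by a positive diagonal factor that commutes with $|\bs{\Lambda}|$. Consequently $\bs{Q}^{\text{Roe}}_{i+1/2}$ can be rewritten as $\bs{T}|\bs{\Lambda}|\bs{T}^{-1}$ with the very same $\bs{T}$ used in $\bs{Q}^{ES1}_{i+1/2}$, and the identity \eqref{eq:flatRoe} becomes $\bs{T}\bs{T}^{\top}\dbracket{\bV}_{i+1/2} = \dbracket{\bU}_{i+1/2}$.

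The scaling of $\bs{T}$ implicit in \eqref{eq:roe-diff-operator} is the standard Barth--Tadmor scaling for entropy-stable schemes, under which $\bs{T}\bs{T}^{\top} = (\partial^2 E / \partial \hU^2)^{-1}(\widetilde{\bU}_{i+1/2}) = (\partial \bU / \partial \bV)(\widetilde{\bU}_{i+1/2})$. This is precisely the property that makes $\bs{Q}^{ES1}_{i+1/2}$ both symmetric positive semidefinite (as used already in \cref{thm:es1}) and compatible with the entropy framework. Invoking it, the proposition reduces to verifying the Roe-type jump condition $(\partial \bU/\partial \bV)(\widetilde{\bU}_{i+1/2})\,\dbracket{\bV}_{i+1/2} = \dbracket{\bU}_{i+1/2}$ under the assumption $\bs{B}_i \equiv 0$.

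This last identity I would check by a direct calculation that draws on three ingredients. First, I would invert the block Hessian of $E$ computed in the proof of \cref{lem:sg-econvex} using the $2\times 2$ block inversion formula, evaluated at $\widetilde{\bU}_{i+1/2}$, where the associated height and velocity are exactly $\bbh_{i+1/2}$ and $\bbu_{i+1/2}$. Second, I would expand $\dbracket{\bV}_{i+1/2}$ from the explicit formula \eqref{entrvar} (with $\hB=0$), invoking relation \eqref{eq:relation1} to replace $\dbracket{\mcp(\bu)\bu}_{i+1/2}$ by $2\,\mcp(\bbu_{i+1/2})\dbracket{\bu}_{i+1/2}$. Third, I would expand $\dbracket{\bq}_{i+1/2}$ via the product-difference identity $\dbracket{\bq}_{i+1/2} = \mcp(\bbu_{i+1/2})\dbracket{\bh}_{i+1/2} + \mcp(\bbh_{i+1/2})\dbracket{\bu}_{i+1/2}$, which follows from the linearity of $\mcp(\cdot)$ together with \eqref{eq:commute}. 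Assembling these, the resulting block multiplication yields the first component $\dbracket{\bh}_{i+1/2}$ (after the $g^{-1}\mcp(\bbu)\dbracket{\bu}$ terms cancel) and the second component $\dbracket{\bq}_{i+1/2}$ (after the $g^{-1}\mcp(\bbu)^2\dbracket{\bu}$ terms cancel). The main subtlety is identifying the Barth--Tadmor scaling of $\bs{T}$ that implicitly underlies \eqref{eq:roe-diff-operator}; once granted, the remainder is mechanical, and the fact that equality holds exactly reflects the choice of $\widetilde{\bU}_{i+1/2}$ as the ``right'' Roe average for the SGSWE.
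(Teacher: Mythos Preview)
Your proposal is correct and follows essentially the same logical route as the paper: both reduce \eqref{eq:flatRoe} to the pair of facts (i) $\bs{T}_{i+1/2}\bs{T}_{i+1/2}^\top = (\partial \bU/\partial \bV)(\widetilde{\bU}_{i+1/2})$ and (ii) $(\partial \bU/\partial \bV)(\widetilde{\bU}_{i+1/2})\,\dbracket{\bV}_{i+1/2} = \dbracket{\bU}_{i+1/2}$, and both verify (ii) by exactly the block computation you outline, using \eqref{eq:relation1} and the product-difference identity for $\dbracket{\bq}_{i+1/2}$.

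The only substantive difference is how (i) is handled. You invoke the Barth--Tadmor eigenvector scaling as a general principle and treat it as ``granted''; the paper instead makes the scaling concrete. It recalls the explicit symmetrizer $\bR$ from the hyperbolicity theorem (Theorem~\ref{thm:hyperbolicity}), sets $\bs{T}_{i+1/2} = \bR_{i+1/2}\bs{L}_{i+1/2}$ with $\bs{L}_{i+1/2}$ the orthogonal eigenvector matrix of the symmetric intermediate matrix $\bs{D}_{i+1/2}$, and then verifies $\bR_{i+1/2}\bR_{i+1/2}^\top = (\bV^{\text{flat}}_{\bU})_{i+1/2}$ by direct block multiplication. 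This has the advantage of pinning down \emph{which} eigendecomposition is meant in \eqref{eq:roe-diffusion-1d}, which the definition itself leaves unspecified; your version correctly flags this as the main subtlety but does not actually carry out the verification. A minor imprecision: two eigenvector matrices of the same diagonalizable matrix differ by an invertible diagonal factor (not necessarily positive), though your conclusion that $\bs{Q}^{\text{Roe}}_{i+1/2} = \bs{T}_{i+1/2}|\bs{\Lambda}_{i+1/2}|\bs{T}_{i+1/2}^{-1}$ is unaffected.
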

Proving this result requires some setup: 
%We seek to show \eqref{eq:flatRoe}, for which we provide a direct and self-contained presentation. 
Under the assumptions of \cref{prop:flatRoe} we consider the SGSWE \eqref{eq:swesg41d} with flat bottom, i.e., $\hB = \boldsymbol{0}$, together with entropy $E^{\text{flat}}(\hU) = \frac{1}{2}(\hq)^\top\hu+\frac{g}{2}\Vert\hh\Vert^2$ and the entropy variables, 
    \begin{align}\label{eq:entropicvarflat}
        \hV^{\text{flat}} = \partial_{\hU}E = \begin{pmatrix}
        -\frac{1}{2}\mcp(\hu)\hu+g\hh\vspace{0.5em}\\ \hu
        \end{pmatrix}.
    \end{align}
Our main tool will be some results of the proof of Theorem 3.1 in \cite{doi:10.1137/20M1360736}; in particular, while we have provided the flux Jacobian for this system in \eqref{eq:x-jacobian1d}, we will need the explicit similarity transform that accomplishes its symmetrization.
\begin{lemma}[\cite{doi:10.1137/20M1360736}, Theorem 3.1]
Assume $\mcp(\hh) > 0$. Define $G = \sqrt{g\mcp(\hh)}$ as the positive definite square root matrix of $g\mcp(\hh)$. Then,
\begin{align*}
    \frac{\partial \widehat{F}}{\partial \hU}(\hU) = \bR\boldsymbol{D}\bR^{-1},
\end{align*}
where $\boldsymbol{D}$ is the symmetric matrix, 
\begin{align}\label{eq:matrixD}
  \boldsymbol{D}(\hU) = \frac{1}{2} \left( \begin{array}{cc} 2 G + \mcp(\hu) + gG^{-1}\mcp(\hq)G^{-1} & 
    \mcp(\hu) - gG^{-1}\mcp(\hq)G^{-1} \vspace{.5em}\\ 
    \mcp(\hu) - gG^{-1}\mcp(\hq)G^{-1} & \mcp(\hu) +  g G^{-1}\mcp(\hq)G^{-1} - 2 G\end{array}\right),
\end{align}
and 
\begin{align}\label{eq:scaledeigenmatrix}
  \bR(\hU) = \frac{1}{\sqrt{2g}}\begin{pmatrix}
    I&I\\
    \mcp(\hu)+\sqrt{g\mcp(\hh)}&
    \mcp(\hu)-\sqrt{g\mcp(\hh)}
    \end{pmatrix}.
\end{align}
\end{lemma}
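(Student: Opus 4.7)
The plan is to verify the asserted similarity $\partial \widehat{F}/\partial \hU = \bR\boldsymbol{D}\bR^{-1}$ in the equivalent form $J\bR = \bR\boldsymbol{D}$, where $J$ denotes the flux Jacobian given in \eqref{eq:x-jacobian1d}. This sidesteps any explicit computation of $\bR^{-1}$. Invertibility of $\bR$ itself is immediate from the Schur-complement formula with pivot block $I$, which yields $\det\bR \propto \det(-2G)$; this is nonzero since $G = \sqrt{g\mcp(\hh)}$ is well-defined and nonsingular under the hypothesis $\mcp(\hh)>0$.

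I would then introduce the compact notation $M \coloneqq \mcp(\hu)$, $G \coloneqq \sqrt{g\mcp(\hh)}$ (so $G^{2}=g\mcp(\hh)$), and $N \coloneqq \mcp(\hq)\mcp^{-1}(\hh)$, together with the single algebraic identity that ultimately makes everything work: $NG = g\mcp(\hq) G^{-1}$, which is immediate from $\mcp^{-1}(\hh) = gG^{-2}$. With $X \coloneqq M/2$ and $Y \coloneqq (g/2)G^{-1}\mcp(\hq)G^{-1}$ as in the definition of $\boldsymbol{D}$, this identity also reads $2GY = NG$.

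The verification is then a block-by-block matching of the two products $J\bR$ and $\bR\boldsymbol{D}$. For the top block row of $J\bR$, the top block row of $J$ is $(O,\, I)$, so one simply reads off the bottom block row of $\bR$, namely $\frac{1}{\sqrt{2g}}[M+G,\; M-G]$. The top block row of $\bR\boldsymbol{D}$ equals $\frac{1}{\sqrt{2g}}[(G+X+Y)+(X-Y),\; (X-Y)+(X+Y-G)] = \frac{1}{\sqrt{2g}}[G+M,\; M-G]$, so the top blocks match. For the bottom block row, expanding $(G^{2}-NM)\cdot I + (N+M)(M\pm G)$ causes the $\pm NM$ cross terms to cancel, leaving $G^{2}+M^{2}\pm MG \pm NG$. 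Expanding $(M+G)(G+X+Y) + (M-G)(X-Y)$ and its companion $(M+G)(X-Y)+(M-G)(X+Y-G)$, and then applying $2X = M$ and $2GY = NG$, yields exactly the same expressions, completing the match.

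The main obstacle is bookkeeping of noncommuting factors: $M$, $G$, and $N$ do not pairwise commute in general (indeed, this noncommutativity is precisely why stochastic Galerkin hyperbolicity is nontrivial), so factor orderings must be preserved through every expansion and one cannot, for instance, combine $MG$ with $GM$. The key algebraic bridge between the two sides is the single identity $2GY = NG$, which is exactly what couples the specifically chosen $gG^{-1}\mcp(\hq)G^{-1}$ block appearing in $\boldsymbol{D}$ to the $N$-dependent lower-left block of $J$; without this carefully engineered cross term in $\boldsymbol{D}$, the symmetrization would not close.
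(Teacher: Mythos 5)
Your verification is correct. Writing $J$ for the flux Jacobian \eqref{eq:x-jacobian1d} and $M=\mcp(\hu)$, $G=\sqrt{g\mcp(\hh)}$, $N=\mcp(\hq)\mcp^{-1}(\hh)$ as you do, the identity $NG=g\mcp(\hq)G^{-1}$ (equivalently $2GY=NG$) is indeed the single bridge needed: the top block rows of $J\bR$ and $\bR\boldsymbol{D}$ both equal $\tfrac{1}{\sqrt{2g}}\,[\,M+G,\;M-G\,]$, and after the $\pm NM$ cancellation the bottom block rows both reduce to $\tfrac{1}{\sqrt{2g}}\,[\,G^{2}+M^{2}+MG+NG,\;G^{2}+M^{2}-MG-NG\,]$, with all factor orderings preserved. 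The Schur-complement argument for invertibility of $\bR$ (pivot block $I$, complement $(M-G)-(M+G)=-2G$, nonsingular since $\mcp(\hh)>0$) is also sound. Be aware, though, that this lemma is \emph{imported}: the paper gives no proof and simply cites Theorem~3.1 of the earlier reference, so your block-by-block check of $J\bR=\bR\boldsymbol{D}$ is a self-contained substitute for that citation rather than a variant of an argument appearing in this paper; it is presumably close in spirit to the cited construction, which obtains $\bR$ as the stochastic analogue of the deterministic SWE eigenvector matrix. One small omission: the statement also asserts that $\boldsymbol{D}$ is \emph{symmetric}, which your write-up never addresses. This is immediate — each $\mcp(\cdot)$ is symmetric by \eqref{eq:pmatrix}, hence so are $G$, $G^{-1}$, $M$, and $gG^{-1}\mcp(\hq)G^{-1}$, and the two off-diagonal blocks of $\boldsymbol{D}$ coincide — but it should be stated, since the symmetry of $\boldsymbol{D}$ is precisely what the paper later exploits (via $\bs{L}^{-1}=\bs{L}^{\top}$ in the proof of \cref{prop:flatRoe}).
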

The second lemma reveals the relation between the cell interface jump of
$\bV^{\text{flat}}$ (the spatial approximation corresponding to the
cell-averaged entropy variable $\hV^{\text{flat}}$ in \eqref{eq:entropicvarflat}) and $\bU$.% at the same cell interface.
\begin{lemma}\label{eq:jumprelation}
  Recall the definition of $\widetilde{\bU}_{i+\frac{1}{2}}$ in \eqref{eq:roe-diffusion-1d}:
  \begin{align*}%\label{eqapp:intermediate}
    \widetilde{\bU}_{i+\frac{1}{2}} = \begin{pmatrix}
    \bbh_{i+\frac{1}{2}}\\
    \mcp(\bbh_{i+\frac{1}{2}})\bbu_{i+\frac{1}{2}}
    \end{pmatrix}
\end{align*}
which is an intermediate state defined by the arithmetic average of $\bh$
and $\bu$ across the cell interface $x = x_{i+\frac{1}{2}}$. Denote,
$\bV^{\text{flat}}$ to be the corresponding spatial approximation of the
cell-averaged entropy variable defined in \eqref{eq:entropicvarflat}. Then,
\begin{enumerate}%[(i)]
    \item The jump $\dbracket{\bU}_{i+\frac{1}{2}}$ is a rescaling of the jump $\dbracket{\bV^{\text{flat}}}_{i+\frac{1}{2}}$, i.e.,
    \begin{align}\label{eq:e-jump-to-c-jump}
        \dbracket{\bU}_{i+\frac{1}{2}} = (\bV^{\text{flat}}_{\bU})_{i+\frac{1}{2}}\dbracket{\bV^{\text{flat}}}_{i+\frac{1}{2}},
    \end{align}
    where 
    \begin{align*}
      (\bV^{\text{flat}}_{\bU})_{i+\frac{1}{2}} &\coloneqq \frac{1}{g}\begin{pmatrix}
        I&\mcp(\bbu_{i+\frac{1}{2}})\\
        \mcp(\bbu_{i+\frac{1}{2}})& \mcp^2(\bbu_{i+\frac{1}{2}})+g\mcp(\bbh_{i+\frac{1}{2}})
      \end{pmatrix}, \\
      \dbracket{\bV^{\text{flat}}}_{i+1/2} &\stackrel{\eqref{eq:entropicvarflat}}{=} \left(\begin{array}{c} -\frac{1}{2} \dbracket{\mcp(\bu) \bu}_{i+\frac{1}{2}} + g \dbracket{\bh}_{i+\frac{1}{2}}\\ \dbracket{\bu}_{i+\frac{1}{2}}\end{array}\right)
    \end{align*}
  \item Let $\bR_{i+\frac{1}{2}}$ denote the matrix that symmetrizes the flux Jacobian at the state $\widetilde{\bU}_{i+\frac{1}{2}}$,%Define the scaled matrix that symmetrizes the Jacobian
%      $\partial \widehat{F}/\partial \widehat{U}$ at
%      $\bU_{i+\frac{1}{2}}$ (see \cite{doi:10.1137/20M1360736},
%      Theorem 3.1) as,
    \begin{align*}%\label{eq:scaledeigenmatrix}
      \bR_{i+\frac{1}{2}} \coloneqq \bs{R}\left(\widetilde{\bU}_{i+\frac{1}{2}}\right) = \frac{1}{\sqrt{2g}}\begin{pmatrix}
        I&I\\
        \mcp(\bbu_{i+\frac{1}{2}})+\sqrt{g\mcp(\bbh_{i+\frac{1}{2}})}&
        \mcp(\bbu_{i+\frac{1}{2}})-\sqrt{g\mcp(\bbh_{i+\frac{1}{2}})}
        \end{pmatrix},
    \end{align*}
    cf. \eqref{eq:scaledeigenmatrix}.
%    where $\sqrt{g\mcp(\bbh_{i+\frac{1}{2}})}$ is the positive definite square root matrix of $g\mcp(\bbh_{i+\frac{1}{2}})$. 
    Then, 
    \begin{align}\label{eq:u-v-decomp}
        \bR_{i+\frac{1}{2}}\bR_{i+\frac{1}{2}}^\top = (\bV^{\text{flat}}_{\bU})_{i+\frac{1}{2}}.
    \end{align}
\end{enumerate}
\end{lemma}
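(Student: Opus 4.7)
The plan is to verify the two claims of the lemma by direct block-wise computation, both of which reduce to careful bookkeeping with the two jump/average identities already proved in \eqref{eq:relations} together with the commutative property \eqref{eq:commute} of $\mcp$.

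For part (1), I would compute $\dbracket{\bU}_{i+\frac{1}{2}}$ in its two blocks. The top block is immediate: $\dbracket{\bh}_{i+\frac{1}{2}}$. For the bottom block $\dbracket{\bq}_{i+\frac{1}{2}} = \dbracket{\mcp(\bh)\bu}_{i+\frac{1}{2}}$, I would first establish a discrete Leibniz rule $\dbracket{\mcp(\bh)\bu}_{i+\frac{1}{2}} = \mcp(\bbh_{i+\frac{1}{2}})\dbracket{\bu}_{i+\frac{1}{2}} + \mcp(\dbracket{\bh}_{i+\frac{1}{2}})\bbu_{i+\frac{1}{2}}$ by writing $\mcp(\bh_{i+1})\bu_{i+1} - \mcp(\bh_i)\bu_i$ as a telescoping sum in the symmetric form used to prove \eqref{eq:relation2}, and then invoke \eqref{eq:commute} to convert $\mcp(\dbracket{\bh})\bbu$ into $\mcp(\bbu)\dbracket{\bh}$. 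Next I would expand the right-hand side $(\bV^{\text{flat}}_{\bU})_{i+\frac{1}{2}}\dbracket{\bV^{\text{flat}}}_{i+\frac{1}{2}}$ row by row. In the top row the identity \eqref{eq:relation1} in the form $\mcp(\bbu)\dbracket{\bu} = \tfrac{1}{2}\dbracket{\mcp(\bu)\bu}$ cancels the $\dbracket{\mcp(\bu)\bu}$ contribution and leaves $\dbracket{\bh}_{i+\frac{1}{2}}$. In the bottom row, the same identity produces two equal and opposite $\tfrac{1}{g}\mcp^2(\bbu)\dbracket{\bu}$ terms which cancel, leaving exactly $\mcp(\bbu_{i+\frac{1}{2}})\dbracket{\bh}_{i+\frac{1}{2}} + \mcp(\bbh_{i+\frac{1}{2}})\dbracket{\bu}_{i+\frac{1}{2}}$, matching the bottom block of $\dbracket{\bU}_{i+\frac{1}{2}}$.

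For part (2), I would multiply out $\bR_{i+\frac{1}{2}}\bR_{i+\frac{1}{2}}^\top$ directly in $2\times 2$ block form. Writing $A := \mcp(\bbu_{i+\frac{1}{2}})$ and $G := \sqrt{g\mcp(\bbh_{i+\frac{1}{2}})}$, both $A$ and $G$ are symmetric (symmetry of $\mcp$ and of the positive square root of a symmetric positive definite matrix), so the transpose of $\bR_{i+\frac{1}{2}}$ is obtained by transposing its block pattern. The $(1,1)$, $(1,2)$, and $(2,1)$ blocks then evaluate immediately to $I/g$, $A/g$, and $A/g$ respectively after accounting for the prefactor $1/(2g)$. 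The $(2,2)$ block equals $\tfrac{1}{2g}\bigl((A+G)^2 + (A-G)^2\bigr)$; crucially, the noncommuting cross terms $AG$ and $GA$ appear with opposite signs and therefore cancel, yielding $\tfrac{1}{g}(A^2 + G^2) = \tfrac{1}{g}(\mcp^2(\bbu_{i+\frac{1}{2}}) + g\mcp(\bbh_{i+\frac{1}{2}}))$, which is exactly the $(2,2)$ block of $(\bV^{\text{flat}}_{\bU})_{i+\frac{1}{2}}$.

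The computation is essentially mechanical; the only real subtlety is recognizing that $A$ and $G$ do not in general commute, so the cancellation in the $(2,2)$ block of part (2) must be attributed to the sign pattern of $(A+G)^2+(A-G)^2$ rather than to any commutativity. A minor secondary point in part (1) is remembering to apply \eqref{eq:commute} to put $\mcp(\dbracket{\bh})\bbu$ into the form $\mcp(\bbu)\dbracket{\bh}$, which is what produces the symmetric off-diagonal structure of $(\bV^{\text{flat}}_{\bU})_{i+\frac{1}{2}}$ and allows the identification with $\bR\bR^\top$.
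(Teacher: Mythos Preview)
Your proposal is correct and follows essentially the same approach as the paper: for part (1) you establish the same discrete Leibniz identity $\dbracket{\mcp(\bh)\bu}_{i+\frac{1}{2}} = \mcp(\bbu_{i+\frac{1}{2}})\dbracket{\bh}_{i+\frac{1}{2}} + \mcp(\bbh_{i+\frac{1}{2}})\dbracket{\bu}_{i+\frac{1}{2}}$ (the paper's \eqref{eq:relation5}) and then expand the right-hand side blockwise using \eqref{eq:relation1}, and for part (2) you carry out the direct block multiplication that the paper explicitly omits as ``straightforward matrix algebra.'' Your observation that the $(2,2)$ block works because the noncommuting cross terms $AG$ and $GA$ cancel by sign rather than by commutativity is exactly the right point and is the only place one might slip.
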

\begin{proof}
    Part (2), i.e., \eqref{eq:u-v-decomp}, is a straightforward matrix algebra calculation that we omit. For part (1), we first recall that \eqref{eq:relation1} implies, 
    \begin{align}\label{eq:relation4}
        \frac{1}{2}\dbracket{\mcp(\bu)\bu}_{i+\frac{1}{2}} = \mcp(\bbu_{i+\frac{1}{2}})\dbracket{\bu}_{i+\frac{1}{2}}.
    \end{align}    
    Second, we use the linearity of $\mcp(\cdot)$, the property %Meanwhile, using the linearity of $\mcp$-matrices,
    \eqref{eq:jump-and-average} for arithmetic averages, and the commutation property \eqref{eq:commute}, to conclude,
    \begin{equation}\label{eq:relation5}
        \mcp(\bbu_{i+\frac{1}{2}})\dbracket{\bh}_{i+\frac{1}{2}}+\mcp(\bbh_{i+\frac{1}{2}})\dbracket{\bu}_{i+\frac{1}{2}} = \dbracket{\mcp(\bh)\bu}_{i+\frac{1}{2}} = \dbracket{\bq}_{i+\frac{1}{2}}.
    \end{equation}
    Therefore,
    {\footnotesize
    \begin{equation*}
    \begin{aligned}\label{eq:process1}
        (\bV^{\text{flat}}_{\bU})_{i+\frac{1}{2}} \dbracket{\bV^{\text{flat}}}_{i+\frac{1}{2}} %\\
        &= \frac{1}{g}\begin{pmatrix}
        -\frac{1}{2}\dbracket{\mcp(\bu)\bu}_{i+\frac{1}{2}} + g\dbracket{\bh}_{i+\frac{1}{2}}+\mcp(\bbu_{i+\frac{1}{2}})\dbracket{\bu}_{i+\frac{1}{2}}\\
        -\frac{1}{2}\mcp(\bbu_{i+\frac{1}{2}})\dbracket{\mcp(\bu)\bu}_{i+\frac{1}{2}} + g\mcp(\bbu_{i+\frac{1}{2}})\dbracket{\bh}_{i+\frac{1}{2}}+\left(\mcp^2(\bbu_{i+\frac{1}{2}})+g\mcp(\bbh_{i+\frac{1}{2}})\right)\dbracket{\bu}_{i+\frac{1}{2}}
        \end{pmatrix}\\
        \overset{\eqref{eq:relation4}\eqref{eq:relation5}}{=\joinrel=\joinrel=}&\begin{pmatrix}
        \dbracket{\bh}_{i+\frac{1}{2}}\\
        \dbracket{\bq}_{i+\frac{1}{2}}\\
        \end{pmatrix} = \dbracket{\bU}_{i+\frac{1}{2}}.
    \end{aligned}        
    \end{equation*}
    }
\end{proof}
Now we are in position to show \eqref{eq:flatRoe} in \cref{prop:flatRoe}.
\begin{proof}[Proof of \cref{prop:flatRoe}]
 Let $\bs{D}_{i+\frac{1}{2}}$ be the \textit{symmetric} matrix defined in \eqref{eq:matrixD} evaluated at $\bU_{i+\frac{1}{2}}$, and 
$\bs{D}_{i+\frac{1}{2}} = \bs{L}_{i+\frac{1}{2}}\boldsymbol{\Lambda}_{i+\frac{1}{2}}\boldsymbol{L}_{i+\frac{1}{2}}^{\top}$ be its eigenvalue decomposition. Then,
\begin{subequations}\label{eq:T-def}
\begin{align}
  \frac{\partial \widehat{F}}{\partial \hU}(\widetilde{\bU}_{i+\frac{1}{2}}) &=  \bs{R}_{i+\frac{1}{2}}\left(\bs{L}_{i+\frac{1}{2}}\boldsymbol{\Lambda}_{i+\frac{1}{2}}\boldsymbol{L}_{i+\frac{1}{2}}^{\top}\right)\bs{R}^{-1}_{i+\frac{1}{2}} \\
  &\eqqcolon \bs{T}_{i+\frac{1}{2}}\bs{\Lambda}_{i+\frac{1}{2}} \bs{T}_{i+\frac{1}{2}}^{-1},
\end{align}
\end{subequations}
  is an eigendecomposition of the Jacobian matrix $\frac{\partial \widehat{F}}{\partial \hU}(\widetilde{\bU}_{i+\frac{1}{2}})$, where we have used the fact that $\bs{L}_{i+\frac{1}{2}}^{-1} = \bs{L}_{i+\frac{1}{2}}^{\top}$ due to the symmetry of $\bs{D}_{i+\frac{1}{2}}$.  The Roe-diffusion operator evaluated at the location $\widetilde{\bU}_{i+\frac{1}{2}}$ as indicated in \eqref{eq:modified-roe-flux} is then given by, 
  \begin{align*}
    \bs{Q}^{Roe}_{i+\frac{1}{2}} \stackrel{\eqref{eq:roe-diffusion}}{=} 
    \bs{T}_{i+\frac{1}{2}}\vert\boldsymbol{\Lambda}_{i+\frac{1}{2}}\vert\bs{T}_{i+\frac{1}{2}}^{-1}
    %\underbrace{(\bs{R}_{i+\frac{1}{2}}\bs{L}_{i+\frac{1}{2}})}_{\coloneqq\bs{T}_{i+\frac{1}{2}}}\vert\boldsymbol{\Lambda}_{i+\frac{1}{2}}\vert(\bs{R}_{i+\frac{1}{2}}\bs{L}_{i+\frac{1}{2}})^{-1},
  \end{align*}
Therefore,
\begin{equation}\label{eq:roe-entropy-jump}
\begin{aligned}
    \boldsymbol{Q}^{\text{Roe}}_{i+\frac{1}{2}}\dbracket{\bU}_{i+\frac{1}{2}} &= 
    \bs{T}_{i+\frac{1}{2}}\vert\boldsymbol{\Lambda}_{i+\frac{1}{2}}\vert\bs{T}_{i+\frac{1}{2}}^{-1}\dbracket{\bU}_{i+\frac{1}{2}},\\
    %\left(\bR_{i+\frac{1}{2}}\boldsymbol{L}_{i+\frac{1}{2}}\right)\vert\boldsymbol{\Lambda}_{i+\frac{1}{2}}\vert\left(\bR_{i+\frac{1}{2}}\boldsymbol{L}_{i+\frac{1}{2}}\right)^{-1}\dbracket{\bU}_{i+\frac{1}{2}},\\
    &\stackrel{\eqref{eq:T-def},\eqref{eq:e-jump-to-c-jump}}{=}
    \left(\bR_{i+\frac{1}{2}}\boldsymbol{L}_{i+\frac{1}{2}}\right)\vert\boldsymbol{\Lambda}_{i+\frac{1}{2}}\vert\left(\bR_{i+\frac{1}{2}}\boldsymbol{L}_{i+\frac{1}{2}}\right)^{-1}(\bV^{\text{flat}}_{\bU})_{i+\frac{1}{2}}\dbracket{\bV^{\text{flat}}}_{i+\frac{1}{2}},\\
    &\overset{\eqref{eq:u-v-decomp}}{=}\left(\bR_{i+\frac{1}{2}}\boldsymbol{L}_{i+\frac{1}{2}}\right)\vert\boldsymbol{\Lambda}_{i+\frac{1}{2}}\vert\left(\bR_{i+\frac{1}{2}}\boldsymbol{L}_{i+\frac{1}{2}}\right)^{-1}\bR_{i+\frac{1}{2}}\bR_{i+\frac{1}{2}}^\top\dbracket{\bV^{\text{flat}}}_{i+\frac{1}{2}},\\
    &=\bR_{i+\frac{1}{2}}\left(\boldsymbol{L}_{i+\frac{1}{2}}\vert\boldsymbol{\Lambda}_{i+\frac{1}{2}}\vert\boldsymbol{L}^{\top}_{i+\frac{1}{2}}\right)\bR_{i+\frac{1}{2}}^\top\dbracket{\bV^{\text{flat}}}_{i+\frac{1}{2}},\\
    &= \bs{T}_{i+\frac{1}{2}} \vert\boldsymbol{\Lambda}_{i+\frac{1}{2}}\vert \bs{T}_{i+\frac{1}{2}}^\top \dbracket{\bV^{\text{flat}}}_{i+\frac{1}{2}} \\
    &\stackrel{\eqref{eq:entropic-roe-diffusion-1d}}{=} \boldsymbol{Q}^{ES1}_{i+\frac{1}{2}}\dbracket{\bV^{\text{flat}}}_{i+\frac{1}{2}}.
\end{aligned}
\end{equation}
\end{proof}

%\begin{remark}
%In the case of flat bottom, i.e., $\hB\equiv\bs{0}$, one can verify that 
%\begin{align}\label{eq:flatRoe}
%\boldsymbol{Q}^{\text{Roe}}_{i+\frac{1}{2}}\dbracket{\bU}_{i+\frac{1}{2}} = \boldsymbol{Q}^{ES1}_{i+\frac{1}{2}}\dbracket{\bV^{\text{flat}}}_{i+\frac{1}{2}},  
%\end{align}
%where $\boldsymbol{Q}^{\text{Roe}}_{i+\frac{1}{2}}$ is the usual Roe diffusion operator evaluated at the same average state $\bU_{i+\frac{1}{2}}$ defined by \eqref{eq:roe-diffusion-1d} with $\bU_{i}$ and $\bU_{i+1}$ as the left and right states, respectively,
%\begin{align}\label{eq:roe-remark}
%    \boldsymbol{Q}^{\text{Roe}}_{i+\frac{1}{2}} = \boldsymbol{T}_{i+\frac{1}{2}}\vert\bs{\Lambda}_{i+\frac{1}{2}}\vert\boldsymbol{T}_{i+\frac{1}{2}}^{-1},
%\end{align}
%and $\bV^{\text{flat}}$ is the first-order approximation of the
%entropy variable \eqref{entrvar} with the flat bottom $\bs{B}\equiv \bs{0}$. A proof is provided in \Cref{sect:flatRoe}.
%\end{remark}

\subsection{A second-order ES scheme}
To develop a second-order accurate energy-stable scheme, we use jump operators with $O(\Delta x^2)$ accuracy. A natural choice is to use the jumps obtained by non-oscillatory second-order reconstructions of the entropy variable. However, attaining a provable energy-stable scheme requires the more subtle reconstruction procedure in \cite{fjordholm2012arbitrarily} that we follow. The new idea for second-order diffusions is to use reconstructions in order to compute jumps. To that end, we let $\bs{V}^+_i$ and $\bs{V}^-_{i+1}$ be second-order reconstructions from the right and left, respectively, of the entropy variable $\bs{V}(x)$ at location $x = x_{i+1/2}$. We will describe later in this section how these reconstructions are computed. 

Assuming we have these reconstructions in hand, we can compute second-order accurate jumps of the entropy variables:
\begin{align}\label{eq:Vtilde-pw}
    \dabracket{\bV}_{i+\frac{1}{2}} = \bV_{i+1}^- - \bV_{i}^+,
\end{align}
The overall scheme is similar as the previous section, but uses a second-order diffusive augmentation of a conservative flux,
\begin{align}\label{eq:es2-flux}
  \mathcal{F}^{ES2}_{i+\frac{1}{2}} \coloneqq \mathcal{F}^{EC}_{i+\frac{1}{2}} - \frac{1}{2}\bs{Q}^{ES2}_{i+\frac{1}{2}}\dabracket{\boldsymbol{V}}_{i+\frac{1}{2}}.
\end{align}
We choose the matrix $\bs{Q}^{ES2}$ as for the ES1 scheme, 
\begin{align}\label{eq:Q-es2}
  \bs{Q}^{ES2}_{i+1/2} = \bs{Q}^{ES1}_{i+1/2} = \mathcal{Q}_{i+1/2}(\bs{U}_i, \bs{U}_{i+1}) = \bs{T}_{i+1/2} \vert\bs{\Lambda}_{i+1/2} \vert\bs{T}^\top_{i+1/2},
  %\mathcal{Q}_{i+1/2}\left(\bs{U}^+_i, \bs{U}^-_{i+1}\right),
\end{align}
where we recall that the eigendecomposition matrices $\bs{T}$, $\bs{\Lambda}$ are computed from the Roe-type average of the flux Jacobian, cf. \eqref{eq:roe-diff-operator}, \eqref{eq:roe-diffusion-1d}.  One could alternatively select $\bs{Q}^{ES2}$ by using second-order reconstructions of $\bs{U}$ as input to $\mathcal{Q}$, e.g., 
\begin{align*}
  \bs{Q}^{ES2}_{i+1/2} = \mathcal{Q}_{i+1/2}(\bs{U}_i^-, \bs{U}^+_i),
\end{align*}
for some second-order reconstructions $\bs{U}_i^{\pm}$. 

What remain is to describe how $\bs{V}_i^{\pm}$ are computed in a way that ensures the energy stable property. The main idea is to design $\bV_i^{\pm}$ through a second-order reconstruction of \textit{scaled} (transformed) versions of the entropy variables:
\begin{align}\label{eq:wi-def}
  \bs{w}_i^{\pm} \coloneqq \bs{T}^\top_{i\pm 1/2} \bs{V}_i,
\end{align}
%\an{I also don't know how $\bs{T}_{i+1/2}$ is computed; is it through the Roe-type procedure \eqref{eq:roe-diffusion-1d}, or is it done with the standard Roe average as in \eqref{eq:roe-diffusion}? I don't think it too much, but it's not clear to me. The source of how this is computed appears to be the discussion around equation (3.4) in \cite{fjordholm2012arbitrarily}, but that is not very specific, either.}
where the matrices $\bs{T}_{i\pm1/2}$ are as in \eqref{eq:Q-es2}. 
Once these have been computed, we perform a second-order total variation-diminishing (TVD) reconstruction on the $\bs{w}$ variable at the interfaces:
\begin{align}\label{eq:wtilde-def}
  \widetilde{\bs{w}}_i^{\pm} \coloneqq \bs{w}_i^{\pm} \pm \frac{1}{2} \phi\left( \bs{\theta}_i^{\pm}\right) \circ \dabracket{\bs{w}}_{i\pm 1/2},
\end{align}
where $\circ$ is the Hadamard (elementwise) product on vectors, and $\bs{\theta}_i^\pm$ are difference quotients,
\begin{align*}
  \bs{\theta}_i^\pm \coloneqq \dabracket{\bs{w}}_{i\mp 1/2} \oslash \dabracket{\bs{w}}_{i\pm 1/2},
\end{align*}
where $\oslash$ is the Hadamard (elementwise) division between vectors. We select the function $\phi$ to be the minmod limiter, 
\begin{align}\label{eq:phi-def}
\phi(\theta) = \left\{
    \begin{aligned}
        &0, &&\textit{if }\theta < 0,\\
        &\theta. &&\textit{if }0\le \theta \le 1,\\
        &1, &&\textit{otherwise}.
    \end{aligned}\right.
\end{align}
which operates elementwise on vector inputs. Note that other slope limiter functions $\phi$ may be selected, but minmod is the only valid limiter in this context that also satisfies the TVD property \cite[Section 3.4]{fjordholm2012arbitrarily}. Finally, the desired reconstructions for $\bs{V}_i^{\pm}$ are defined by inverting the $\bs{w}$-to-$\bs{V}$ map,
\begin{align}\label{eq:wtilde-to-Vi}
  \bs{T}^\top_{i\pm1/2} \bs{V}_i^{\pm} \coloneqq \widetilde{\bs{w}}_i^\pm
\end{align}
The full scheme has now been described, and satisfies the following properties.
\begin{theorem}[ES2 scheme]\label{thm:es2}
  The FV scheme \eqref{eq:fvsemi-discrete} choosing the flux $\mathcal{F}_{i+1/2} = \mathcal{F}^{ES2}_{i+1/2}$ defined in \eqref{eq:es2-flux} is a second-order, well-balanced, ES scheme.
\end{theorem}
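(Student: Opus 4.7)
The plan is to mirror the proof of \Cref{thm:es1}, separately establishing second-order accuracy, the well-balanced property, and energy stability. The only structural change relative to ES1 is that the diffusive part of the flux uses the reconstructed jump $\dabracket{\bs{V}}_{i+1/2}$ in place of the first-order jump $\dbracket{\bs{V}}_{i+1/2}$. For the accuracy claim, I would combine the $O(\Delta x^2)$ local truncation error of $\mathcal{F}^{EC}$ (\Cref{lem:ec-lte-2}) with an $O(\Delta x^2)$ bound on the diffusive augmentation; the latter follows because for smooth data away from local extrema, the minmod TVD reconstruction returns $\widetilde{\bs{w}}_i^+$ and $\widetilde{\bs{w}}_{i+1}^-$ that both approximate $\bs{w}(x_{i+1/2})$ to second order, so $\dabracket{\widetilde{\bs{w}}}_{i+1/2} = O(\Delta x^2)$ and hence $\dabracket{\bs{V}}_{i+1/2} = \bs{T}_{i+1/2}^{-\top}\dabracket{\widetilde{\bs{w}}}_{i+1/2} = O(\Delta x^2)$. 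For the well-balanced property, the stochastic lake-at-rest data \eqref{eq:wbinit} yields $\dbracket{\bs{V}}_{i \pm 1/2} = 0$ exactly as in the proof of \Cref{thm:es1}; consequently $\dabracket{\bs{w}}_{i\pm 1/2} = \bs{T}^\top_{i\pm 1/2}\dbracket{\bs{V}}_{i\pm 1/2} = 0$, so the Hadamard products $\phi(\bs{\theta}_i^\pm)\circ\dabracket{\bs{w}}_{i\pm 1/2}$ in \eqref{eq:wtilde-def} vanish, giving $\widetilde{\bs{w}}_i^\pm = \bs{w}_i^\pm$. The diffusive correction then disappears and well-balancedness is inherited from $\mathcal{F}^{EC}$ (\Cref{lem:ec-well-balanced}).

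Energy stability is the main obstacle. Following the computations of \Cref{lem:e-conserved} and \Cref{thm:es1}, I would multiply \eqref{eq:fvsemi-discrete} by $\bs{V}_i^\top$ and introduce the ES2 numerical energy flux
\begin{align*}
\mathcal{H}^{ES2}_{i+\frac{1}{2}} \coloneqq \mathcal{H}_{i+\frac{1}{2}} - \tfrac{1}{2}\bbV_{i+\frac{1}{2}}^\top \bs{Q}^{ES2}_{i+\frac{1}{2}}\dabracket{\bs{V}}_{i+\frac{1}{2}},
\end{align*}
which, after telescoping and algebraic manipulations identical in structure to those in \Cref{lem:e-conserved}, reduces the semi-discrete identity to
\begin{align*}
\frac{d}{dt}\bs{E}_i = -\frac{1}{\Delta x}\left(\mathcal{H}^{ES2}_{i+\frac{1}{2}} - \mathcal{H}^{ES2}_{i-\frac{1}{2}}\right) - \frac{1}{4\Delta x}\left(\dbracket{\bs{V}}_{i+\frac{1}{2}}^\top \bs{Q}^{ES2}_{i+\frac{1}{2}}\dabracket{\bs{V}}_{i+\frac{1}{2}} + \dbracket{\bs{V}}_{i-\frac{1}{2}}^\top \bs{Q}^{ES2}_{i-\frac{1}{2}}\dabracket{\bs{V}}_{i-\frac{1}{2}}\right).
\end{align*}
To obtain \eqref{eq:es-condition} it thus suffices to show the interfacial inequality $\dbracket{\bs{V}}_{i+\frac{1}{2}}^\top \bs{Q}^{ES2}_{i+\frac{1}{2}}\dabracket{\bs{V}}_{i+\frac{1}{2}} \geq 0$.

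The crux of the argument is a componentwise sign property of the minmod reconstruction, and this is the hardest step. Using \eqref{eq:Q-es2} together with the identities $\bs{T}^\top_{i+\frac{1}{2}}\dbracket{\bs{V}}_{i+\frac{1}{2}} = \dabracket{\bs{w}}_{i+\frac{1}{2}}$ and $\bs{T}^\top_{i+\frac{1}{2}}\dabracket{\bs{V}}_{i+\frac{1}{2}} = \dabracket{\widetilde{\bs{w}}}_{i+\frac{1}{2}}$ (the latter following directly from \eqref{eq:wtilde-to-Vi}), the quadratic form rewrites as $\dabracket{\bs{w}}_{i+\frac{1}{2}}^\top |\bs{\Lambda}_{i+\frac{1}{2}}| \dabracket{\widetilde{\bs{w}}}_{i+\frac{1}{2}}$. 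Since $|\bs{\Lambda}_{i+\frac{1}{2}}|$ is diagonal with non-negative entries, non-negativity reduces to the componentwise inequality $(\dabracket{\bs{w}}_{i+\frac{1}{2}})_k (\dabracket{\widetilde{\bs{w}}}_{i+\frac{1}{2}})_k \geq 0$ for each $k$. Subtracting the two instances of \eqref{eq:wtilde-def} across the interface produces
\begin{align*}
\dabracket{\widetilde{\bs{w}}}_{i+\frac{1}{2}} = \left[1 - \tfrac{1}{2}\bigl(\phi(\bs{\theta}_{i+1}^-) + \phi(\bs{\theta}_i^+)\bigr)\right] \circ \dabracket{\bs{w}}_{i+\frac{1}{2}},
\end{align*}
and since the minmod function \eqref{eq:phi-def} satisfies $0 \leq \phi \leq 1$, every entry of the bracketed vector lies in $[0,1]$. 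This yields the required componentwise non-negativity and completes the proof. The restriction to the minmod limiter, as noted in \cite[Section 3.4]{fjordholm2012arbitrarily}, is precisely what guarantees this sign property in the present TVD context.
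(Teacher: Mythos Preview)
Your proposal is correct and follows essentially the same approach as the paper: both arguments hinge on the diagonal relation $\dabracket{\widetilde{\bs w}}_{i+\frac12} = \bs{\Pi}_{i+\frac12}\,\dabracket{\bs w}_{i+\frac12}$ with $\bs{\Pi}_{i+\frac12}$ diagonal having entries in $[0,1]$ (a consequence of $0\le\phi\le1$), together with the identities $\bs{T}^\top_{i+\frac12}\dbracket{\bs V}_{i+\frac12}=\dabracket{\bs w}_{i+\frac12}$ and $\bs{T}^\top_{i+\frac12}\dabracket{\bs V}_{i+\frac12}=\dabracket{\widetilde{\bs w}}_{i+\frac12}$. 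The only cosmetic difference is that the paper packages the energy-stability step by invoking \cite[Lemma~3.2]{fjordholm2012arbitrarily} as a black box, whereas you carry out the semi-discrete entropy identity explicitly and verify the interfacial sign condition $\dbracket{\bs V}^\top\bs Q^{ES2}\dabracket{\bs V}\ge 0$ directly; the mathematical content is identical.
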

We focus the remaining discussion in this section on sketching the proof of the above result. The second-order property results from the fact that the jumps are computed using second-order accurate reconstructions; the well-balanced property can be proven in exactly the same way as is done for the ES1 scheme in the proof of \cref{thm:es1}. To show the ES property, we exercise one of the major results in \cite{fjordholm2012arbitrarily} that we reproduce below.
\begin{lemma}[\cite{fjordholm2012arbitrarily}, Lemma 3.2]
    For each $i$, if there exists a positive diagonal matrix $\bs{\Pi}_{i+1/2}\ge 0$ such that the second-order jump satisfies,
    \begin{align}\label{eq:cond-e-stable}
        \dabracket{\bV}_{i+\frac{1}{2}} = (\boldsymbol{T}_{i+\frac{1}{2}}^\top)^{-1}\bs{\Pi}_{i+\frac{1}{2}}\boldsymbol{T}_{i+\frac{1}{2}}^{\top}\dbracket{\bV}_{i+\frac{1}{2}},
    \end{align}
    then the scheme \eqref{eq:fvsemi-discrete} with flux term $\mathcal{F}_{i+1/2} = \mathcal{F}_{i+1/2}^{ES2}$ is an ES scheme.
\end{lemma}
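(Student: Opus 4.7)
The plan is to parallel the structure of the ES1 proof of \cref{thm:es1} (which itself is modeled on \cref{lem:e-conserved}), but tracking the second-order jump $\dabracket{\bs{V}}_{i+1/2}$ in place of the first-order jump $\dbracket{\bs{V}}_{i+1/2}$ wherever the diffusive augmentation appears. The hypothesized identity \eqref{eq:cond-e-stable} will be used only at the very end, at the one point where the bilinear pairing of $\dbracket{\bs{V}}$ against $\dabracket{\bs{V}}$ (through $\bs{Q}^{ES2}$) would otherwise fail to be sign-definite.

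First I would start from the semi-discrete scheme \eqref{eq:fvsemi-discrete} with flux $\mathcal{F}^{ES2}_{i+1/2}$ and source $\bs{S}_i$ as in \eqref{eq:ec-source}, multiply on the left by $\bs{V}_i^\top$, and use $\bs{V}_i = (\partial \bs{E}_i/\partial \bs{U}_i)^\top$ (cf.\ \eqref{eq:Vi-def}) to rewrite the left-hand side as $d\bs{E}_i/dt$. Splitting $\mathcal{F}^{ES2}_{i+1/2} = \mathcal{F}^{EC}_{i+1/2} - \tfrac{1}{2}\bs{Q}^{ES2}_{i+1/2}\dabracket{\bs{V}}_{i+1/2}$, the contributions from $\mathcal{F}^{EC}$ and $\bs{S}_i$ telescope, by the proof of \cref{lem:e-conserved}, into $-\tfrac{1}{\Delta x}(\mathcal{H}_{i+1/2} - \mathcal{H}_{i-1/2})$ with $\mathcal{H}$ as in \eqref{eq:e-flux}. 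For the diffusive augmentation, I would use the average-jump decomposition \eqref{eq:jump-average-property} to write $\bs{V}_i = \overline{\bs{V}}_{i\pm 1/2} \mp \tfrac{1}{2}\dbracket{\bs{V}}_{i\pm 1/2}$; the ``average'' parts combine across the two interfaces $i\pm 1/2$ into a telescoping term that I would absorb into a second-order numerical energy flux
\begin{align*}
\mathcal{H}^{ES2}_{i+1/2} \coloneqq \mathcal{H}_{i+1/2} - \tfrac{1}{2}\overline{\bs{V}}_{i+1/2}^\top \bs{Q}^{ES2}_{i+1/2}\dabracket{\bs{V}}_{i+1/2}.
\end{align*}
What remains is a residual of the form $-\tfrac{1}{4\Delta x}\bigl(\dbracket{\bs{V}}_{i+1/2}^\top \bs{Q}^{ES2}_{i+1/2}\dabracket{\bs{V}}_{i+1/2} + \dbracket{\bs{V}}_{i-1/2}^\top \bs{Q}^{ES2}_{i-1/2}\dabracket{\bs{V}}_{i-1/2}\bigr)$, and the ES inequality \eqref{eq:es-condition} reduces to the claim that each of these bilinear forms is non-negative.

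The crux is exactly this non-negativity, and this is where the hypothesis is used. Substituting \eqref{eq:cond-e-stable} and the definition $\bs{Q}^{ES2}_{i+1/2} = \bs{T}_{i+1/2}|\bs{\Lambda}_{i+1/2}|\bs{T}_{i+1/2}^\top$ from \eqref{eq:Q-es2}, the middle two factors collapse via $\bs{T}_{i+1/2}^\top (\bs{T}_{i+1/2}^\top)^{-1} = I$, yielding
\begin{align*}
\dbracket{\bs{V}}_{i+1/2}^\top \bs{Q}^{ES2}_{i+1/2}\dabracket{\bs{V}}_{i+1/2} = \bs{y}^\top \bigl(|\bs{\Lambda}_{i+1/2}|\,\bs{\Pi}_{i+1/2}\bigr) \bs{y},
\end{align*}
where $\bs{y} \coloneqq \bs{T}_{i+1/2}^\top \dbracket{\bs{V}}_{i+1/2}$. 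Since $|\bs{\Lambda}_{i+1/2}|$ and $\bs{\Pi}_{i+1/2}$ are both diagonal with non-negative entries, their product is diagonal with non-negative entries, so the quadratic form is $\ge 0$. Combining everything yields $d\bs{E}_i/dt \le -\tfrac{1}{\Delta x}(\mathcal{H}^{ES2}_{i+1/2} - \mathcal{H}^{ES2}_{i-1/2})$, which is \eqref{eq:es-condition}.

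The main obstacle is the non-symmetric pairing $\dbracket{\bs{V}}^\top \bs{Q}^{ES2}\dabracket{\bs{V}}$: because the two jump operators $\dbracket{\cdot}$ and $\dabracket{\cdot}$ act on different data (cell averages vs.\ TVD reconstructions), there is no a priori reason their cross-pairing through $\bs{Q}^{ES2}$ should be sign-definite, and generic second-order reconstructions of $\bs{V}$ will in fact destroy energy stability. The content of the hypothesis \eqref{eq:cond-e-stable} is precisely a commutation relation, mediated by the eigenbasis $\bs{T}_{i+1/2}$ of the Roe-type flux Jacobian and a non-negative diagonal scaling $\bs{\Pi}_{i+1/2}$, that forces the bilinear form to collapse to a symmetric quadratic form on $\bs{T}^\top\dbracket{\bs{V}}$ with manifestly non-negative diagonal weights. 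The remainder of the argument is bookkeeping of the sort already carried out in \cref{lem:e-conserved} and \cref{thm:es1}.
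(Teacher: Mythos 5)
Your proof is correct. Note that the paper does not actually prove this lemma---it is quoted from the cited reference \cite{fjordholm2012arbitrarily} and used as a black box to establish \cref{thm:es2}---so there is no in-paper argument to compare against. Your derivation is the standard one and follows exactly the template the paper does spell out for the first-order case in \cref{lem:e-conserved} and \cref{thm:es1}: multiply the scheme by $\bs{V}_i^\top$, absorb the interfacial-average part of the diffusion term into a modified numerical entropy flux via \eqref{eq:jump-average-property}, and control the sign of the residual jump terms. The one genuinely new step, which you identify and execute correctly, is that the hypothesis \eqref{eq:cond-e-stable} together with $\bs{Q}^{ES2}_{i+1/2}=\bs{T}_{i+1/2}\vert\bs{\Lambda}_{i+1/2}\vert\bs{T}_{i+1/2}^\top$ collapses the a priori sign-indefinite pairing $\dbracket{\bs{V}}_{i+1/2}^\top\bs{Q}^{ES2}_{i+1/2}\dabracket{\bs{V}}_{i+1/2}$ into the quadratic form $\bs{y}^\top\vert\bs{\Lambda}_{i+1/2}\vert\bs{\Pi}_{i+1/2}\bs{y}$ with $\bs{y}=\bs{T}_{i+1/2}^\top\dbracket{\bs{V}}_{i+1/2}$, which is non-negative because the product of the two non-negative diagonal matrices is again diagonal and non-negative.
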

Hence, showing the ES property for our scheme only requires us to establish \eqref{eq:cond-e-stable}. To accomplish this, note that the definition \eqref{eq:wtilde-def} implies,
\begin{align}\label{eq:w-wtilde-jump}
    \dabracket{\widetilde{\bw}}^\ell_{i+\frac{1}{2}} = \left(1-\frac{1}{2}\phi((\bs{\theta}_{i+1}^{-})^\ell) - \frac{1}{2}\phi((\bs{\theta}_i^{+})^\ell)\right)\dabracket{\bw}^\ell_{i+\frac{1}{2}},
\end{align}
I.e., we have,
\begin{align}\label{eq:w-wtilde-jump-Pi}
  \dabracket{\widetilde{\bw}}_{i+\frac{1}{2}} &= \bs{\Pi}_{i+\frac{1}{2}}\dabracket{\bw}_{i+\frac{1}{2}}, & \left(\bs{\Pi}_{i+1/2}\right)_{\ell,\ell} &\coloneqq \left(1-\frac{1}{2}\phi((\bs{\theta}_{i+1}^{-})_\ell) - \frac{1}{2}\phi((\bs{\theta}_i^{+})_\ell)\right)
\end{align}
and in particular $\bs{\Pi}_{i+1/2}$ is a diagonal matrix and positive semi-definite since $0 \leq \phi(\theta) \leq 1$. Since the jump operators $\dabracket{\cdot}$ and $\dbracket{\cdot}$ are linear in their arguments, then combining \eqref{eq:w-wtilde-jump} with the relations \eqref{eq:wi-def} and \eqref{eq:wtilde-to-Vi} that connect $\bs{w}_i$ and $\widetilde{\bs{w}}_i$ to $\bs{V}_i$ and $\bs{V}^{\pm}_i$ yields the relation \eqref{eq:cond-e-stable} with a positive-definite diagonal matrix $\bs{\Pi}_{i+1/2}$. Hence, this is an ES scheme, and completes the proof of \cref{thm:es2}.

Finally, we remark that the implementation of the diffusion term in the ES2 flux \eqref{eq:es2-flux} does not require explicit construction of $\bs{V}^{\pm}_i$. I.e., we have,
\begin{align*}
  \frac{1}{2}\bs{Q}^{ES2}_{i+\frac{1}{2}}\dabracket{\boldsymbol{V}}_{i+\frac{1}{2}} &\stackrel{\eqref{eq:roe-diff-operator},\eqref{eq:cond-e-stable}}{=} \frac{1}{2}\boldsymbol{T}_{i+\frac{1}{2}}\vert\bs{\Lambda}_{i+\frac{1}{2}}\vert\boldsymbol{T}_{i+\frac{1}{2}}^{\top}(\boldsymbol{T}_{i+\frac{1}{2}}^\top)^{-1}\bs{\Pi}_{i+\frac{1}{2}}\boldsymbol{T}_{i+\frac{1}{2}}^{\top}\dbracket{\bV}_{i+\frac{1}{2}}\\
            &= \frac{1}{2}\boldsymbol{T}_{i+\frac{1}{2}}\vert\bs{\Lambda}_{i+\frac{1}{2}}\vert\bs{\Pi}_{i+\frac{1}{2}}\boldsymbol{T}_{i+\frac{1}{2}}^{\top}\dbracket{\bV}_{i+\frac{1}{2}}\\             
             & \stackrel{\eqref{eq:wi-def}}{=}\frac{1}{2}\bs{T}_{i+\frac{1}{2}}\vert\bs{\Lambda}_{i+\frac{1}{2}}\vert\bs{\Pi}_{i+\frac{1}{2}}\dabracket{\bw}_{i+\frac{1}{2}},\\
             &\overset{\eqref{eq:w-wtilde-jump-Pi}}{=}\frac{1}{2}\bs{T}_{i+\frac{1}{2}}\vert\bs{\Lambda}_{i+\frac{1}{2}}\vert\dabracket{\widetilde{\bw}}_{i+\frac{1}{2}},
\end{align*}
and hence one need only compute $\widetilde{\bs{w}}^{\pm}_i$ in order to directly evaluate the diffusion part of the ES2 flux.

\subsection{Algorithmic details}
Our overall scheme is the semi-discrete form \eqref{eq:fvsemi-discrete}, which we pair with a numerical time-stepping scheme. We provide pseudocode in this section that describes a fully discrete SGSWE time-stepping algorithm. This full pseudocode introduces some additional details for the scheme that were devised in \cite{doi:10.1137/20M1360736}, many of which are based on standard procedures used in schemes for deterministic SWE models \cite{kurganov_finite-volume_2018}. We very briefly describe these additional details in the coming sections; more comprehensive discussion can be found in \cite{doi:10.1137/20M1360736}. The full algorithmic pseudocode is given in \cref{alg:swesg}.

\subsubsection{Velocity desingularization}
Computing $\bs{u}_i$ requires inversion of the matrix $\mathcal{P}(\bs{h}_i)$, which is assumed (and enforced in the scheme) to be symmetric and positive-definite. However, this matrix may be ill-conditioned. To ameliorate numerical artifacts associated with this ill-conditioned operation, we employ a \textit{desingularization} procedure, introduced for the deterministic SWE in \cite{kurganov_secondorder_2007-2}. We describe here the stochastic variant of the desingularization procedure, proposed in \cite{doi:10.1137/20M1360736}. If $\mcp(\bs{h}_i)$ has the eigenvalue decomposition,
\begin{align*}
  \mcp(\bs{h}_i) &= \bs{Q} \bs{\Pi} \bs{Q}^\top, & \bs{\Pi} &= \mathrm{diag}(\pi_1, \ldots, \pi_K),
\end{align*}
where $\pi_k > 0$ are the eigenvalues of $\mcp(\bs{h}_i)$, then the desingularization process approximates $\mcp(\bs{h}_i)^{-1} \bs{q}_i$ by regularizing the matrix inverse procedure:
\begin{align}\label{eq:ui-desingularized}
  \bs{u}_i &= \bs{Q} \widetilde{\bs{\Pi}}^{-1} \bs{Q}^T \bs{q}_i, & 
  \bs{\Pi} &= \mathrm{diag}(\widetilde{\pi}_1, \ldots, \widetilde{\pi}_K), & 
  \widetilde{\pi}_k &= \frac{\sqrt{\pi_k^4 + \max\{ \pi_k^4, \epsilon^4\}}}{\sqrt{2} \pi_k},
\end{align}
where $\epsilon > 0$ is a small constant; we choose it to be $\epsilon = \Delta x$. Note that if $\pi_k \geq \epsilon^{1/4}$, then $\widetilde{\pi}_k = \pi_k$, and hence regularization is performed only in the presence of small eigenvalues. Compared to \eqref{eq:ui-def}. This procedure to compute $\bs{u}_i$ is a stabilized way to compute velocities.

For scheme consistency, if the desingularization above is activated, then we recompute the discharge variable:
\begin{align*}
  \bs{q}_i \gets \mcp(\bs{h}_i) \bs{u}_i.
\end{align*}

\subsubsection{Hyperbolicity preservation}
The SGSWE PDE \eqref{eq:swesg41d} is hyperbolic and has an entropy pair if $\mcp(\hh) > 0$, i.e., \Cref{thm:hyperbolicity,thm:eef-sgswe-1d}, respectively. To ensure this holds at the discrete level, we require the condition $\mathcal{P}(\bs{h}_i) > 0$ for every cell $i$. To enforce this, we employ \cite[Theorem 3.4, Corollary 3.5]{doi:10.1137/20M1360736}, which state that a sufficient condition for $\mcp(\bs{h}_i) > 0$ is that for every $m =1, \ldots, M$,
\begin{align}\label{eq:h-pos}
  \hh_i(\xi_m) &> 0, & 
  \hh_i(\xi) &\coloneqq \sum_{k=1}^K \bs{h}_{i,k} \phi_k(\xi_m), & 
  \bs{h}_i &= (\bs{h}_{i,1}, \ldots, \bs{h}_{i,K})^\top,
\end{align}
where $\{\xi_m\}_{m=1}^M$ is a nodal set in $\R^d$ for a positive-weight quadrature rule having sufficient accuracy relative to the $\xi$-polynomial space $P$ defined in \eqref{eq:P-def}. The functions $\phi_k$ are the basis of $P$ in \eqref{eq:P-def} for which $\bs{h}_i$ are coordinates. The function $\hh_i(\xi)$ is the SGSWE approximation to the $\mathcal{I}_i$-cell average of $\hh(x,t,\xi)$ at the current time. Hence, the computational vehicle we use to enforce hyperbolicity of the underlying PDE in our scheme is to enforce the above positivity-type condition on the $\bs{h}_i$ variable.

\subsubsection{Positivity-preservation}
We enforce the positivity condition \eqref{eq:h-pos} by restricting the timestep size. We assume that the current time value of $\bs{h}_i$ satisfies \eqref{eq:h-pos}. If Forward Euler with a stepsize $\Delta t$ is used to discretize \eqref{eq:fvsemi-discrete}, then \eqref{eq:h-pos} is true at the next time step if,
\begin{align}\label{eq:lambda-def}
  \Delta t < \lambda \coloneqq \min_{i} \min_{m=1,\ldots, M} \left| \frac{\Delta x\; \hh_i(\xi_m)}{\widehat{\mathcal{F}}^h_{i+1/2}(\xi_m) - \widehat{\mathcal{F}}^h_{i-1/2}(\xi_m)} \right|,
\end{align}
where $\widehat{\mathcal{F}}^h_{i+1/2}(\cdot)$ is the SG approximation of the $\hh$-variable flux:
\begin{align*}
  \widehat{\mathcal{F}}^h_{i+1/2}(\xi) &\coloneqq \sum_{k=1}^K \mathcal{F}^h_{i+1/2,k} \phi_k(\xi), & 
  \mathcal{F}_{i+1/2} &= \left( \left(\mathcal{F}^h_{i+1/2}\right)^\top, \; \left(\mathcal{F}^q_{i+1/2}\right)^\top \right)^\top \in \R^{2 K}.
\end{align*}
Hence, we enforce positivity preservation by ensuring a small enough timestep so that the positivity condition \eqref{eq:h-pos} is respected globally over all spatial cells. We must also restrict $\Delta t$ to satisfy the wave speed CFL condition; see \cite[Equation (4.16)]{doi:10.1137/20M1360736}.

\subsubsection{Adaptive time-stepping}\label{sssec:adaptive-ts}
The time step restriction \eqref{eq:lambda-def} works for Forward Euler time-stepping. To extend this to a higher-order temporal scheme, we employ a third-order strong stability-preserving scheme, which is a convex combination of Forward Euler steps \cite{gottlieb_strong_2001}. However, the intermediate stages of a(ny) time-stepping scheme need not obey the positivity-preserving property, even if $\Delta t$ is chosen to obey the condition \eqref{eq:lambda-def} determined at the initial step. 

To address this issue, we employ the \textit{adaptive} time-stepping strategy proposed in \cite[Remark 3.6]{chertock_well-balanced_2015}. We refer the reader to that reference for details, and present here only a high-level description of the procedure: $\lambda$ is initialized as the initial stage value of $\lambda$, as shown in \eqref{eq:lambda-def}. At intermediate stages, new intermediate values of $\lambda$ are computed. If an intermediate-stage value of $\lambda$ is smaller than the current value of $\lambda$, then we restart the entire time-step using the new, smaller-$\lambda$ restriction on $\Delta t$.

\begin{algorithm}
  \begin{algorithmic}
    \State Input scheme type: \texttt{scheme} = \texttt{EC}, \texttt{ES1}, or \texttt{ES2}
    \State Input: Bottom topography $B$, initial data $U(t=0)$, polynomial index set $\Lambda$
    \State Input: Terminal time $T$
    \State Initialize: $\bs{U}_i$, $t=0$
    \Repeat
      \State Compute $\bs{B}_i$ from $B$ for all $i$
      \State Compute $\bs{u}_i$ in \eqref{eq:ui-desingularized} for all $i$
      \State Compute $\mathcal{F}^{EC}_{i+1/2}$ for all $i$, given by \eqref{eq:sgswe-ec}
      \If{\texttt{scheme} is \texttt{EC}} for all $i$:
        \State Set $\mathcal{F}_{i+1/2} \gets \mathcal{F}^{EC}_{i+1/2}$.
      \Else{} for all $i$:
        \State Compute entropy variable $\bs{V}_i$ using \eqref{eq:Vi-def}.
        \State Compute $\bs{T}_{i+1/2}$, $\bs{\Lambda}_{i+1/2}$ through \eqref{eq:roe-diffusion-1d}.
        \If{\texttt{scheme} is \texttt{ES1}}:
          \State Compute $\bs{Q}_{i+1/2}^{ES1}$ using \eqref{eq:roe-diffusion-1d},\eqref{eq:entropic-roe-diffusion-1d} with $\bs{T}_{i+1/2}$, $\bs{\Lambda}_{i+1/2}$.
          \State Compute $\mathcal{F}_{i+1/2} \gets \mathcal{F}^{ES}_{i+1/2}$ in \eqref{eq:FES} using $\mathcal{F}^{EC}_{i+1/2}$, $\bs{V}_i$, and $\bs{Q}_{i+1/2}^{ES} \gets \bs{Q}_{i+1/2}^{ES1}$.
        \ElsIf{\texttt{scheme} is \texttt{ES2}}:
          \State Construct $\bs{Q}^{ES2}_{i+1/2}$ as in \eqref{eq:Q-es2} with $\bs{T}_{i+1/2}$, $\bs{\Lambda}_{i+1/2}$.
          \State Construct $\bs{V}_i^{\pm}$ through \eqref{eq:wi-def}, \eqref{eq:wtilde-def}, and \eqref{eq:wtilde-to-Vi}.
          \State Compute $\mathcal{F}_{i+1/2} \gets \mathcal{F}^{ES2}_{i+1/2}$ in \eqref{eq:es2-flux} and \eqref{eq:Vtilde-pw} using $\bs{Q}^{ES2}_{i+1/2}$, $\bs{V}_i^{\pm}$, and $\mathcal{F}^{EC}_{i+1/2}$.
        \EndIf
      \EndIf
      \State Initialize $\lambda$ and $\Delta t$ as shown in \eqref{eq:lambda-def}.
      \State Adaptively determine $\Delta t$ using the procedure discussed in \cref{sssec:adaptive-ts}.
      \State Use a third-order SSP method to take a time step of size $\Delta t$, updating $\bs{h}_i$ and $\bs{q}_i$.
      \State Set $t \gets t + \Delta t$.
    \Until{$t \geq T$}
  \end{algorithmic}
  \caption{The fully discrete SGSWE schemes proposed in this paper; we ignore specifying the handling of boundary conditions.}\label{alg:swesg}
\end{algorithm}

\section{Numerical Experiments}\label{sec:results}
\par Below we present several numerical examples to illustrate properties
of the developed schemes. We refer to the second order energy-conservative scheme,
the first order energy-stable scheme, and the second order energy-stable scheme as
the EC, ES1, and ES2 schemes, respectively. We introduce the relative change in energy quantity,
\begin{align}
  \textrm{relative energy} = \frac{E(t) - E(0)}{E(t)},
\end{align}
where $E(t)$ is computed as $\sum_i \Delta x \bs{E}_i(t)$. This provides a way to visualize the relative change in the discrete energy for different
numerical schemes, namely for the EC, ES1 and ES2. In all tests
below we consider a single (scalar) random variable $\xi$ that is a uniformly distributed random variable on $[-1,1]$; hence our choice for the functions $\phi_k$ are orthonormal Legendre polynomials on $[-1,1]$. We use $K=9$ for the dimension of the PC space $P$. Instead of visualizing the conservative variable $h$ corresponding to water height, we will plot the \textit{water surface} $w$, defined as $w = h + B$, with the bottom topography $B$ superimposed on the same graph; plots of $(w,B)$ are more physically interpretable than directly plotting the water height $h$.
\subsection{Flat-Bottom Dam Break}\label{ssec:results-flat-bot}
In the first experiment, we consider a stochastic water surface,
\begin{equation*}%\label{eq:IV1}
  h(x,0,\xi) + B(x,0,\xi) = w(x,0,\xi) = \left\{\begin{aligned}&2.0+0.1\xi&&x<0\\ &1.5+0.1\xi&&x>0\end{aligned}\right.,\quad q(x,0,\xi) = 0,
\end{equation*} 
with a flat bottom $B(x,t,\xi)\equiv 0$. This is a stochastic
modification of the deterministic ``dam break test'' problem from
\cite{fjordholm2011well}. In \cref{fig:ex1a-flat-bot-wq}, we
use a uniform grid size $\Delta x=400$ over the physical domain $x \in
[-1,1]$,  and compute up to the time $t =
0.4$. We test the example using different numerical
methods (EC, ES1, ES2), Section~\ref{sec:schemes}.
\par From \cref{fig:ex1a-flat-bot-wq}, similar to the
results presented in \cite[Figs. 1 and 4]{fjordholm2011well}, we
observe that the water surface with uncertainties develops a leftward-going
rarefaction wave and a rightward-going shock. Similar to
\cite{fjordholm2011well}, EC computes such solutions accurately,
but at the expense of large post-shock oscillations as observed on
\cref{fig:ex1a-flat-bot-wq} (right plot). These oscillations are
expected since the EC scheme preserves energy, and hence energy is
not dissipated across the shock as it should. We also demonstrate on
\cref{fig:ex1a-flat-bot-mean} (middle and right plots)  the numerical
energy conservation for the EC
scheme. We note that the energy conservation errors due
to time discretization are reduced significantly by decreasing the time
step/CFL constant (right figure), similar to the results reported in
fig.~1 in \cite{fjordholm2011well}. The presented results in 
\cref{fig:ex1a-flat-bot-wq} and \cref{fig:ex1a-flat-bot-mean} (left figure) also illustrate
that ES2 produces less smearing than the ES1 at both the rarefaction and the shock
waves. The schemes ES1 and ES2 are both designed to dissipate
energy which is also confirmed by the numerical experiments as presented
in \cref{fig:ex1a-flat-bot-mean} (middle plot), with the energy
dissipation in the ES2 being lower than in the ES1
scheme. In addition, the numerical results seem to indicate that the ES2 scheme is better able to capture large variance spikes compared to the ES1 scheme. Finally, the employment of the the numerical diffusion operators in ES1 and ES2
schemes, removes oscillations present in the numerical solution using the EC scheme. The observed results are also in agreement with the results
of the deterministic model reported in \cite{fjordholm2011well}.
\begin{figure}[htbp]
    \centering
    \includegraphics[width = .32\textwidth, trim={0 6cm 0 6cm}, clip]{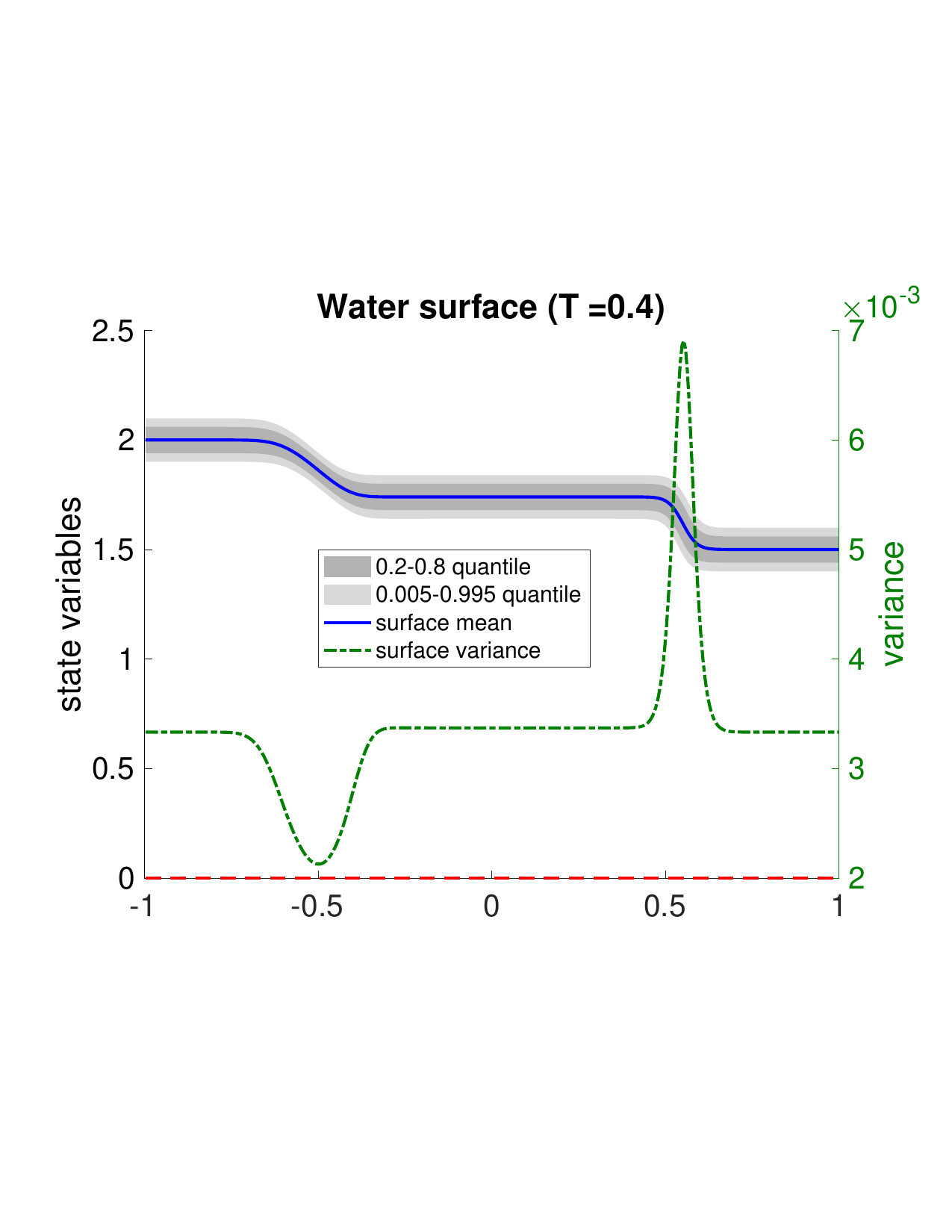}
    \includegraphics[width = .32\textwidth, trim={0 6cm 0 6cm}, clip]{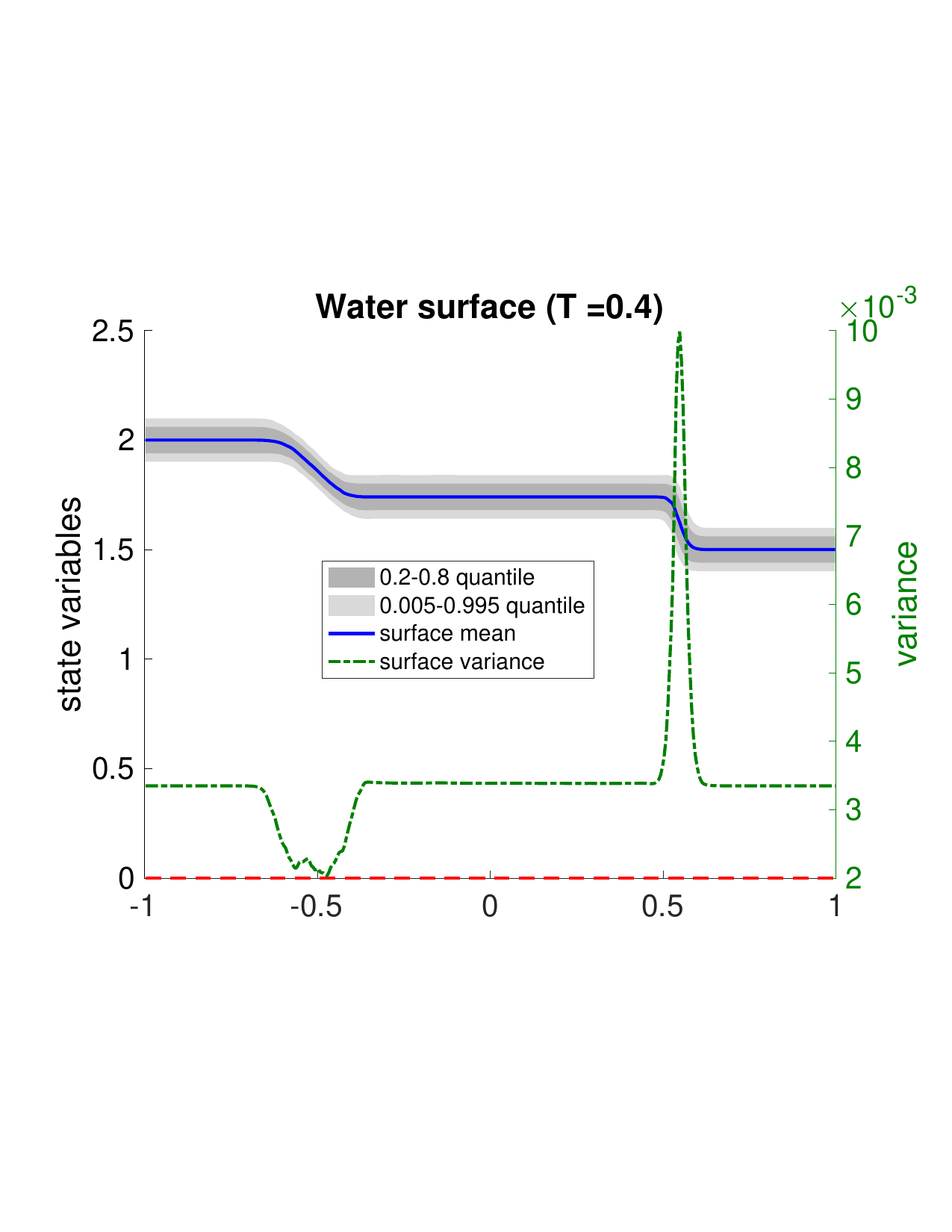}    
    \includegraphics[width = .32\textwidth, trim={0 6cm 0 6cm}, clip]{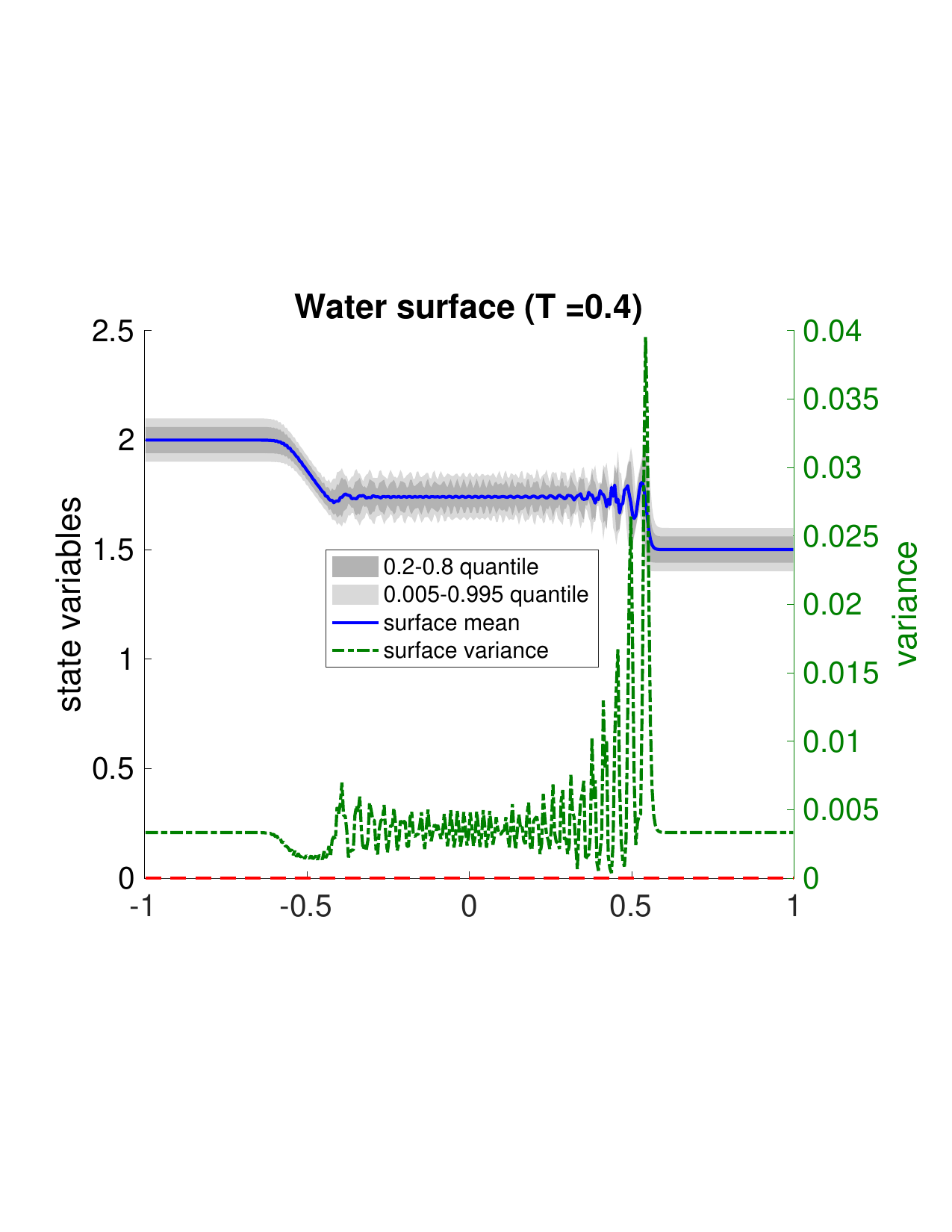}    
    \\
    \includegraphics[width = .32\textwidth, trim={0 6cm 0 6cm}, clip]{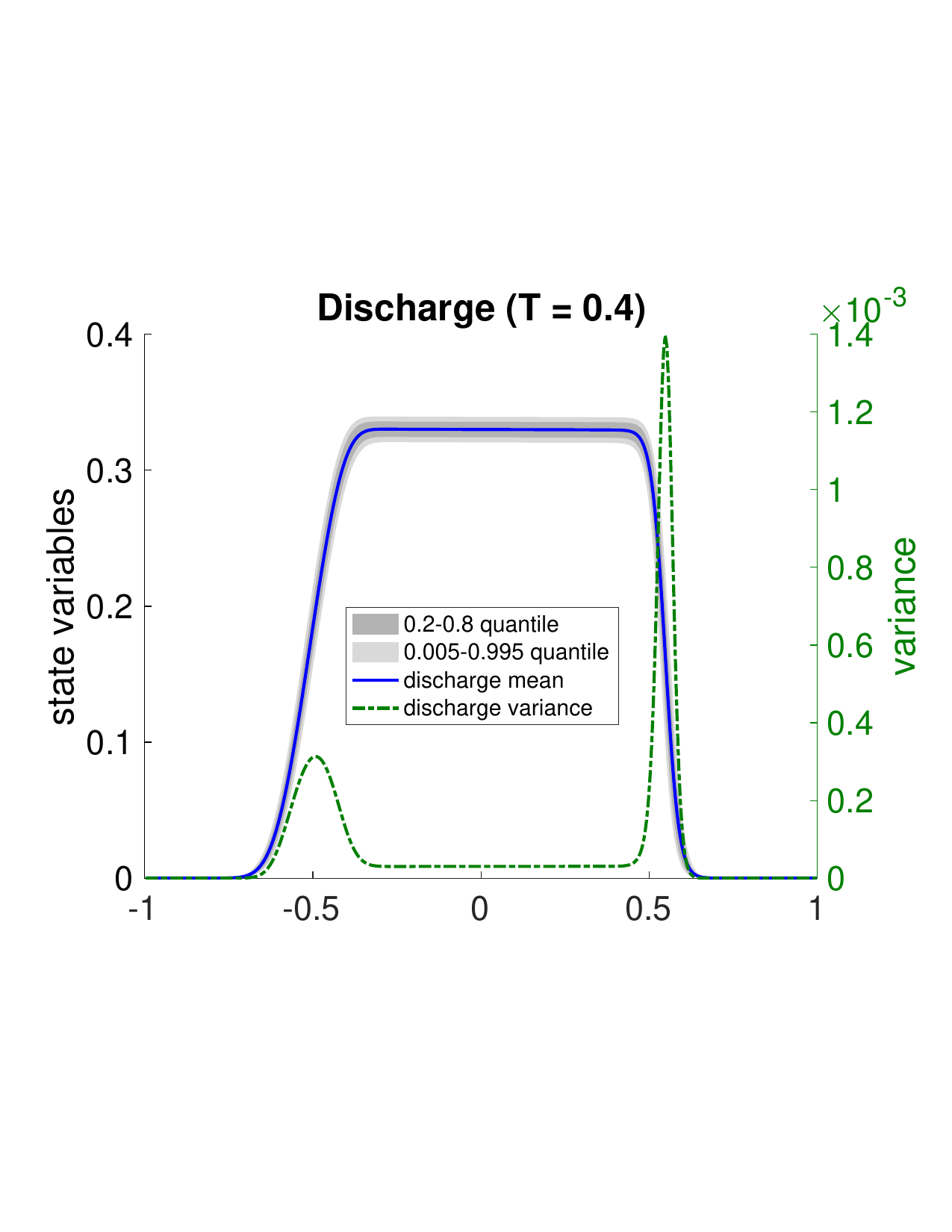}
    \includegraphics[width = .32\textwidth, trim={0 6cm 0 6cm}, clip]{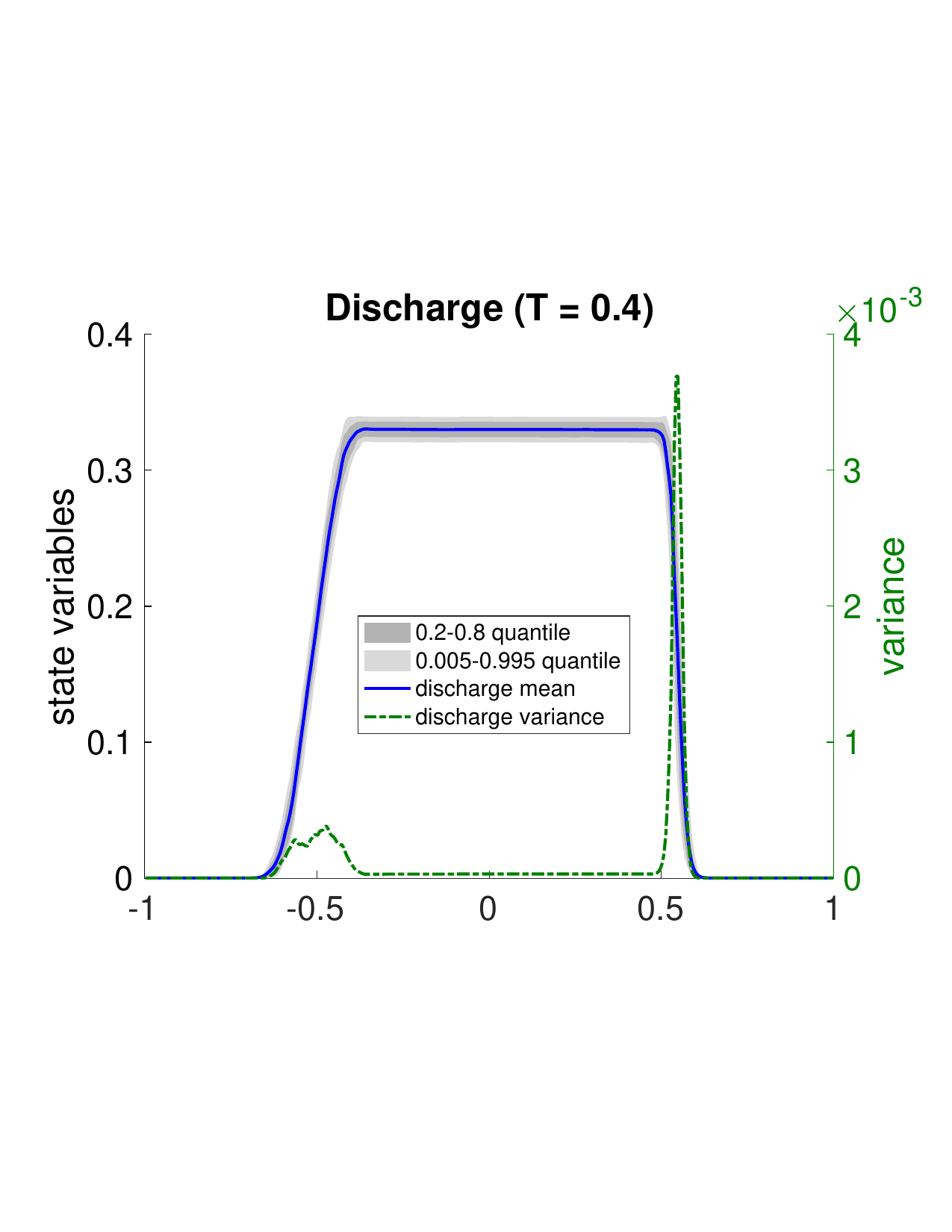}    
    \includegraphics[width = .32\textwidth, trim={0 6cm 0 6cm}, clip]{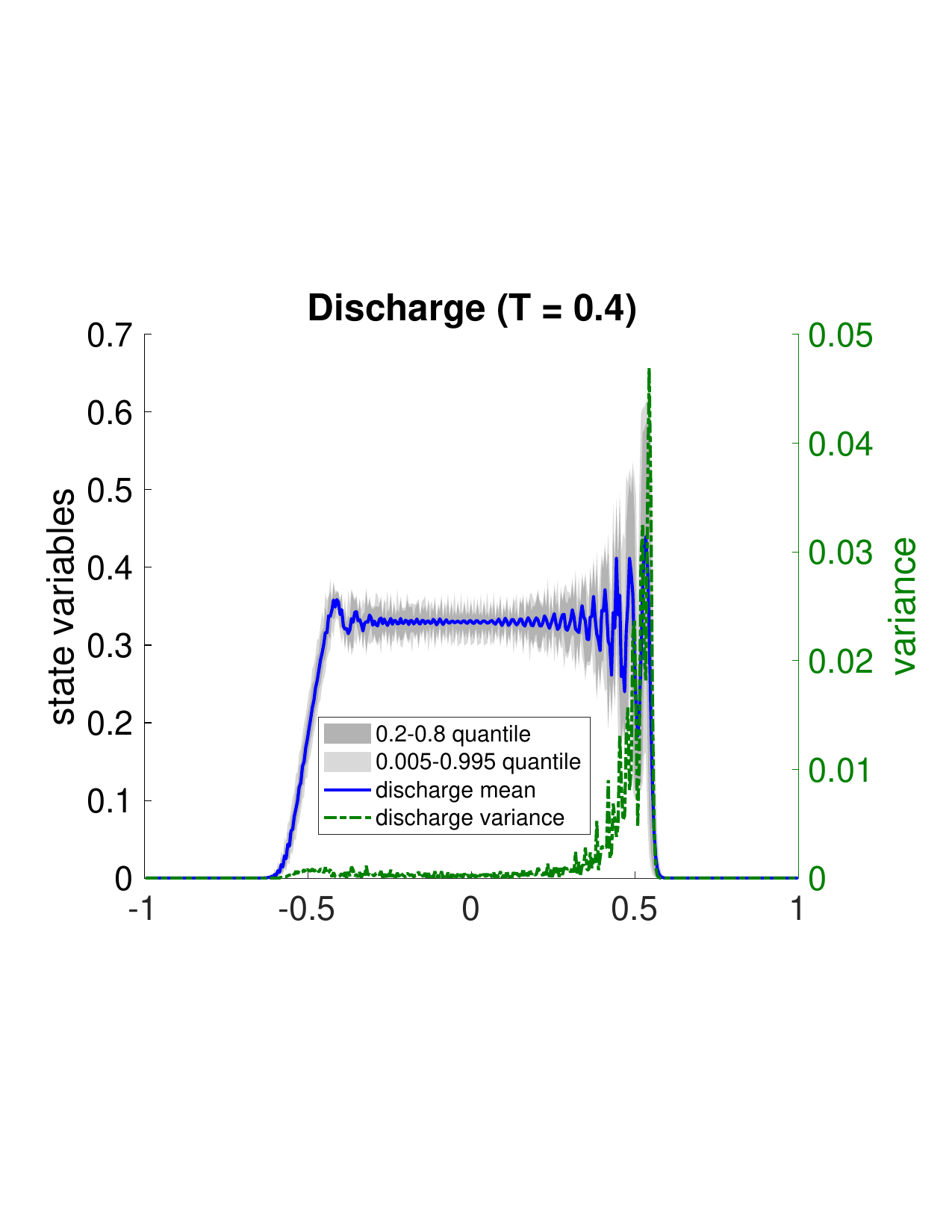}   
   % \includegraphics[width =
  %  .32\textwidth]{figure/1d-ex2-es1-N9-nx400-e.png}
   % \includegraphics[width = .32\textwidth]{figure/1d-ex2-N9-nx400-e-cu1.png}    
  %  \includegraphics[width = .32\textwidth]{figure/1d-ex2-N9-nx400-e-cu2.png}    
    \caption{Results for \cref{ssec:results-flat-bot}. Top: water
      surface, Bottom: discharge: Left: ES1. Middle: ES2. Right: EC. Mesh nx = 400 and PC basis functions K=9.}
    \label{fig:ex1a-flat-bot-wq}
  \end{figure}

  \begin{figure}[htbp]
    \centering
      \includegraphics[width = .32\textwidth, trim={0 6cm 0 6cm}, clip]{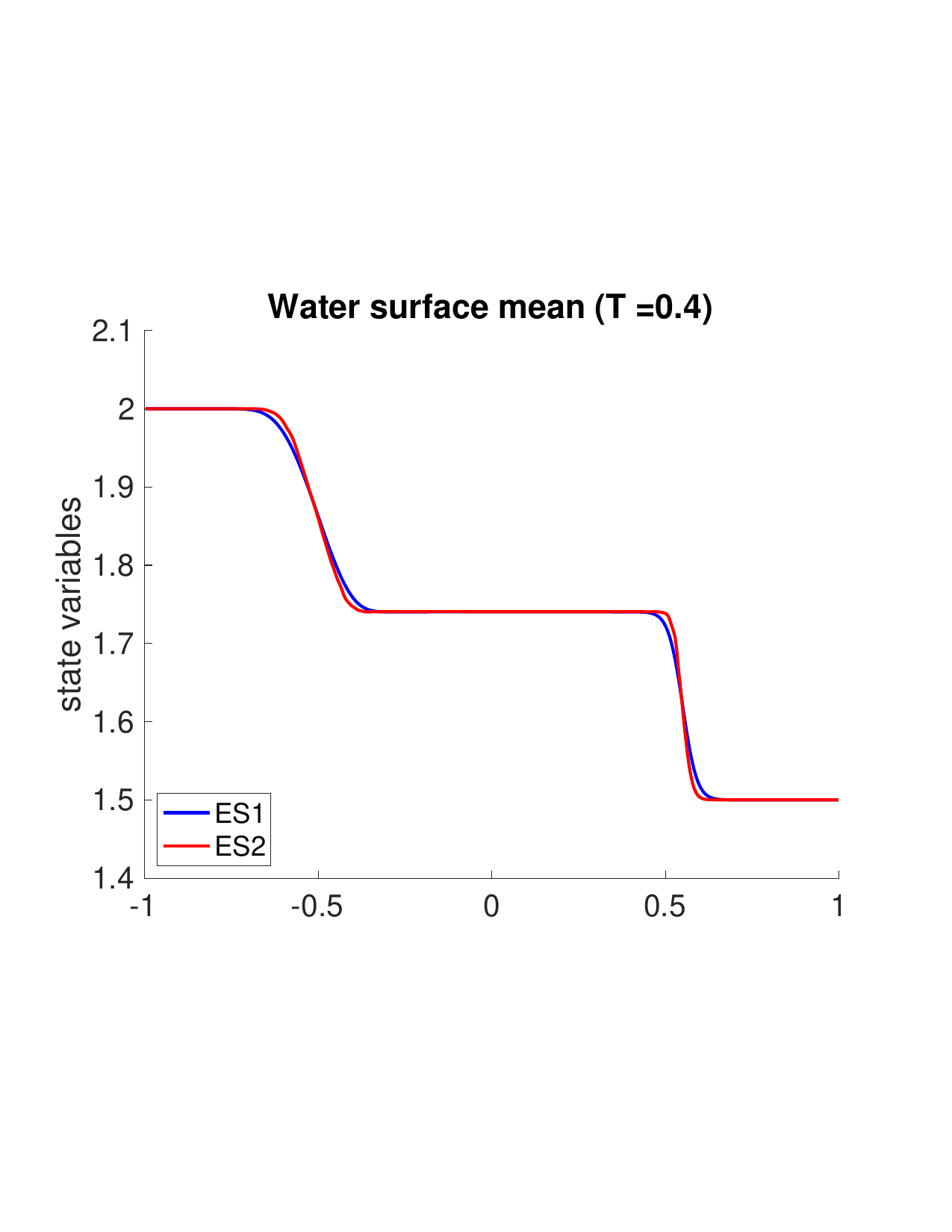}
      \includegraphics[width = .32\textwidth, trim={0 6cm 0 6cm}, clip]{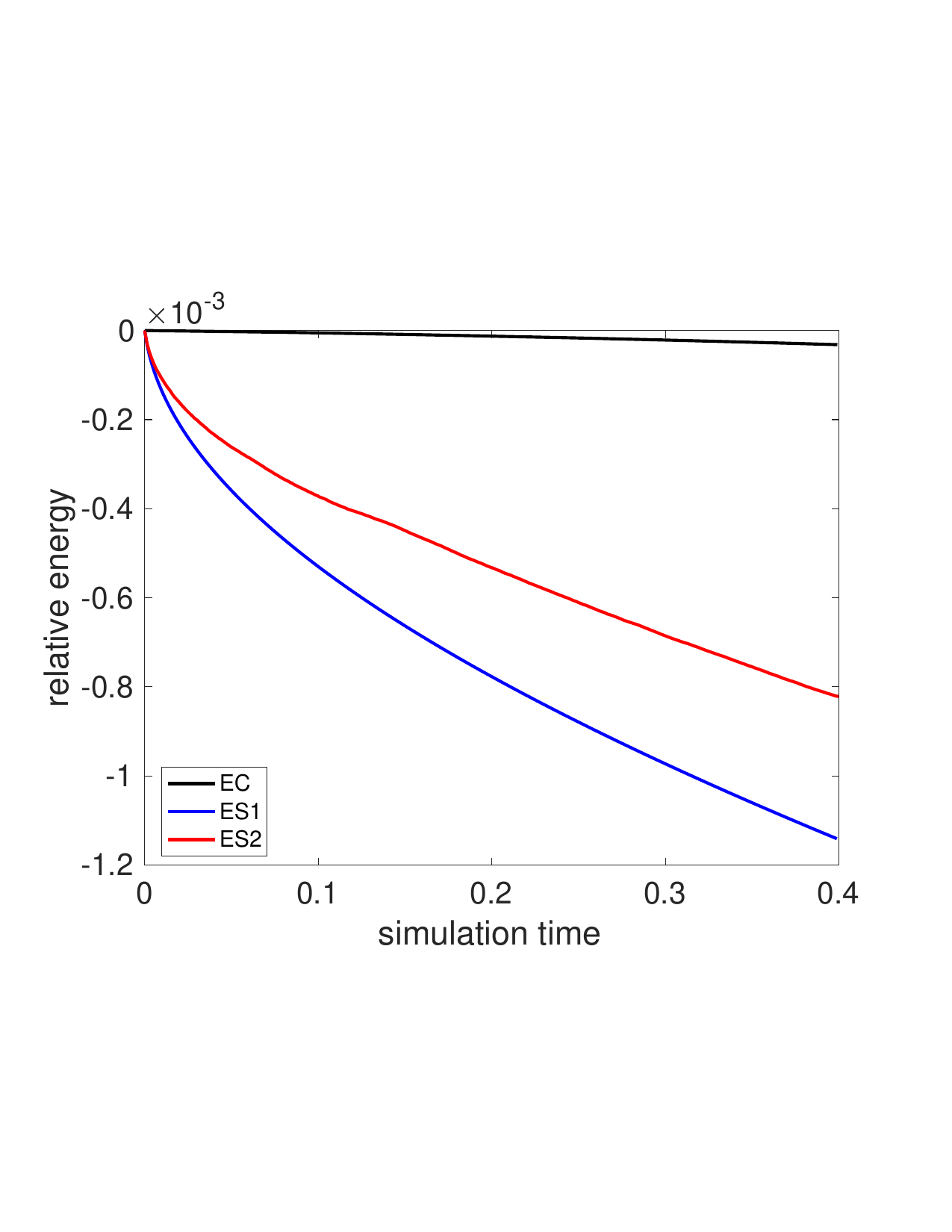}
      \includegraphics[width = .32\textwidth, trim={0 6cm 0 6cm}, clip]{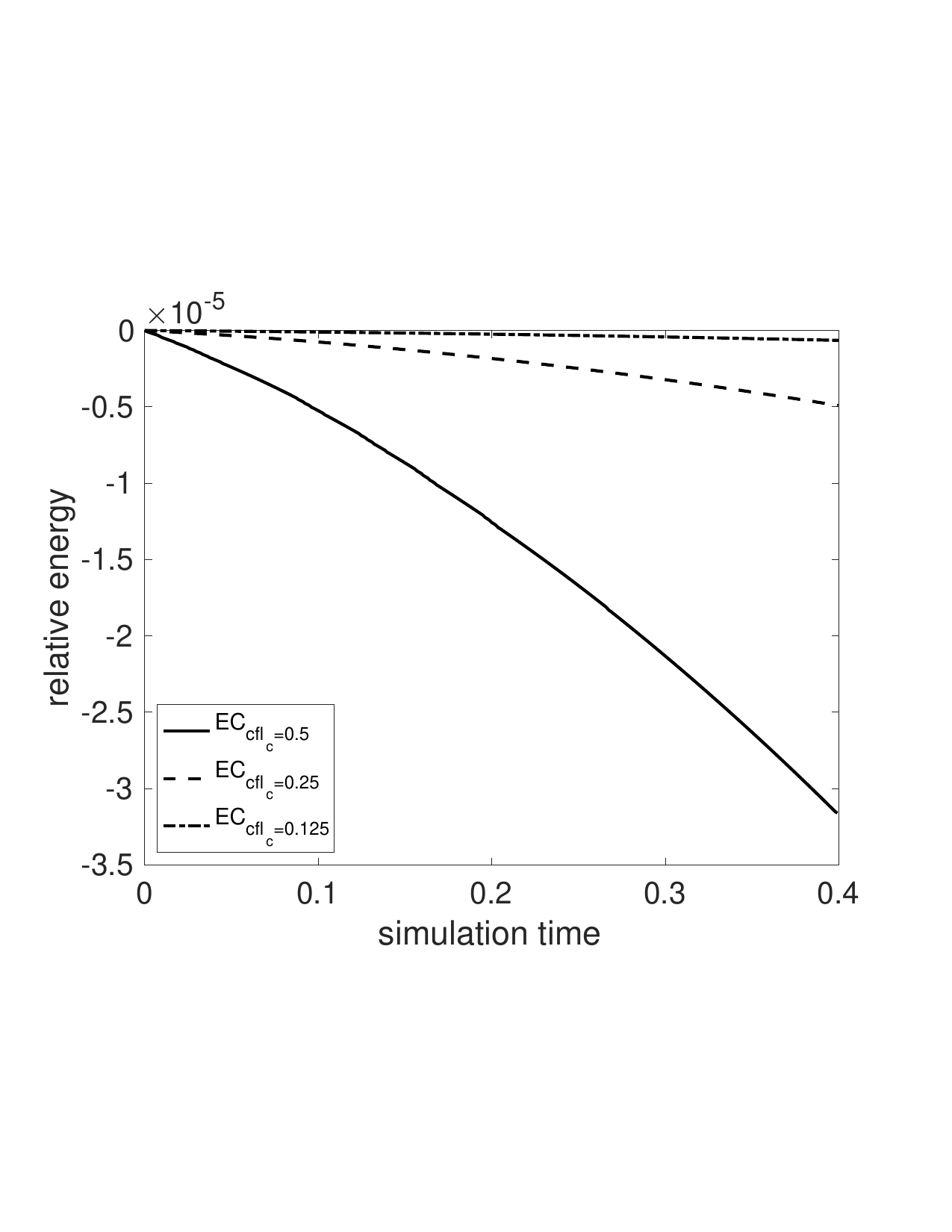}
   % \includegraphics[width =
  %  .32\textwidth]{figure/1d-ex2-es1-N9-nx400-e.png}
   % \includegraphics[width = .32\textwidth]{figure/1d-ex2-N9-nx400-e-cu1.png}    
  %  \includegraphics[width = .32\textwidth]{figure/1d-ex2-N9-nx400-e-cu2.png}    
    \caption{Results for \cref{ssec:results-flat-bot}. Comparison:
      Left - water
      surface mean ES1 vs. ES2. Middle: relative energy change in EC, ES1,
      ES2. Right: relative energy change in EC under different time
      step/CFL constant. Mesh nx = 400 and PC basis functions K=9.}
    \label{fig:ex1a-flat-bot-mean}
  \end{figure}

  %\begin{figure}[htbp]
 %   \centering
 %     \includegraphics[width =
%      .49\textwidth]{new_figure/w_mean_test1a_es1_es2_m400_K9.pdf}
 %       \includegraphics[width =
 %    .49\textwidth]{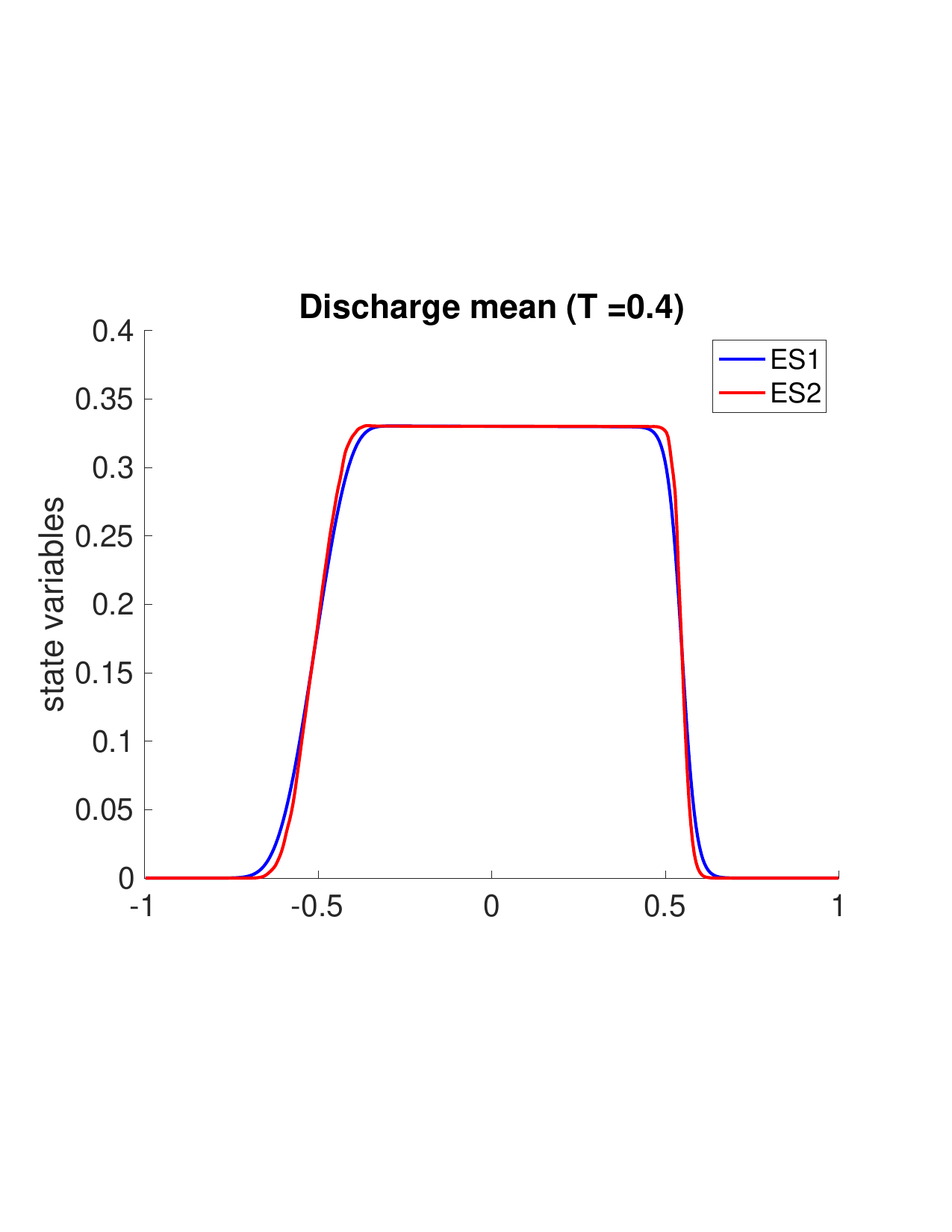}
   % \includegraphics[width =
  %  .32\textwidth]{figure/1d-ex2-es1-N9-nx400-e.png}
   % \includegraphics[width = .32\textwidth]{figure/1d-ex2-N9-nx400-e-cu1.png}    
  %  \includegraphics[width = .32\textwidth]{figure/1d-ex2-N9-nx400-e-cu2.png}    
%    \caption{Results for \cref{ssec:results-flat-bot}. Top: water
%      surface, Middle: discharge: Left: ES1. Middle: ES2. Right: EC. Bottom: relative change in
%      energy. Mesh nx = 400 and PC basis functions K=9.}
 %   \label{fig:ex-flat-bot}
%\end{figure}

% Continue with next test below: Sept 21 2023
  \subsection{Stochastic Bottom Topography}\label{ssec:results-sbt}
Next, we consider the shallow water system with deterministic initial conditions,
\begin{equation*}%\label{eq:IV1}
w(x,0) = \left\{\begin{aligned}&1&&x<0\\ &0.5&&x>0\end{aligned}\right.,\quad q(x,0) = 0,
\end{equation*} 
and with a stochastic bottom topography,
\begin{equation}\label{eq:bottom 1}
B(x,\xi) = \left\{\begin{aligned}0.125(\cos(5\pi x)+2)+0.125\xi,\quad&|x|<0.2\\0.125+0.125\xi,\quad&\text{otherwise.}\end{aligned}\right.
\end{equation}
This test example was presented previously in \cite{doi:10.1137/20M1360736}. Initially, the highest possible
bottom barely touches the initial water surface at $x=0$, see
\cref{fig:ex1-et-m400}-\cref{fig:ex1-et-m1600}. In
\cref{fig:ex1-et-m400}-\cref{fig:ex1-et-m1600}, we use a uniform grid
size $\Delta x =400, 800, 1600$ over the
physical domain $x \in [-1,1]$, and compute up to time $t=0.0995$
(Immediately after this time, the EC scheme fails for an nx=$400$ due to spurious oscillations near
sharp gradients of the solution). In
\cref{fig:ex1-et-m800}-\cref{fig:ex1-et-m1600} we compare only
performance of ES1 and ES2 at $t=0.0995$ since EC fails on those
meshes even earlier. Again, the numerical results indicate that the ES2 scheme can more easily resolve large, spatially-concentrated variance values compared to the ES1 scheme, but under mesh refinement both schemes converge to
similar numerical solutions. In \cref{fig:ex1-m800}, we show numerical
solution obtained using ES1 and ES2 at the final time $t=0.8$ and on mesh
$nx=800$. For both schemes,  the $99\%$ confidence region of the water
surface stays above the $99\%$ confidence region of the bottom
function in \cref{fig:ex1-m800}, and both methods produce similar numerical
solutions. The presented results are comparable to the results in
\cite[Section 5.1]{doi:10.1137/20M1360736}. In \cref{fig:ex1-rel_E}, we again observe as expected that
the EC scheme numerically conserves energy, while ES1 and ES2
dissipate energy with larger dissipation produced by ES1 method.
\begin{figure}[htbp]
    \centering
    \includegraphics[width = .32\textwidth, trim={0 6cm 0 6cm}, clip]{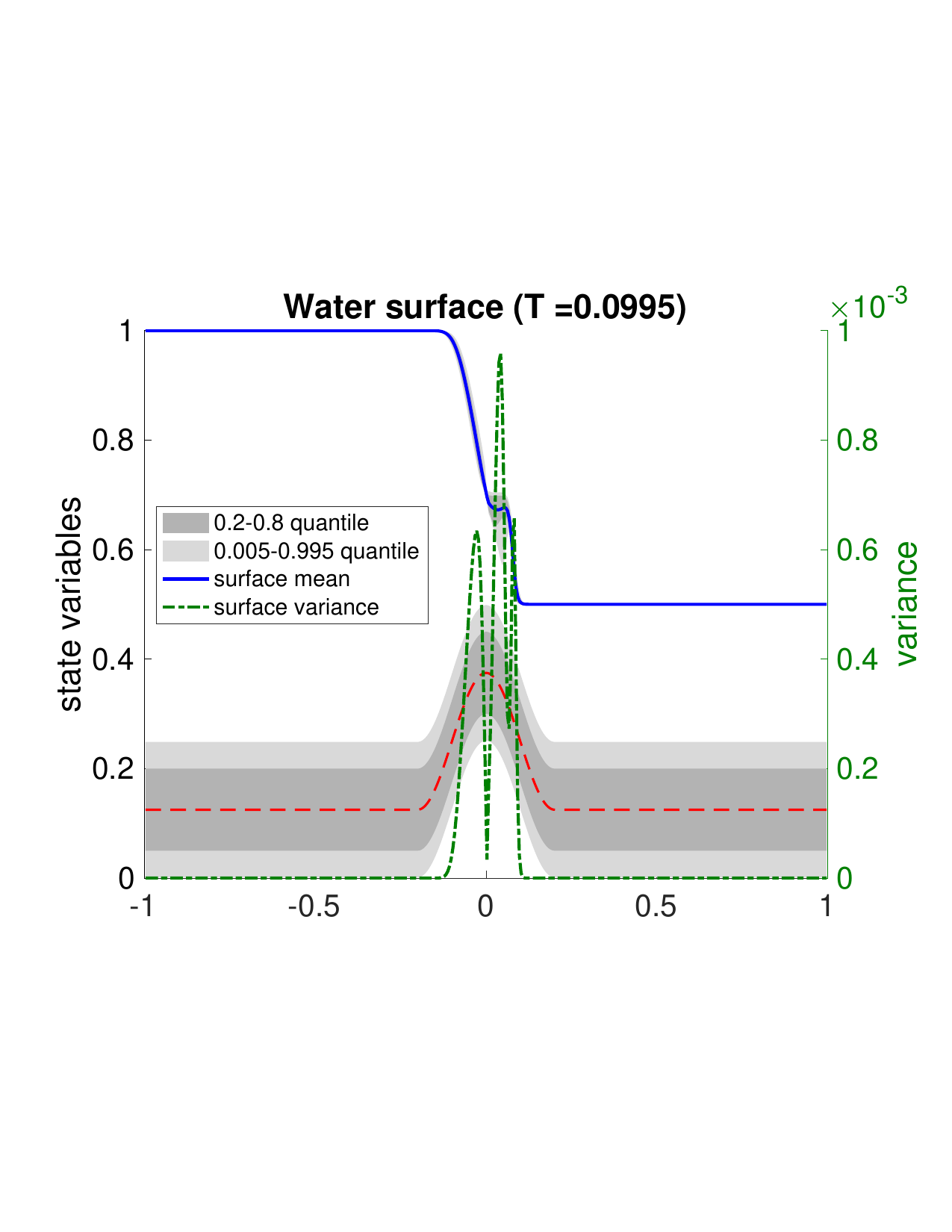}
    \includegraphics[width = .32\textwidth, trim={0 6cm 0 6cm}, clip]{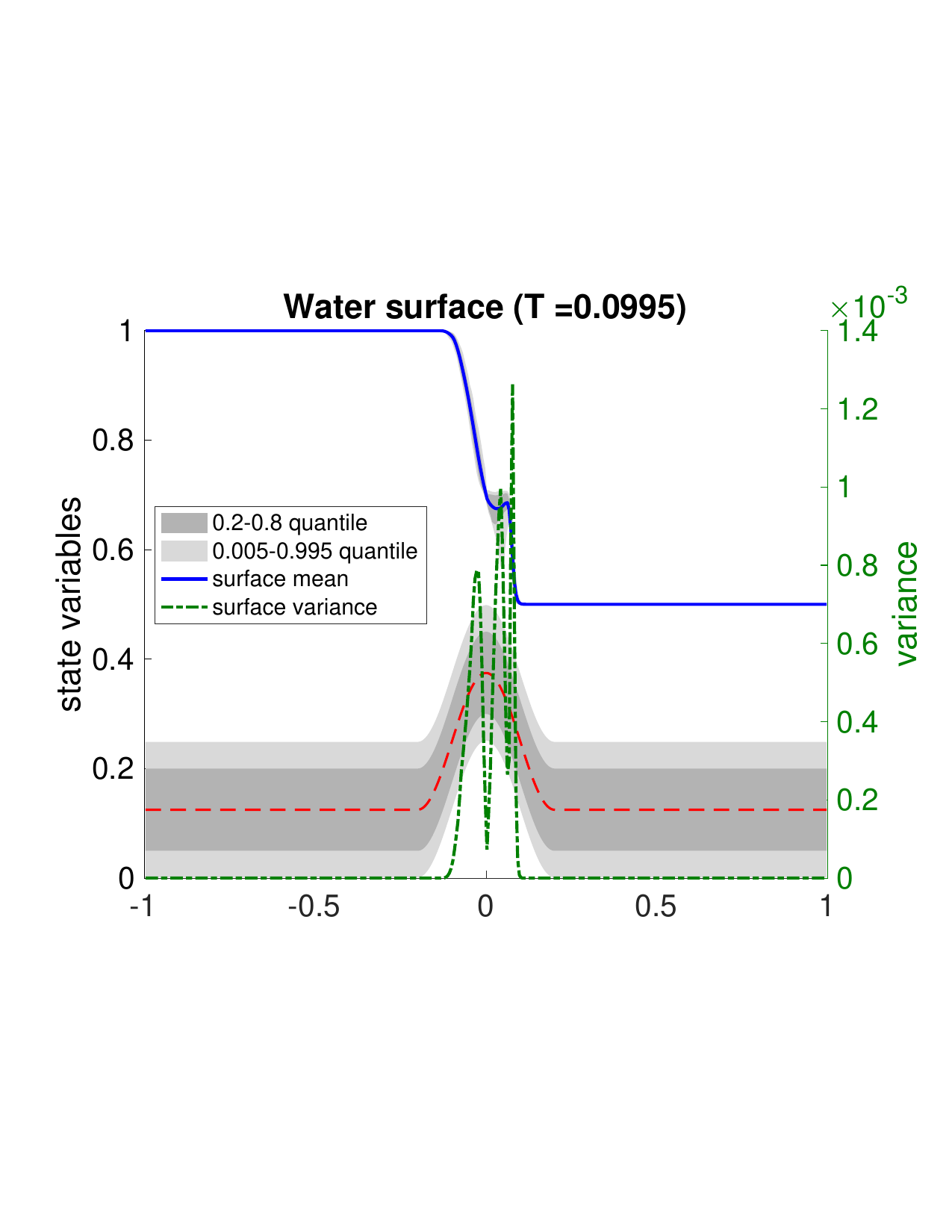}    
    \includegraphics[width = .32\textwidth, trim={0 6cm 0 6cm}, clip]{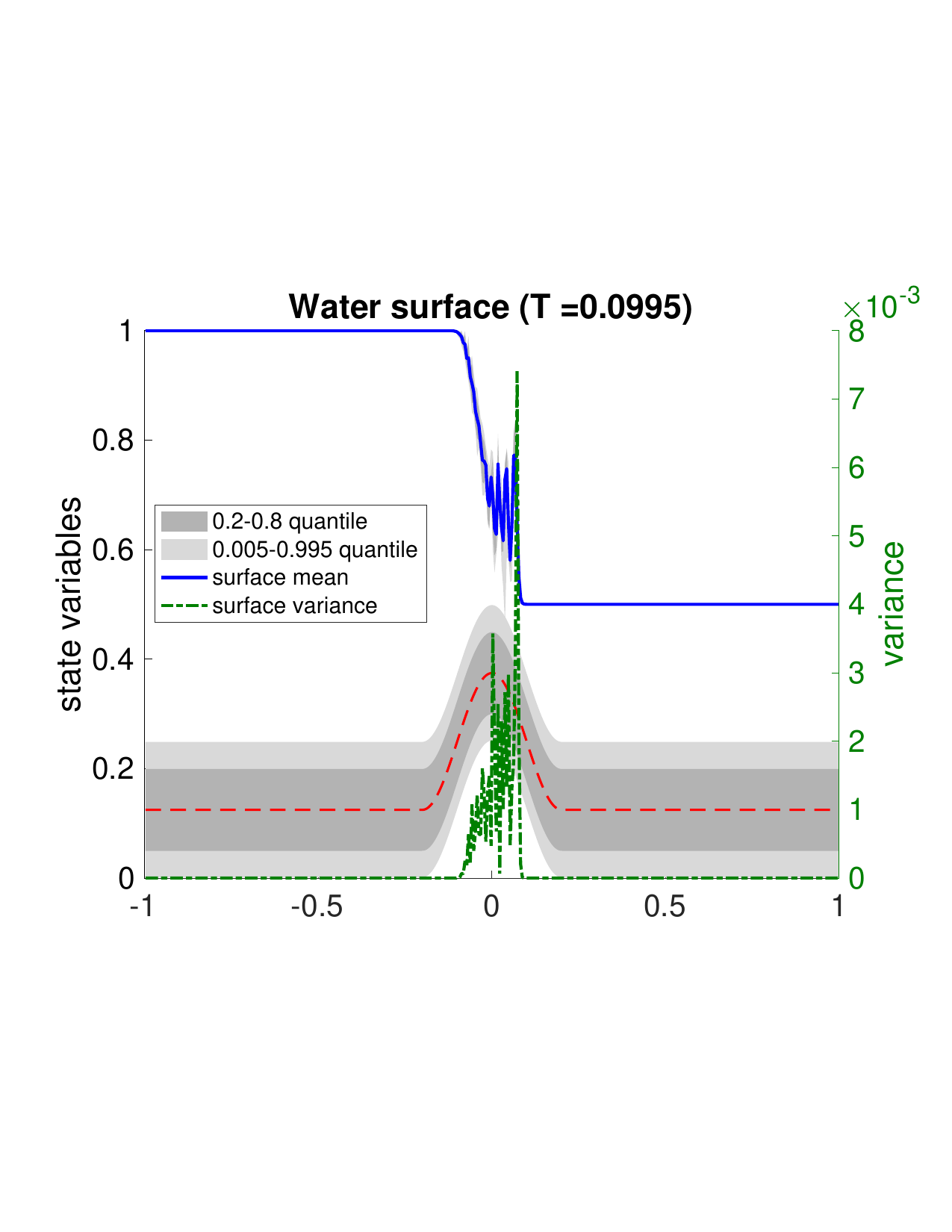}    
    \\
    \includegraphics[width = .32\textwidth, trim={0 6cm 0 6cm}, clip]{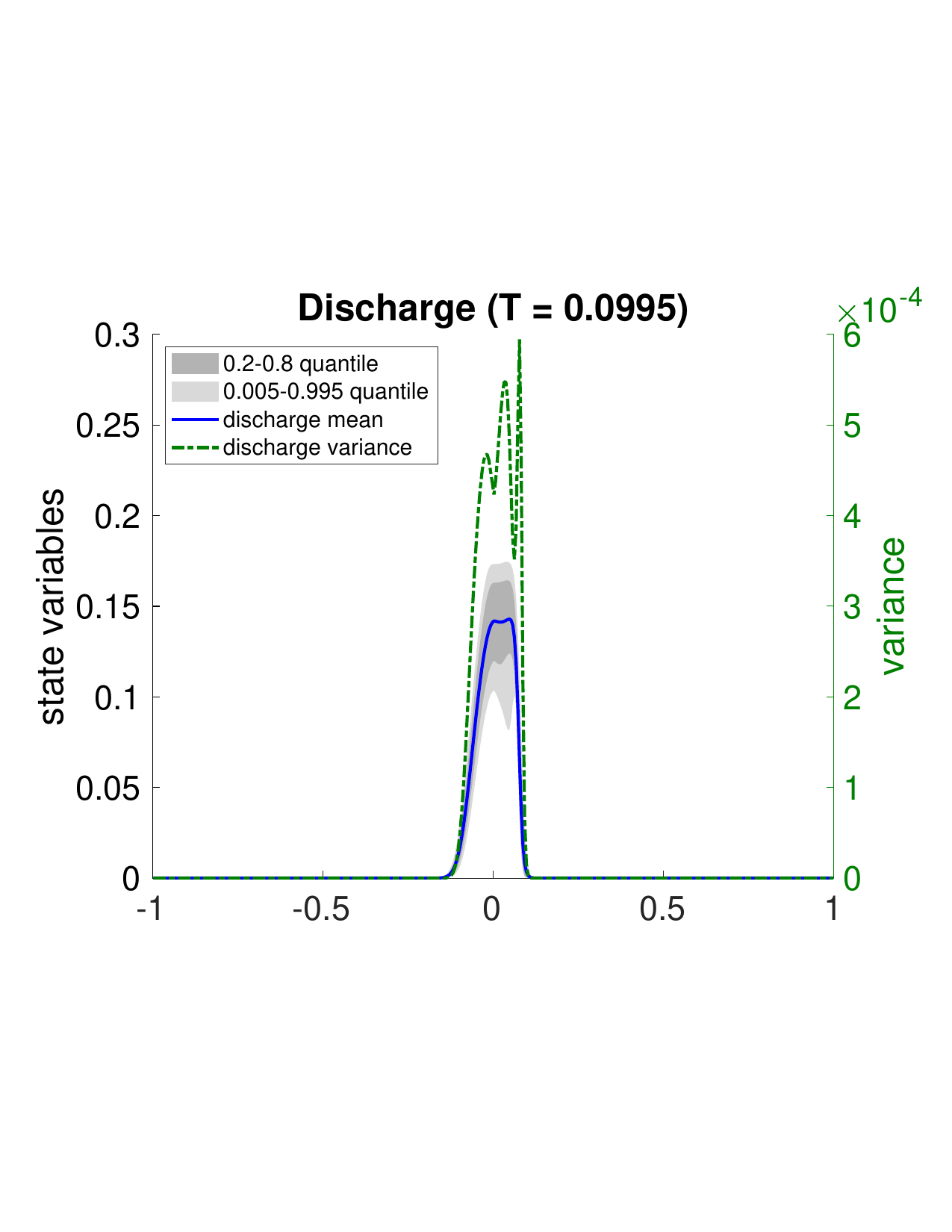}
    \includegraphics[width = .32\textwidth, trim={0 6cm 0 6cm}, clip]{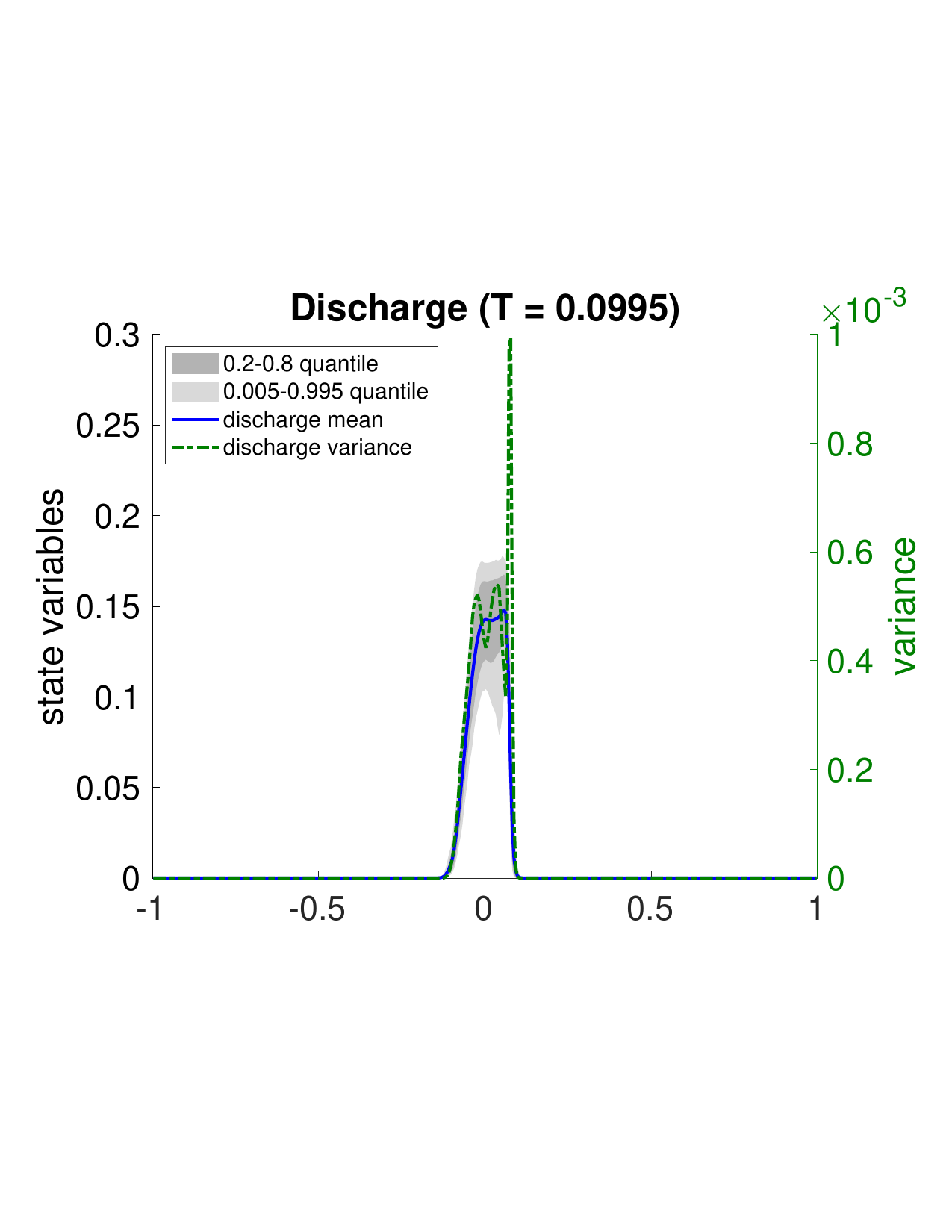}    
    \includegraphics[width = .32\textwidth, trim={0 6cm 0 6cm}, clip]{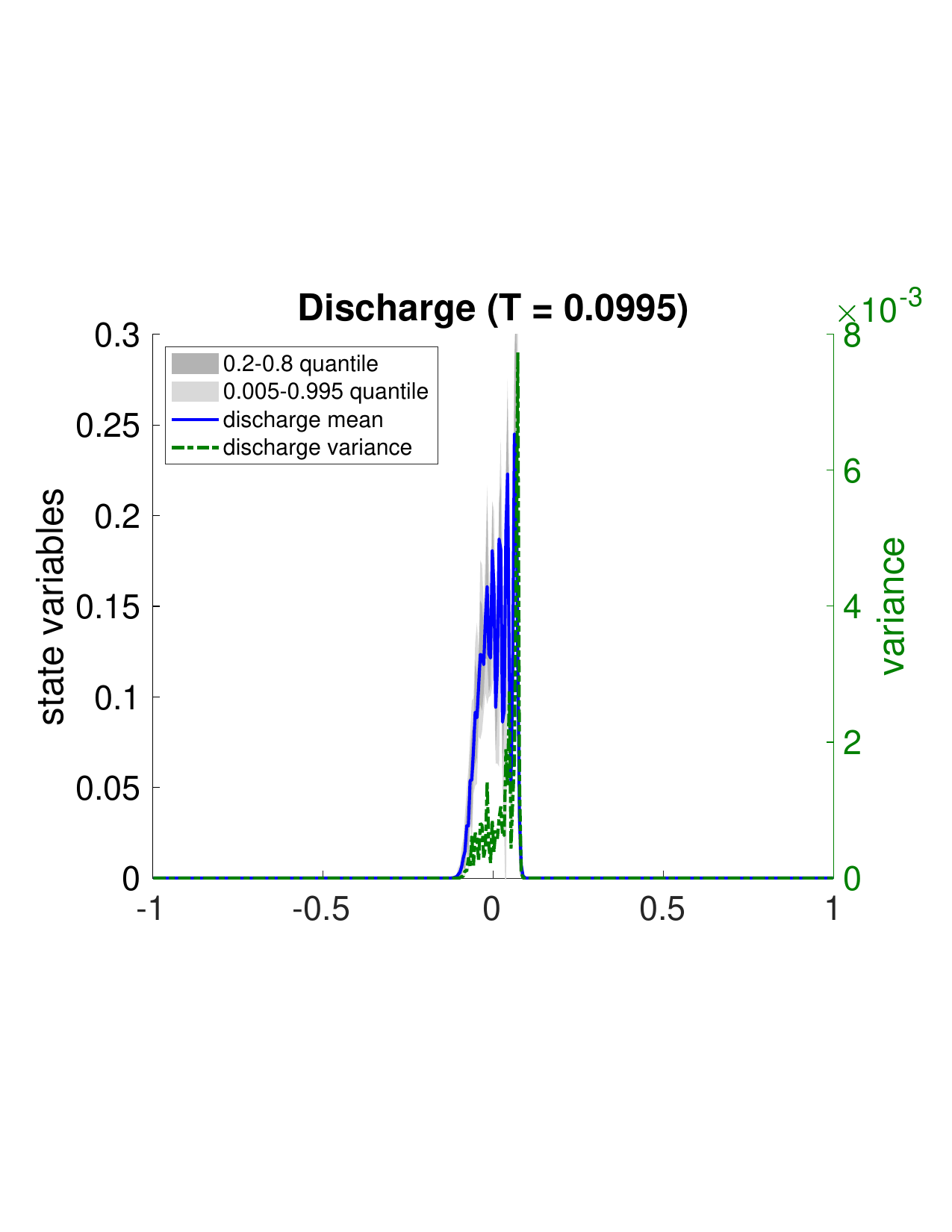}   
    \caption{Comparison of the results for \cref{ssec:results-sbt} using
      different schemes. Top: water surface. Bottom: discharge. Left:
      ES1, Middle: ES2, Right: EC. Mesh $nx=400$ with $K=9$ at earlier
    time $T=0.0995$.}
    \label{fig:ex1-et-m400}
  \end{figure}
  \begin{figure}[htbp]
    \centering
    \includegraphics[width = .49\textwidth, trim={0 6cm 0 6cm}, clip]{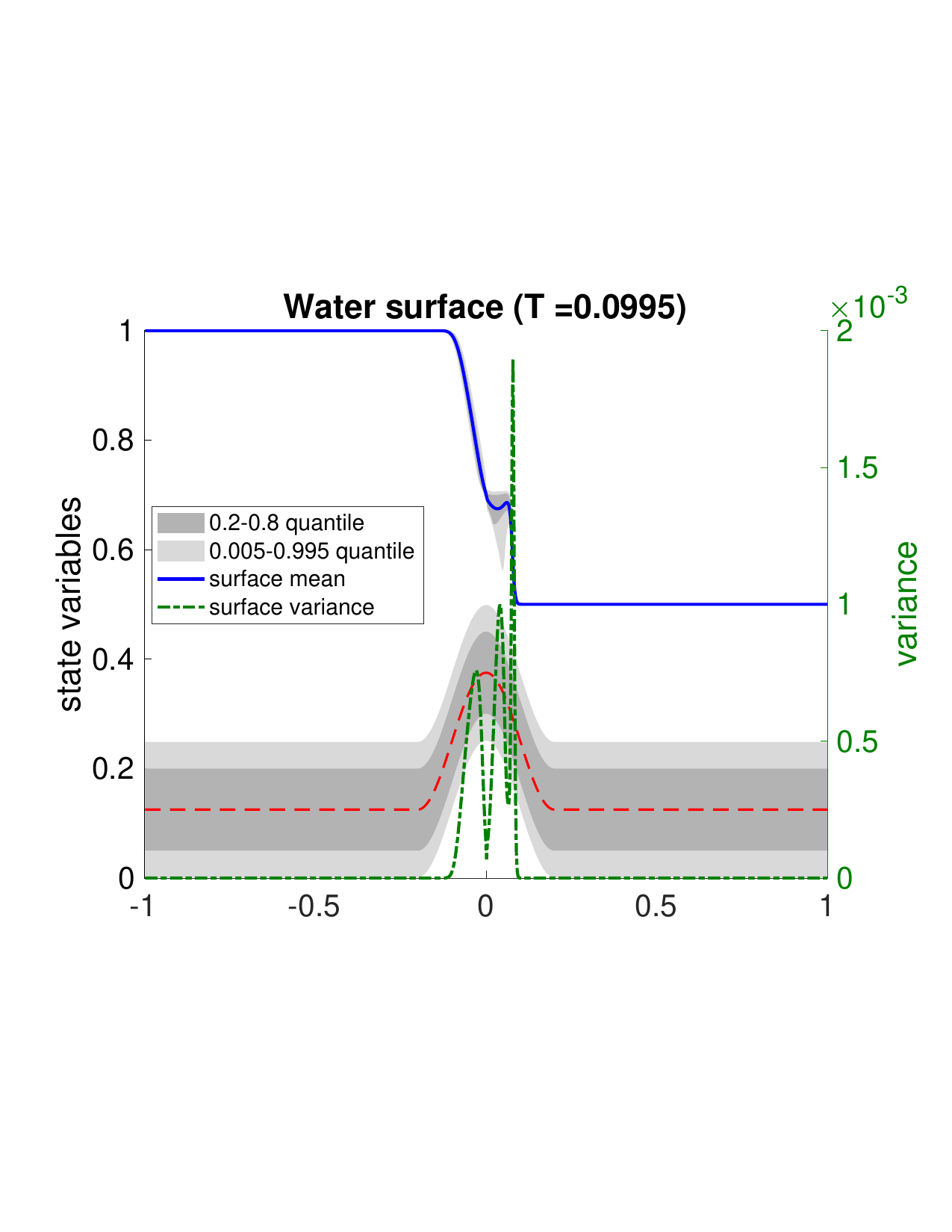}
    \includegraphics[width = .49\textwidth, trim={0 6cm 0 6cm}, clip]{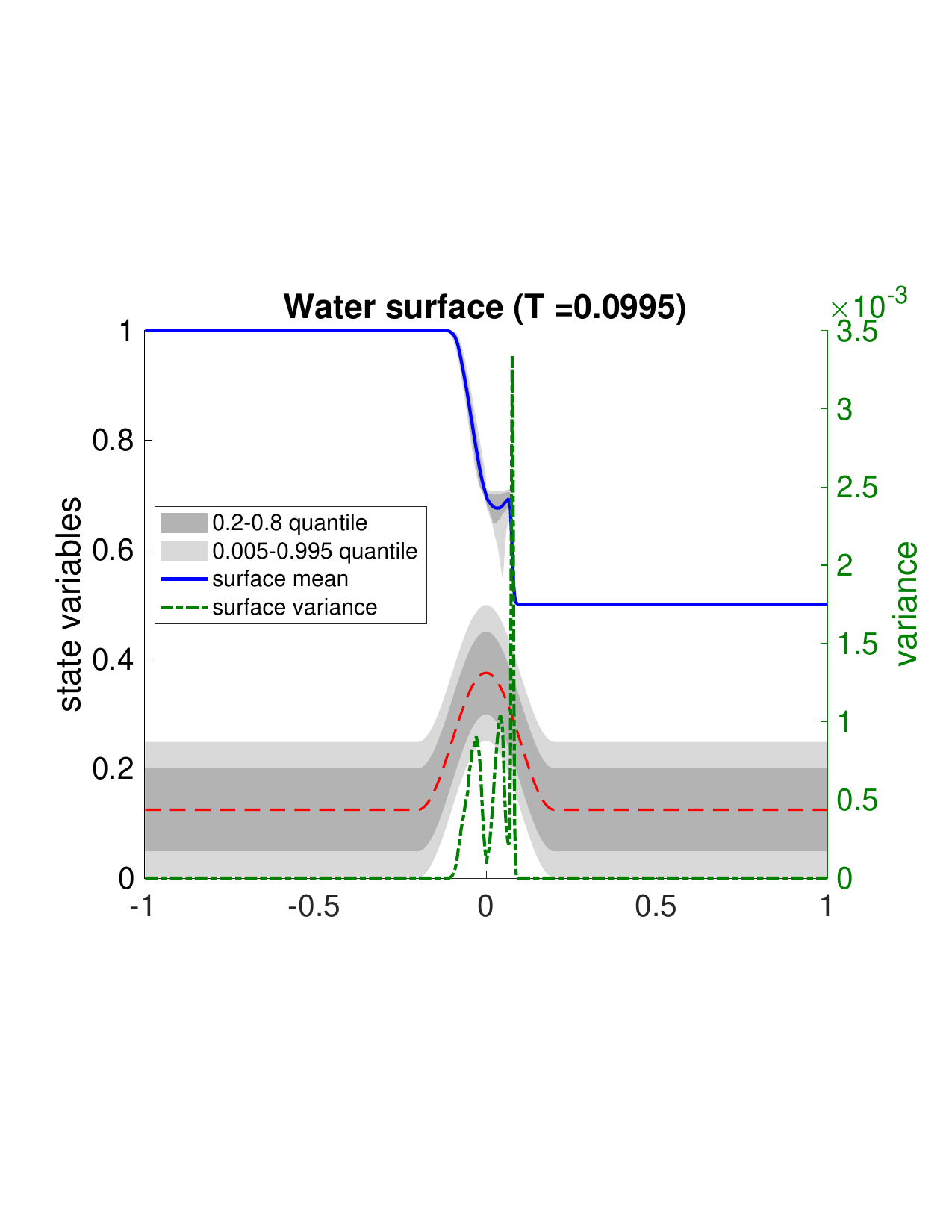}   
    \\
  \includegraphics[width = .49\textwidth, trim={0 6cm 0 6cm}, clip]{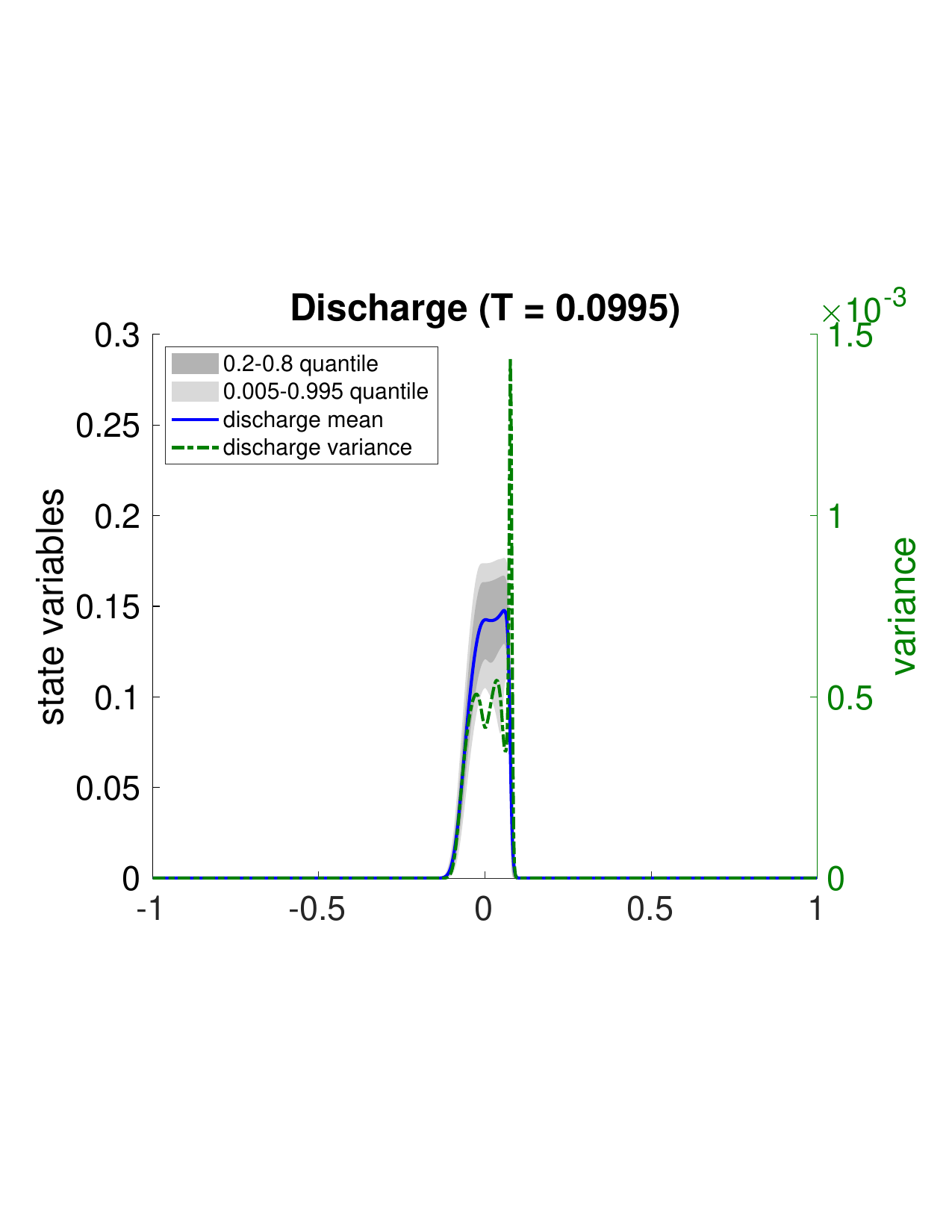}
    \includegraphics[width = .49\textwidth, trim={0 6cm 0 6cm}, clip]{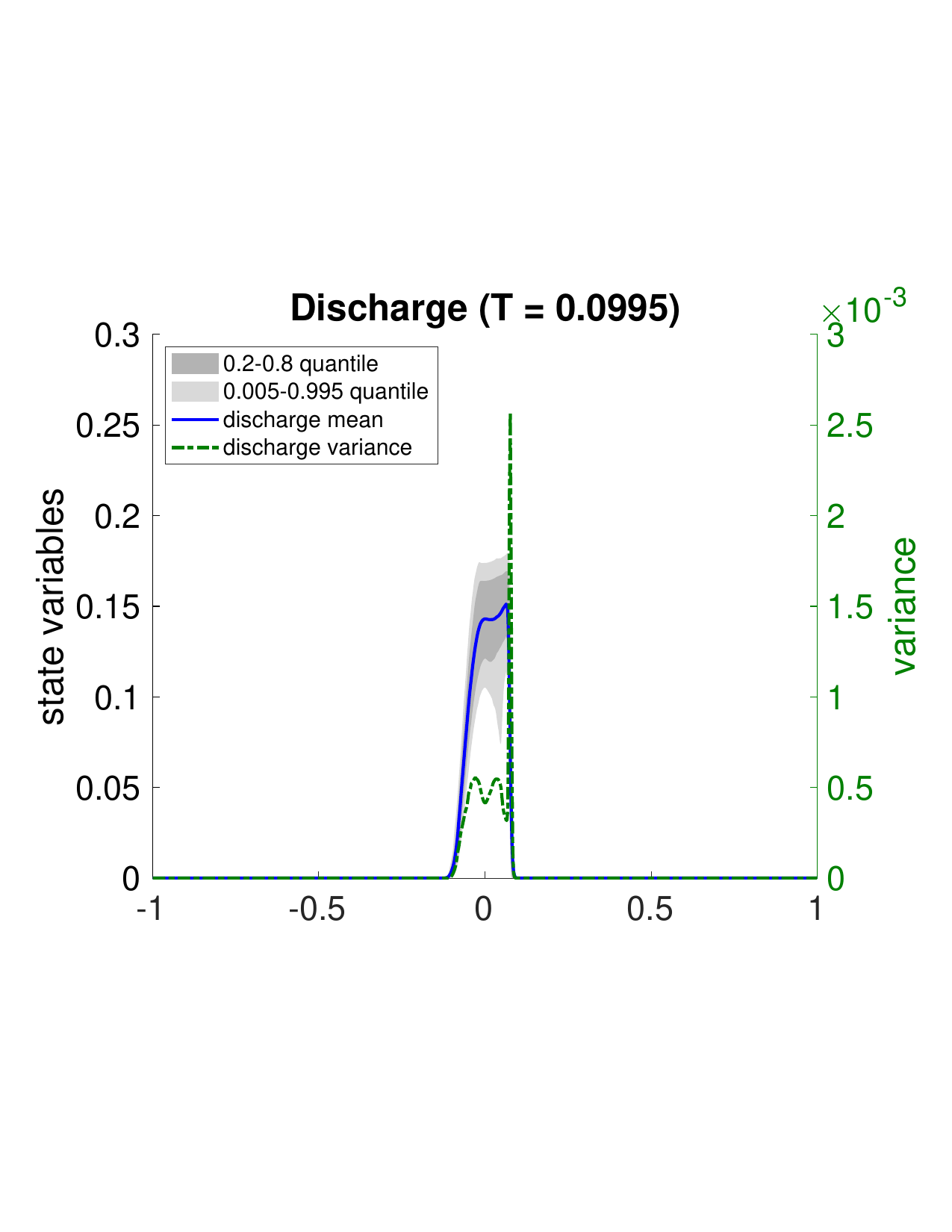}   
    \caption{Comparison of the results for \cref{ssec:results-sbt} using
      different schemes. Top: water surface. Bottom: discharge. Left:
      ES1, Right: ES2. Mesh $nx=800$ with $K=9$ at earlier
    time $T=0.0995$.}
    \label{fig:ex1-et-m800}
  \end{figure}
  \begin{figure}[htbp]
    \centering
    \includegraphics[width = .49\textwidth, trim={0 6cm 0 6cm}, clip]{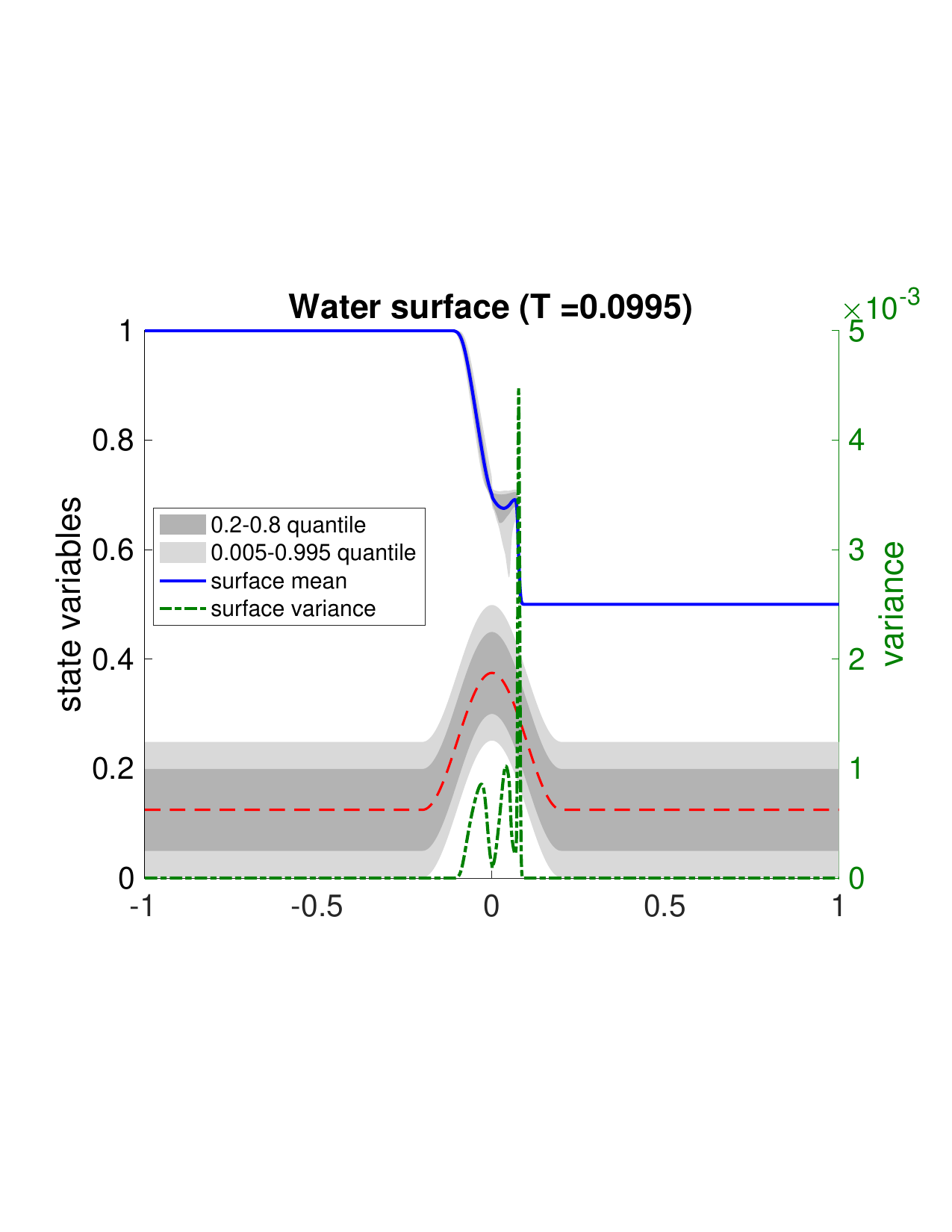}
    \includegraphics[width = .49\textwidth, trim={0 6cm 0 6cm}, clip]{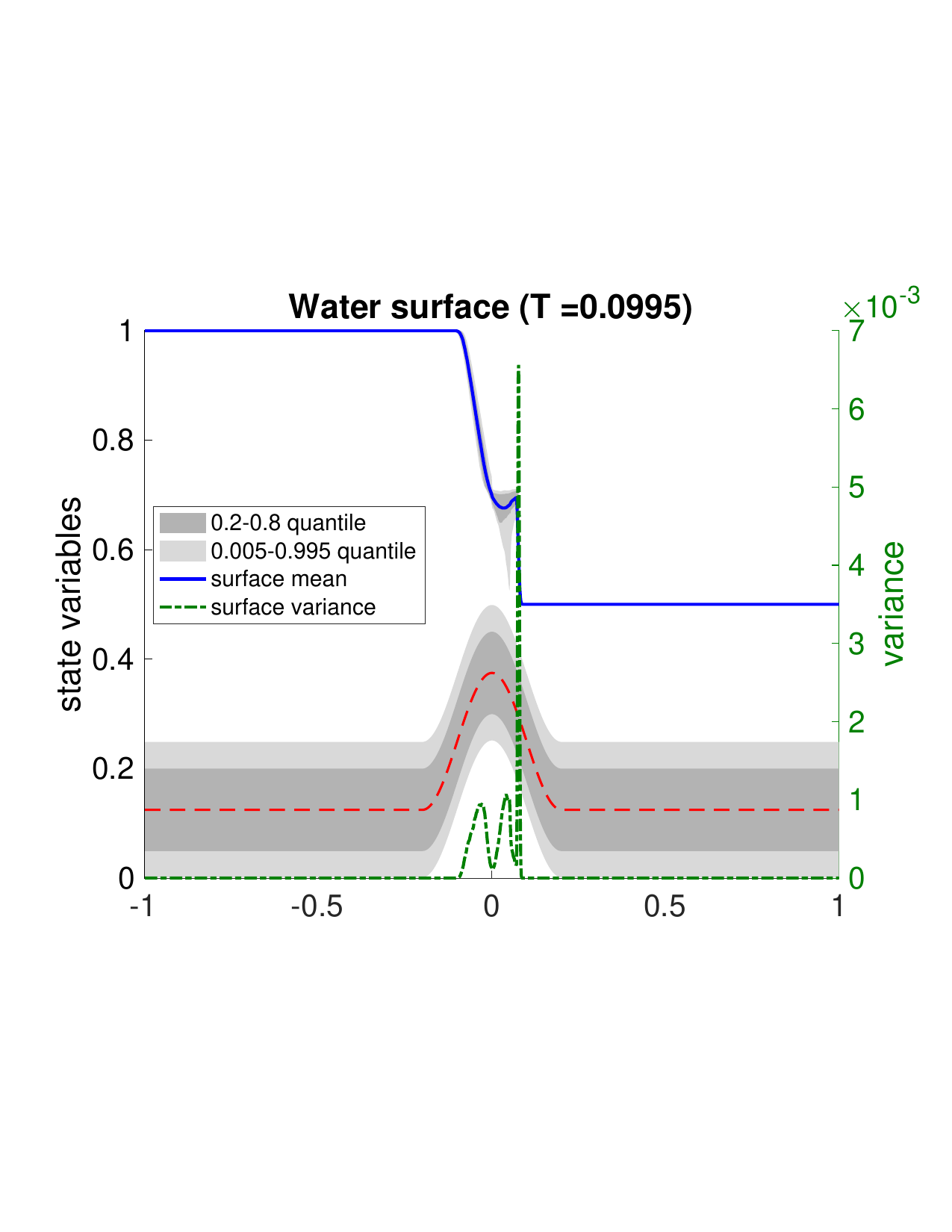}  
    \\
  \includegraphics[width = .49\textwidth, trim={0 6cm 0 6cm}, clip]{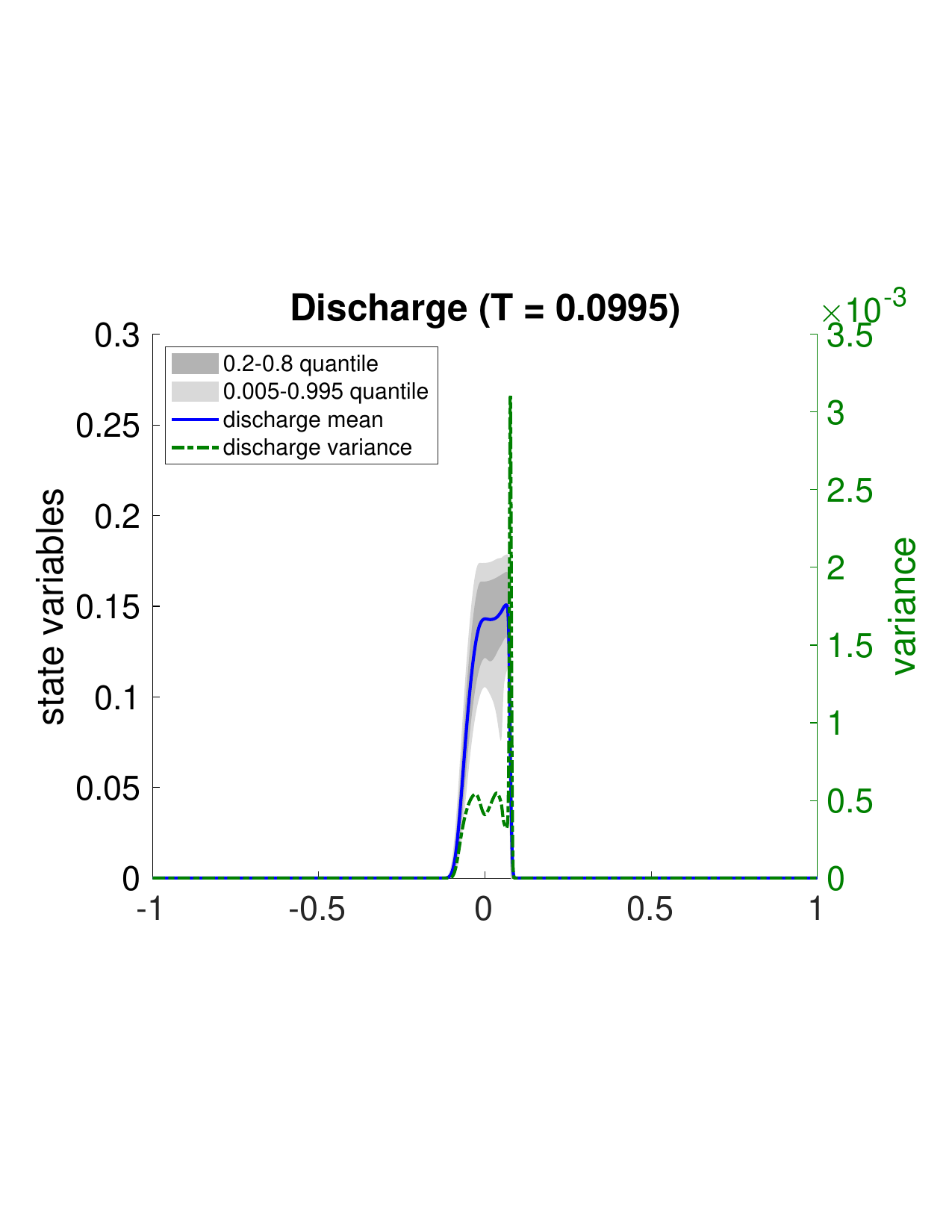}
    \includegraphics[width = .49\textwidth, trim={0 6cm 0 6cm}, clip]{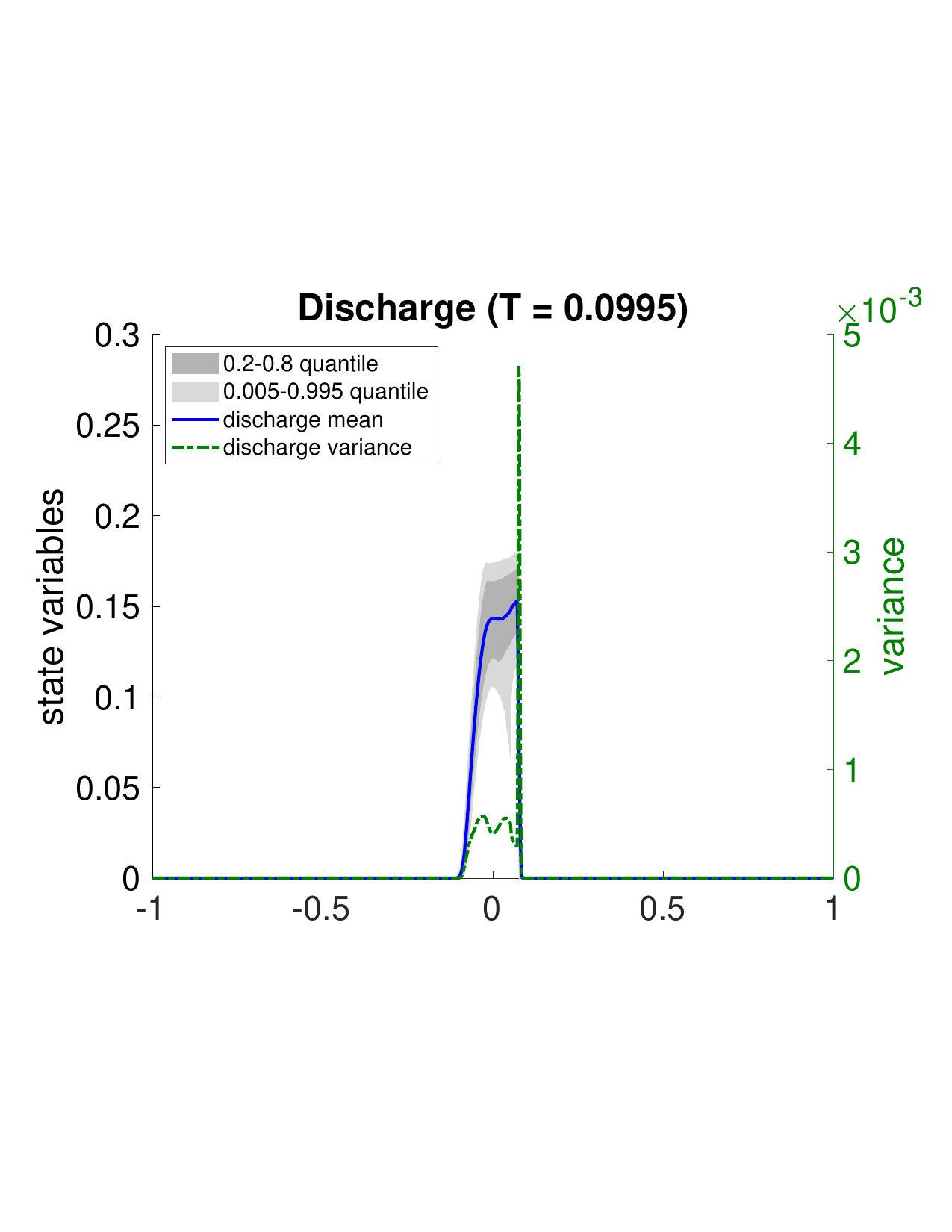}   
    \caption{Comparison of the results for \cref{ssec:results-sbt} using
      different schemes. Top: water surface. Bottom: discharge. Left:
      ES1, Right: ES2. Mesh $nx=1600$ with $K=9$ at earlier
    time $T=0.0995$.}
    \label{fig:ex1-et-m1600}
  \end{figure}
  \begin{figure}[htbp]
    \centering
    \includegraphics[width = .49\textwidth, trim={0 6cm 0 6cm}, clip]{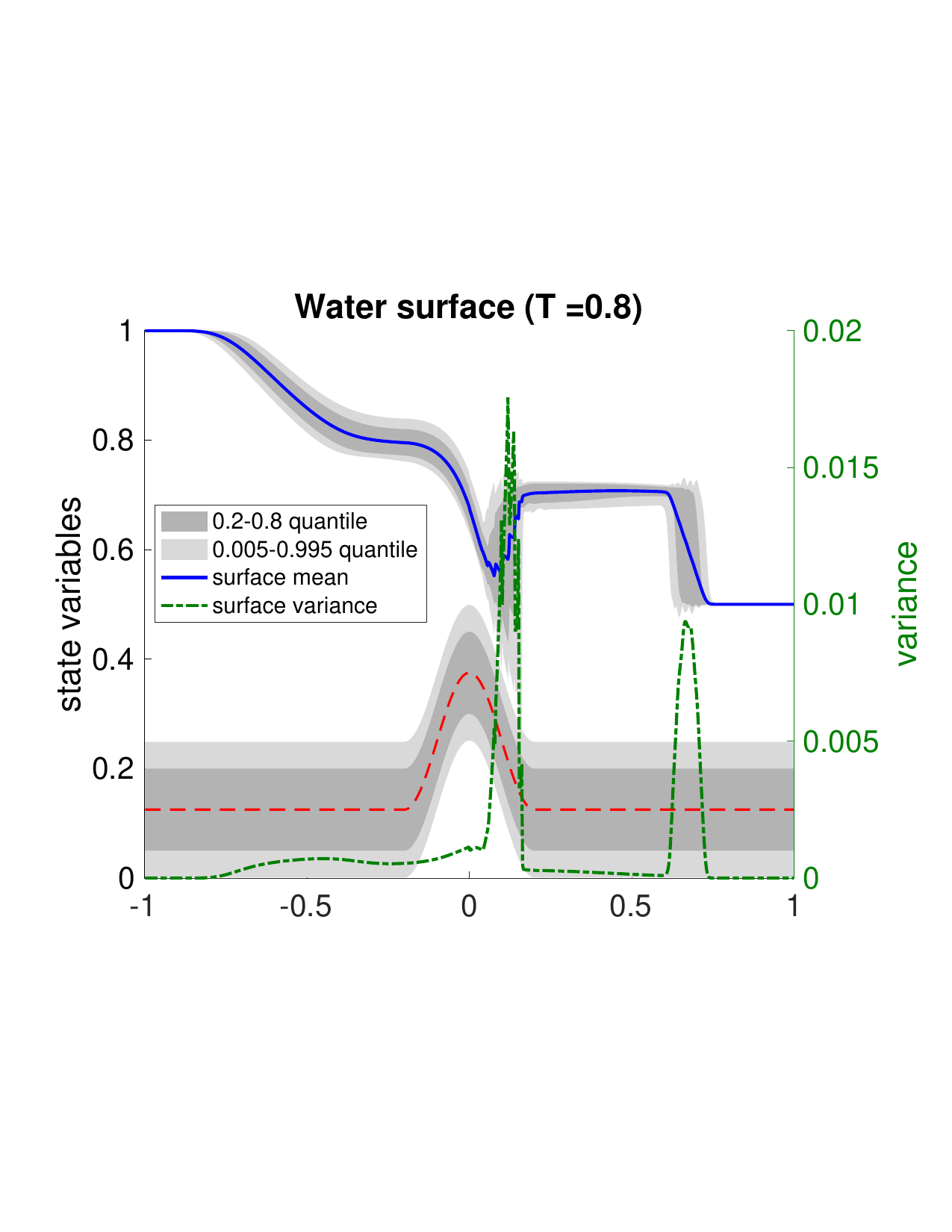}
    \includegraphics[width = .49\textwidth, trim={0 6cm 0 6cm}, clip]{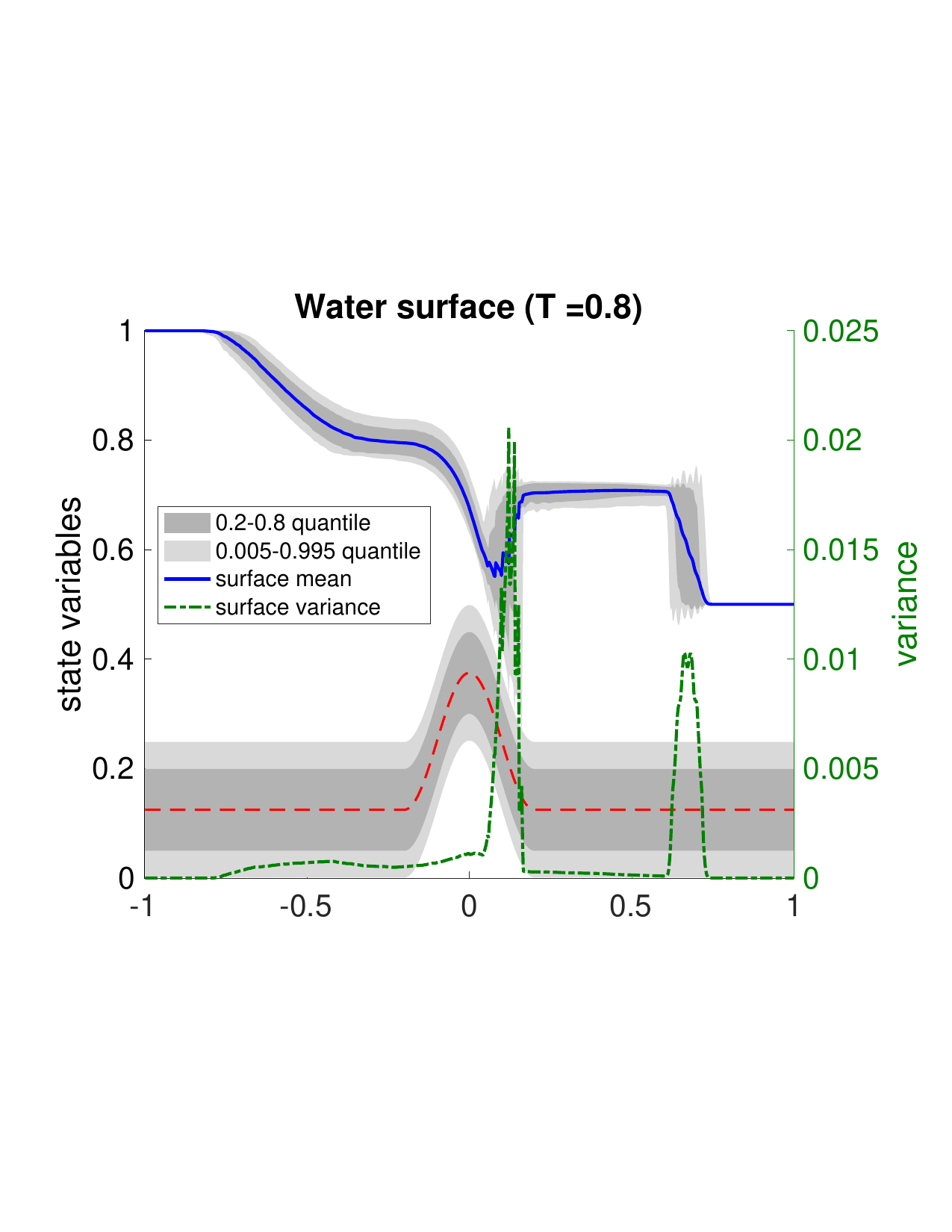}
    \\
  \includegraphics[width = .49\textwidth, trim={0 6cm 0 6cm}, clip]{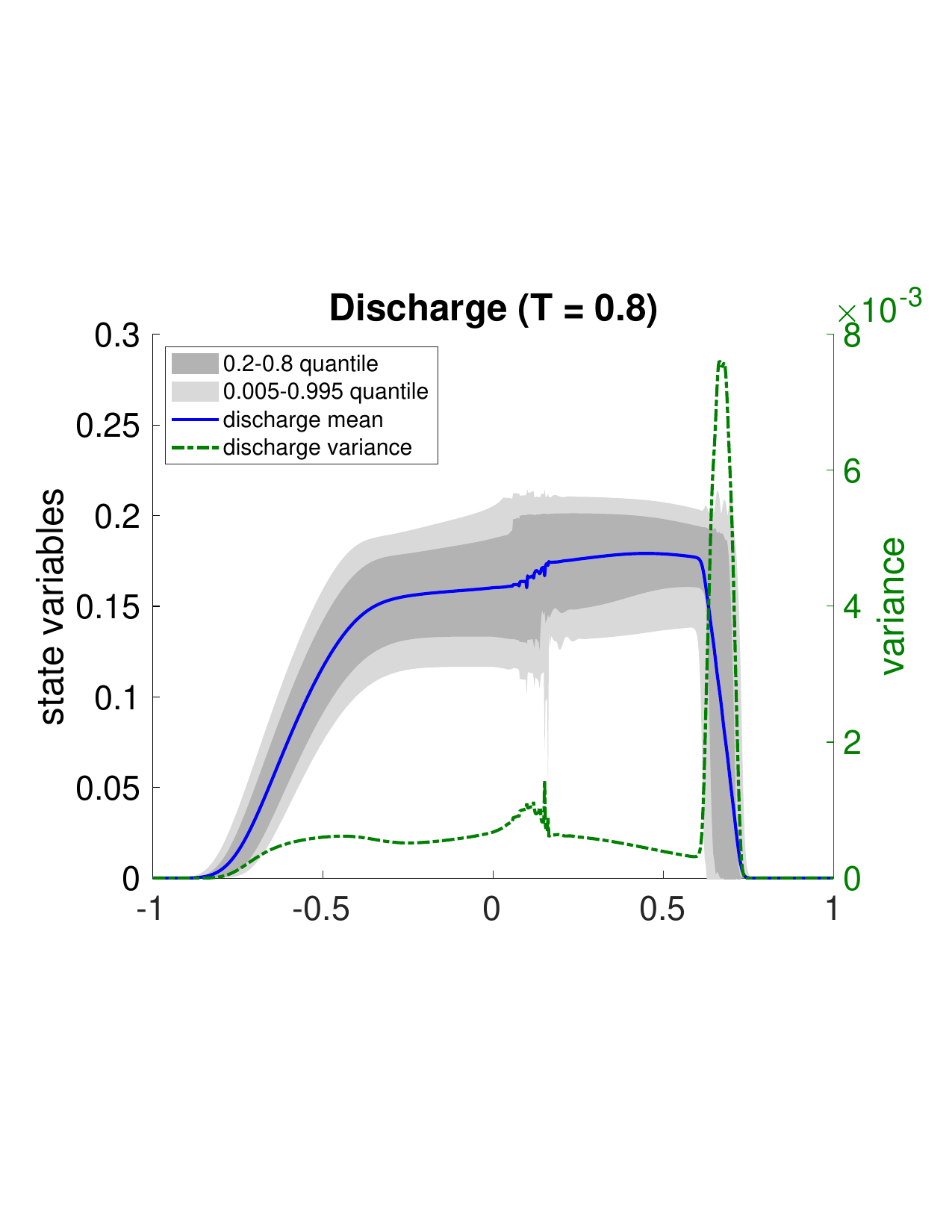}
    \includegraphics[width = .49\textwidth, trim={0 6cm 0 6cm}, clip]{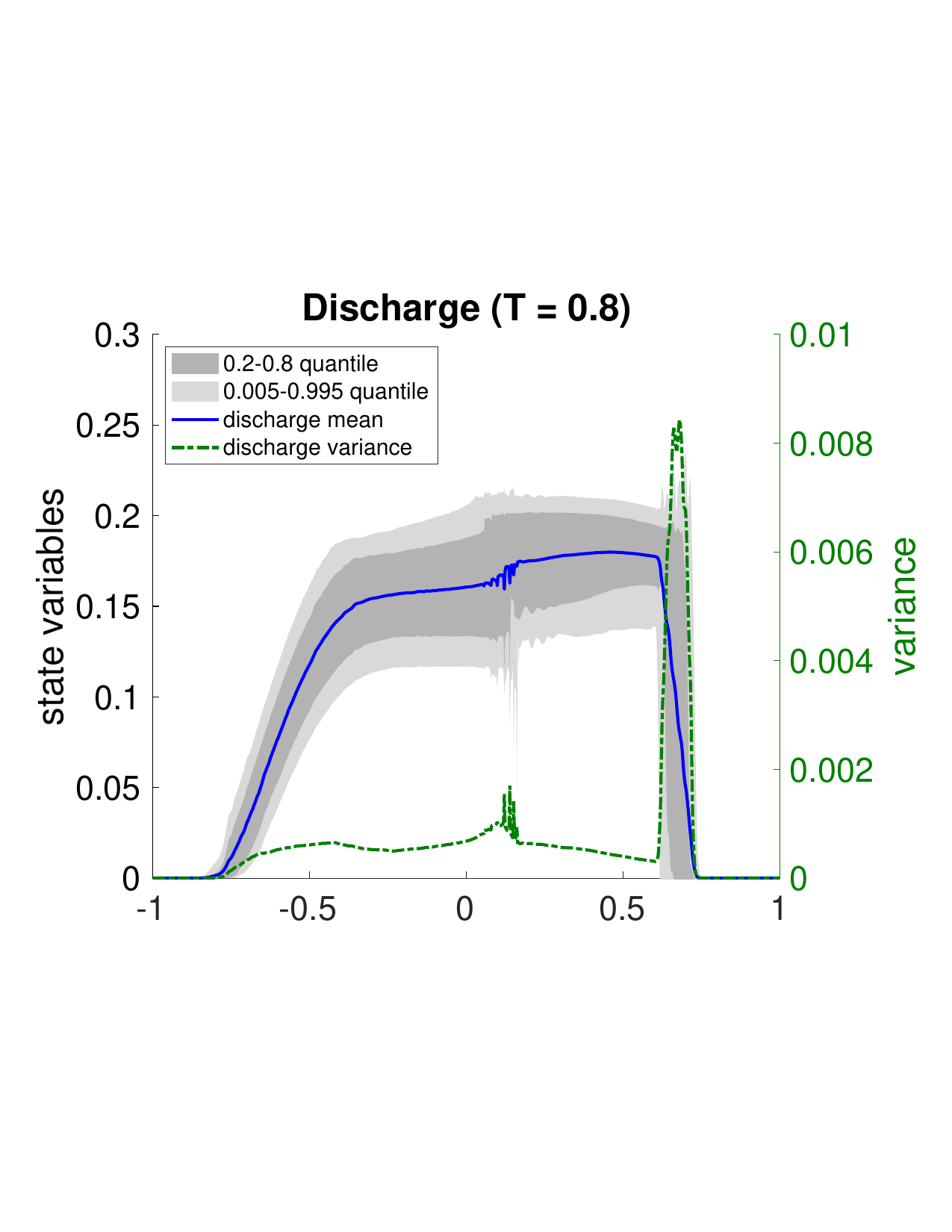}
    \caption{Comparison of the results for \cref{ssec:results-sbt} using
      different schemes. Top: water surface. Bottom: discharge. Left:
      ES1, and Right: ES2.  Mesh $nx=800$ with $K=9$ at the final
    time $T=0.8$.}
    \label{fig:ex1-m800}
  \end{figure}
  \begin{figure}[htbp]
    \centering
     \includegraphics[width = .49\textwidth, trim={0 6cm 0 6cm}, clip]{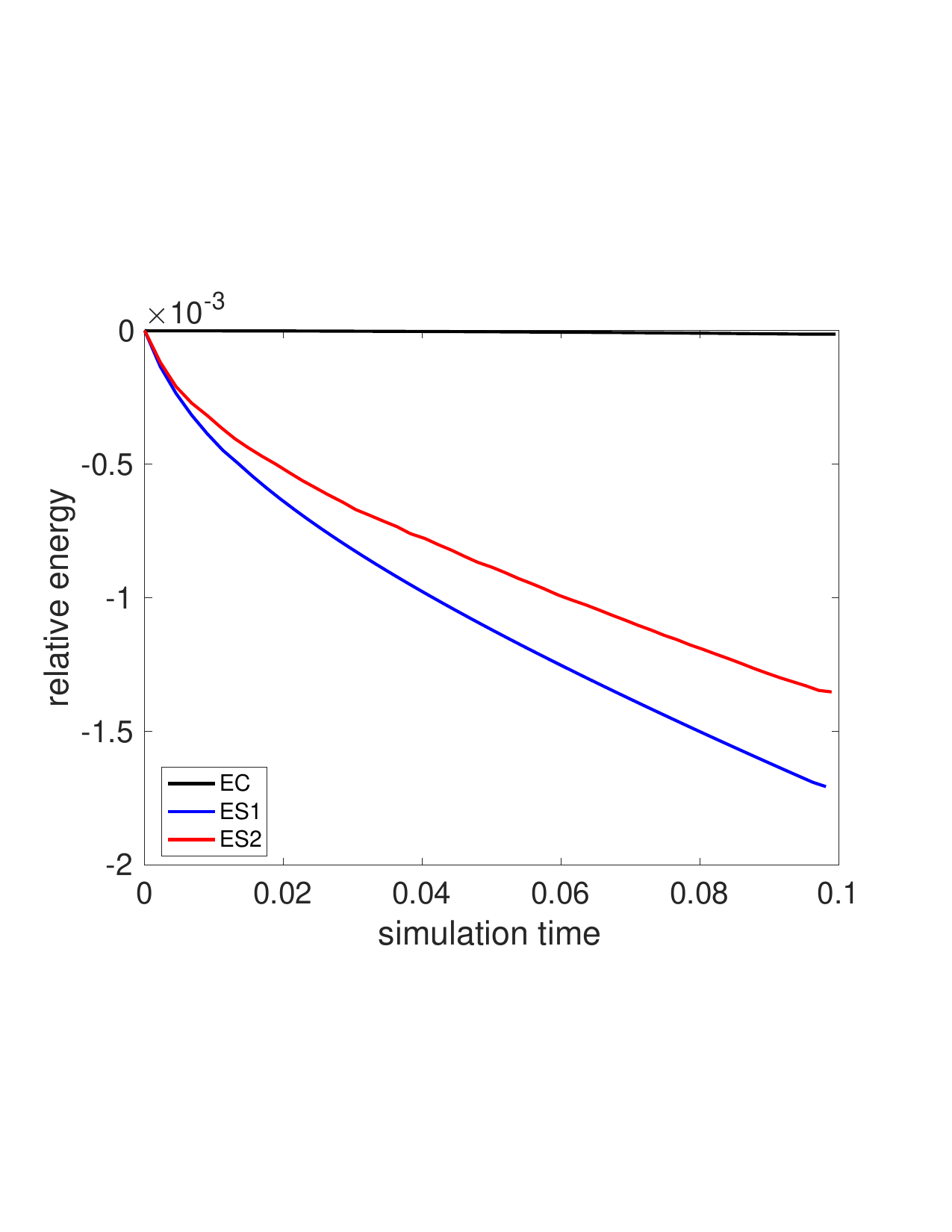}
       \includegraphics[width = .49\textwidth, trim={0 6cm 0 6cm}, clip]{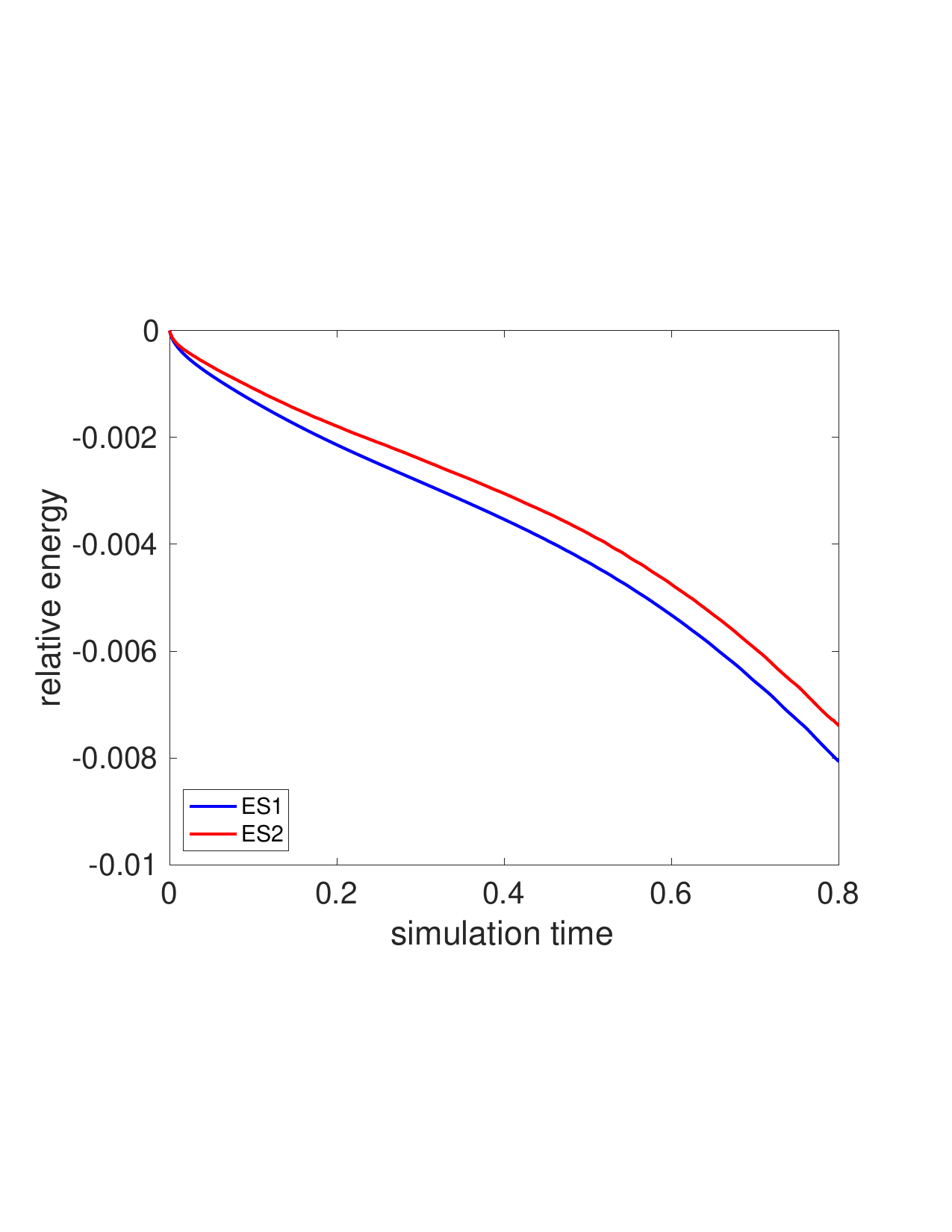}
    \caption{Comparison of the results for \cref{ssec:results-sbt} using
      different schemes. Relative energy change: Left: EC vs. ES1
    vs. ES2 on mesh $nx=400$ on time interval $[0, 0.0995]$. Right:
    ES1 vs. ES2 on mesh $nx=800$ on time interval $[0, 0.8]$.}
    \label{fig:ex1-rel_E}
\end{figure}
\subsection{Perturbation to Lake at Rest}\label{ssec:wbperturb2}
As a final example, we consider the shallow water system with stochastic water surface,
\begin{equation}\label{eq:IV-wbperturb1}
w(x,0,\xi) = \left\{\begin{aligned}&1+0.001(\xi + 1)&&\vert x\vert \le 0.05\\ &1&&\text{otherwise}\end{aligned}\right.,\quad q(x,0,\xi) = 0,
\end{equation} 
and with a deterministic bottom topography
\begin{equation}\label{eq:bt-wbperturb1}
B(x) = \left\{\begin{aligned}0.25(\cos(5\pi(x+0.35))+1),\quad &-0.55 < x < -0.15\\
0.125(\cos(10\pi(x-0.35))+1),\quad &0.25 < x < 0.45\\
0,\quad &\text{otherwise.}\end{aligned}\right.
\end{equation}
The test is from \cite{CKJin1} and is similar to the deterministic
tests of the perturbation of lake at rest solution, for example
to the one presented in \cite{fjordholm2011well}. From
presented results in \cref{fig:ex2-mean}, \cref{fig:ex2-m400} and
\cref{fig:ex2-m1600}, we make conclusions similar to previous sections: Both ES1 and ES2 capture small stochastic perturbations of
the lake at rest solution quite well (with both leftward- and rightward- going
waves present in the numerical solutions). The first order ES1 scheme exhibits
much more dissipation in the left and the right going waves than the ES2
scheme, which produces a more accurate solution, as shown in
\cref{fig:ex2-mean}, \cref{fig:ex2-m400} (left and middle figures),
\cref{fig:ex2-m1600}. The
results of EC scheme is also shown in \cref{fig:ex2-m400} (right
figure). The EC scheme resolves the left and the right going water waves with
heights higher than in both ES1 and ES2 methods, but again there are
oscillations present near both waves in EC numerical solution as
expected since EC does not dissipates energy across shocks. The
relative energy change for this example produced by EC, ES1 and ES2
methods is illustrated in \cref{fig:ex2-r_E}. The
presented results are also comparable to the results reported in
\cite{fjordholm2011well} and in \cite{CKJin1}.
\begin{figure}[htbp]
    \centering
    \includegraphics[width = .32\textwidth, trim={0 6cm 0 6cm}, clip]{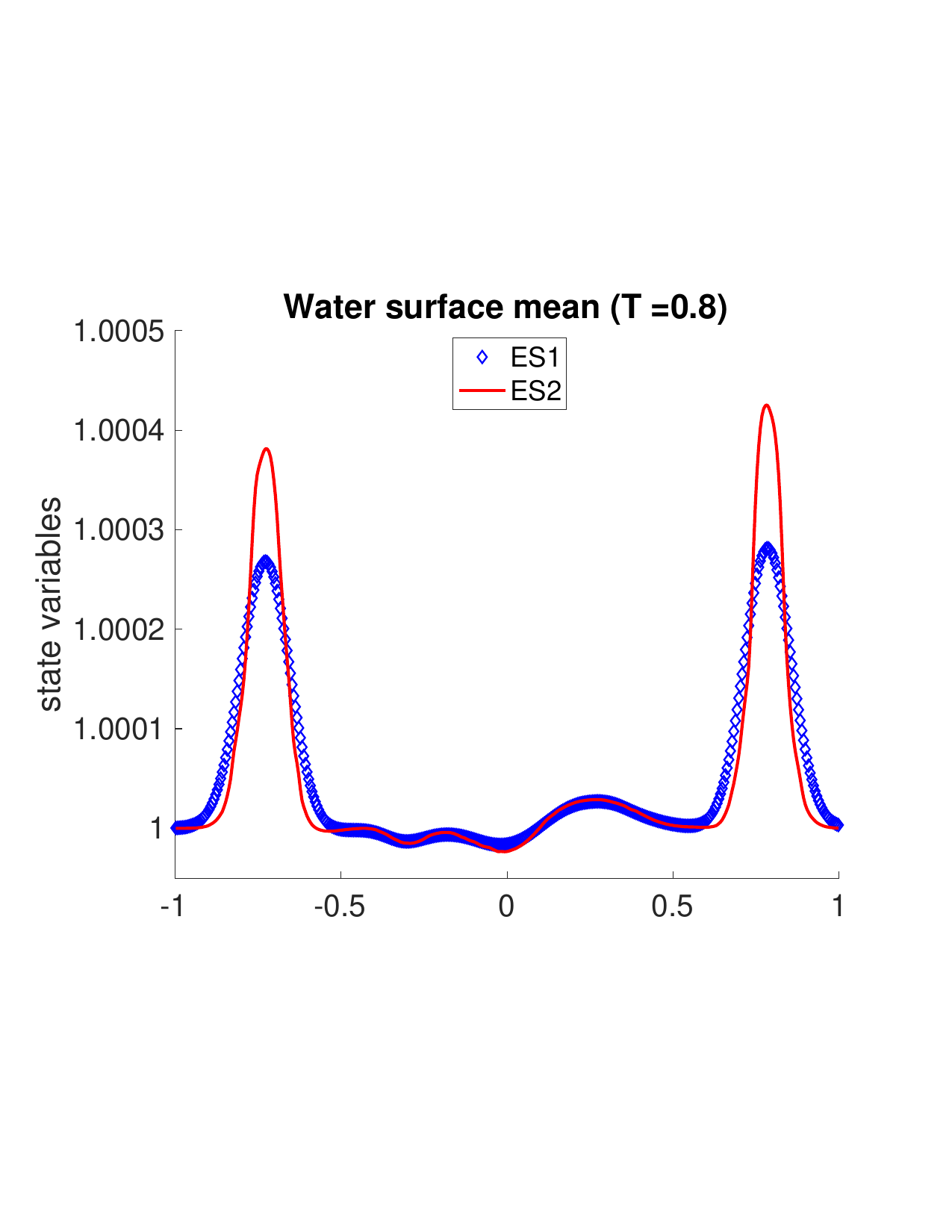}
    \includegraphics[width = .32\textwidth, trim={0 6cm 0 6cm}, clip]{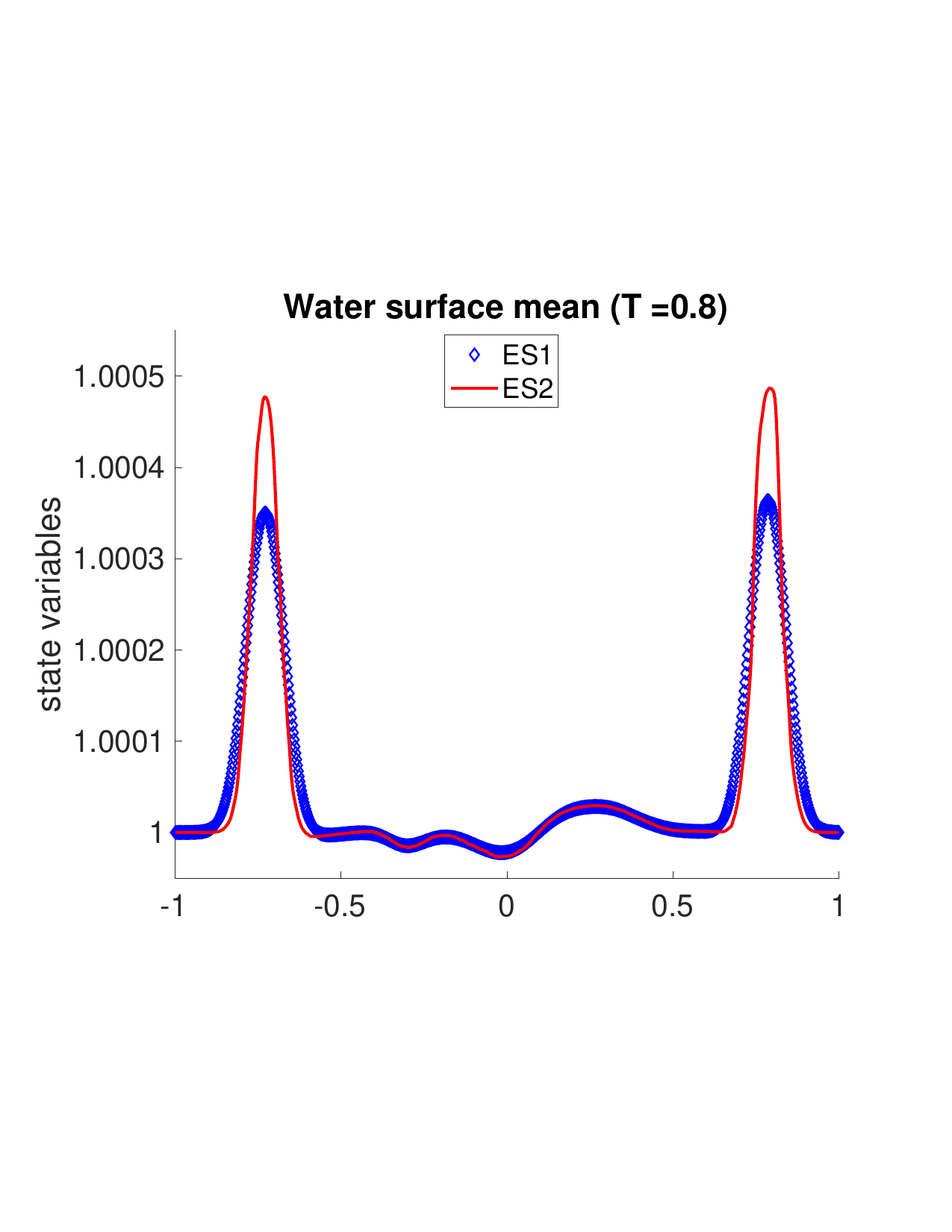}    
    \includegraphics[width = .32\textwidth, trim={0 6cm 0 6cm}, clip]{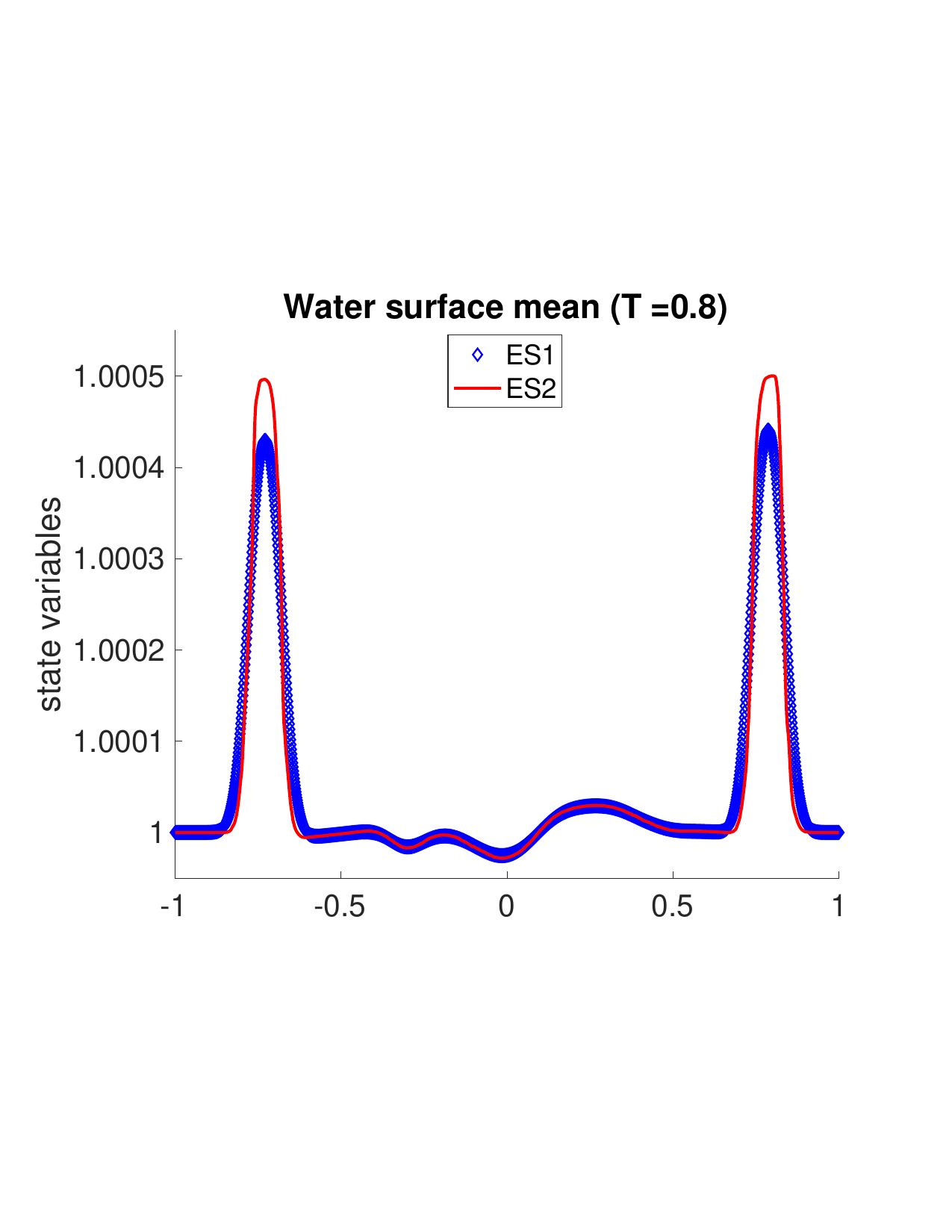}  
    \\
   \includegraphics[width = .32\textwidth, trim={0 6cm 0 6cm}, clip]{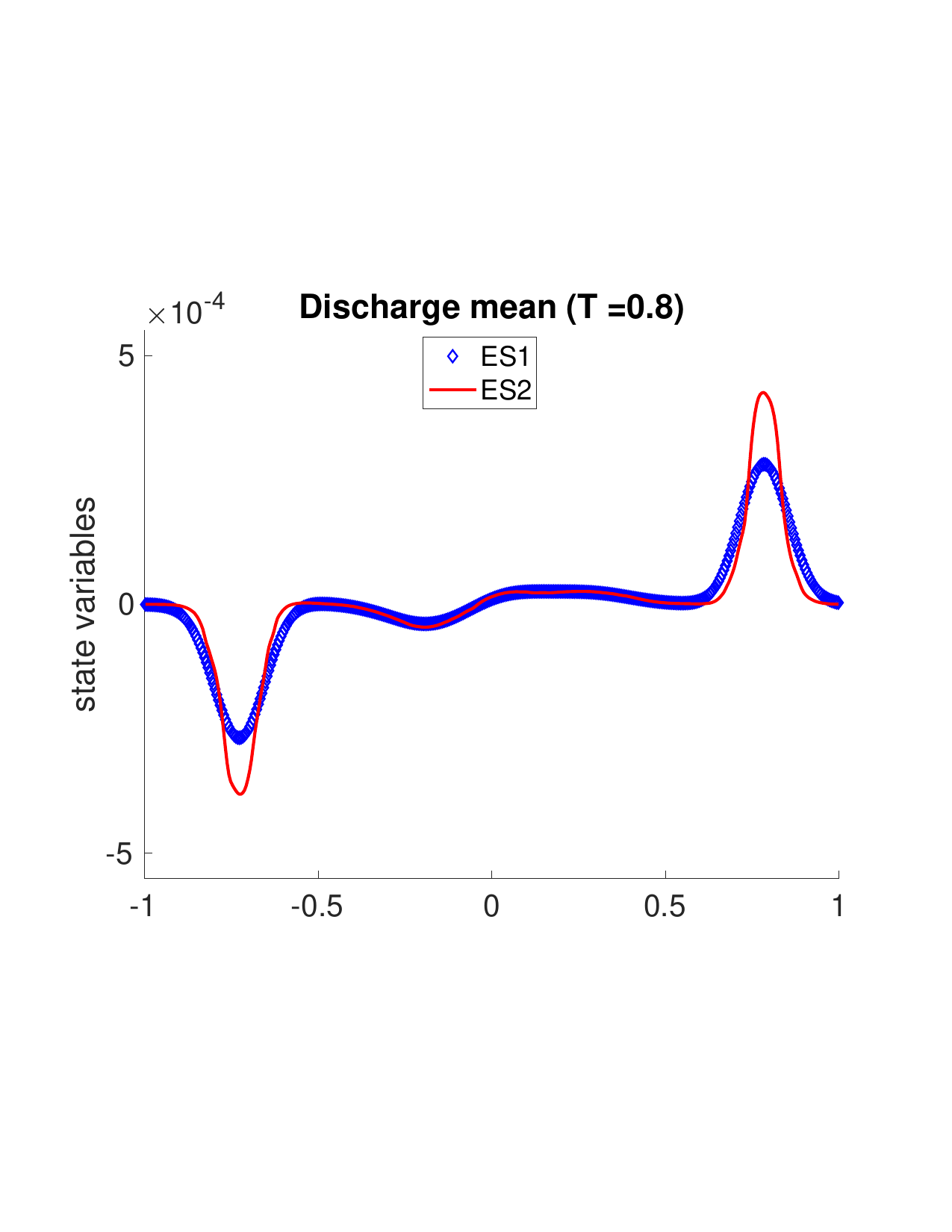}
    \includegraphics[width = .32\textwidth, trim={0 6cm 0 6cm}, clip]{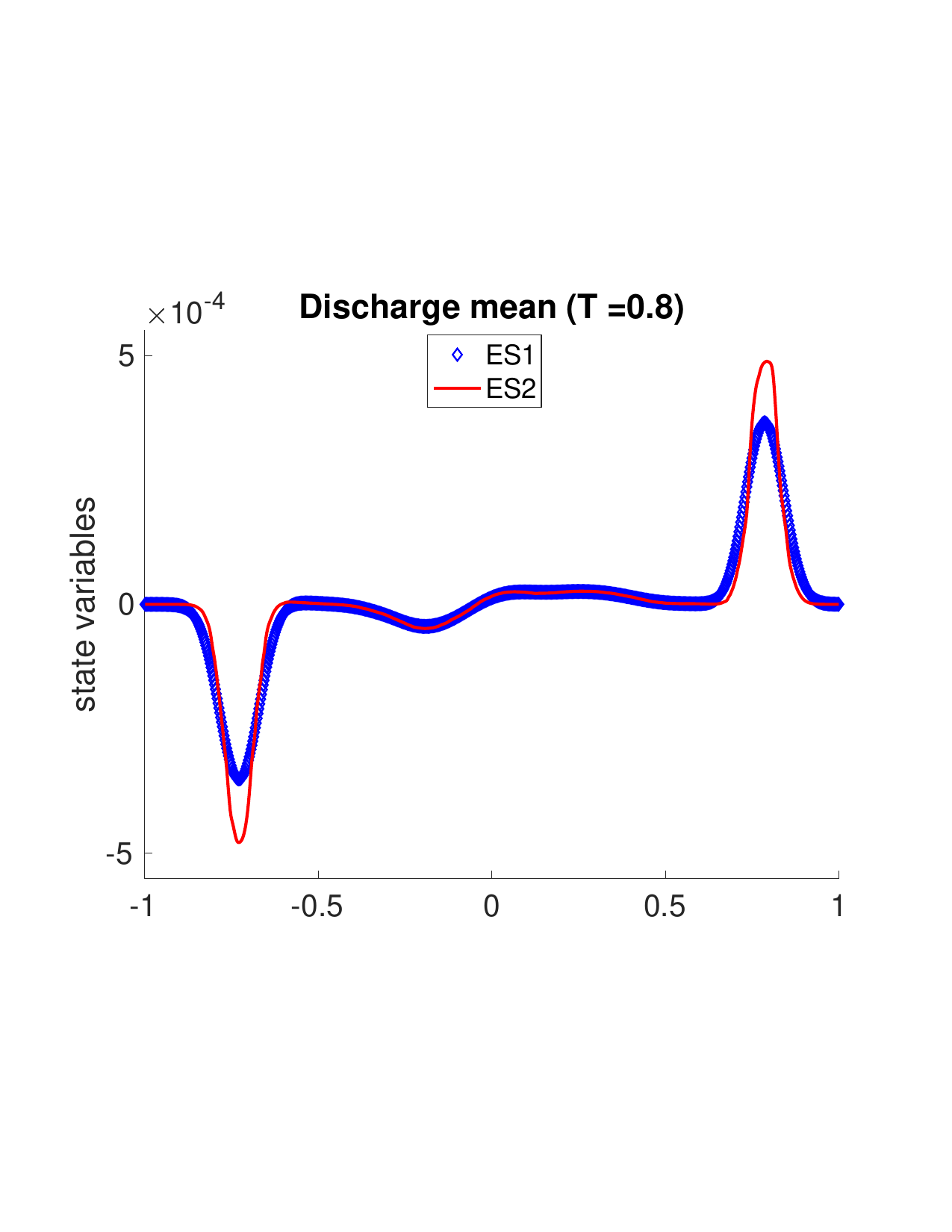}    
    \includegraphics[width = .32\textwidth, trim={0 6cm 0 6cm}, clip]{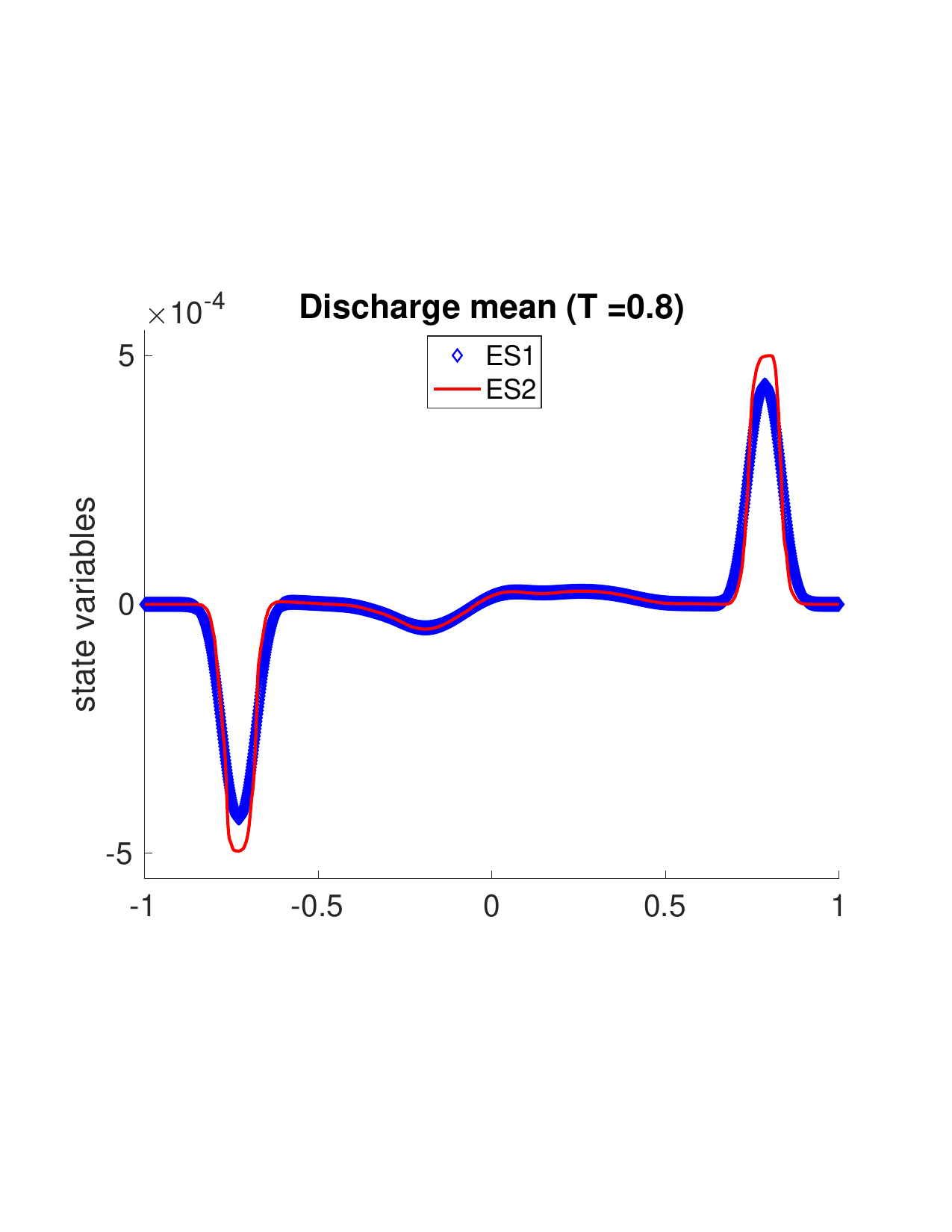}  
   % \includegraphics[width =
  %  .32\textwidth]{figure/1d-ex2-es1-N9-nx400-e.png}
   % \includegraphics[width = .32\textwidth]{figure/1d-ex2-N9-nx400-e-cu1.png}    
  %  \includegraphics[width = .32\textwidth]{figure/1d-ex2-N9-nx400-e-cu2.png}    
    \caption{Results for \cref{ssec:wbperturb2}. Top: water
      surface mean, Bottom: discharge mean: Left: ES1 vs. ES2 on mesh
      $nx=400$. Middle: ES1 vs. ES2 on mesh $nx=800$. Right: ES1 vs. ES2 on mesh $nx=1600$. PC basis functions K=9.}
    \label{fig:ex2-mean}
  \end{figure}

  \begin{figure}[htbp]
    \centering
    \includegraphics[width = .32\textwidth, trim={0 6cm 0 6cm}, clip]{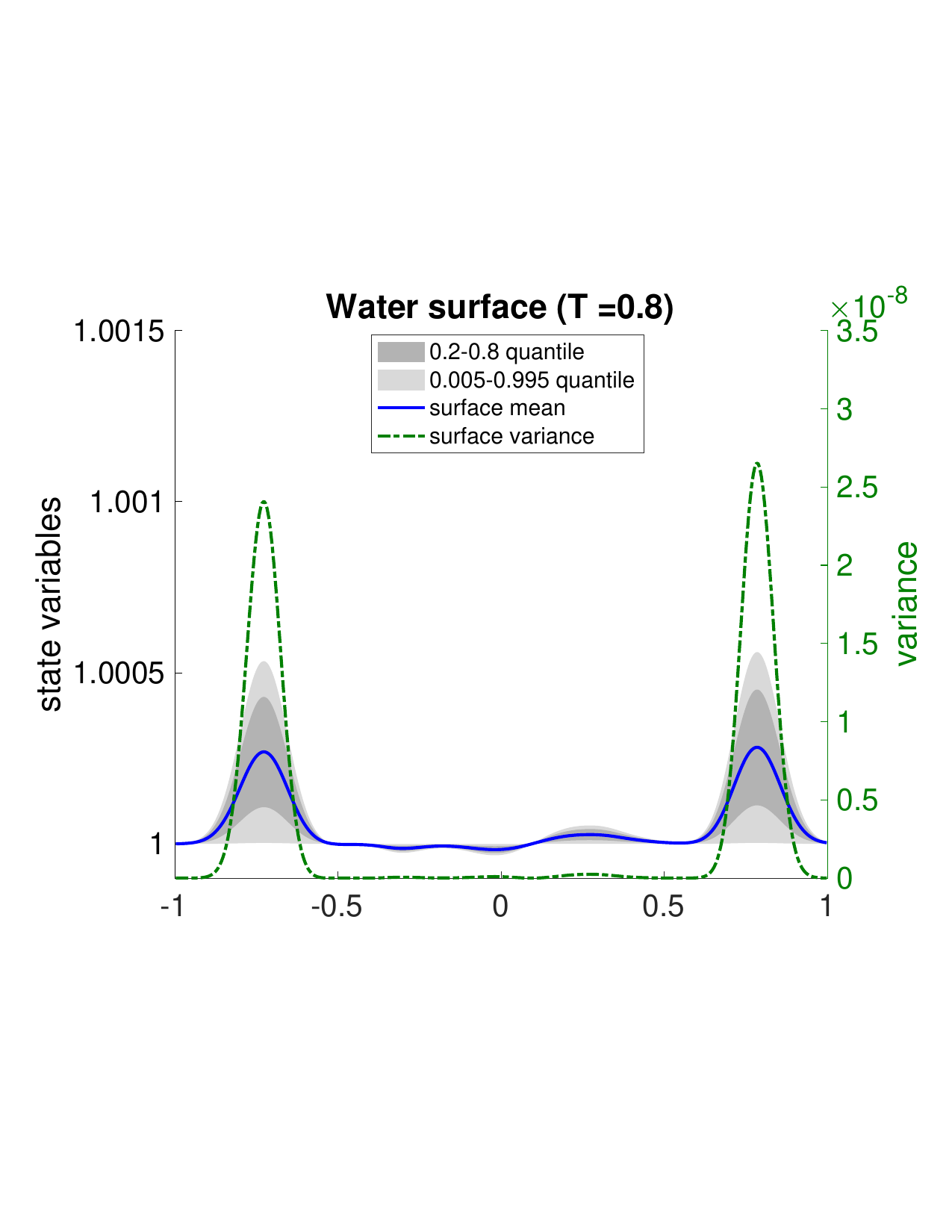}
    \includegraphics[width = .32\textwidth, trim={0 6cm 0 6cm}, clip]{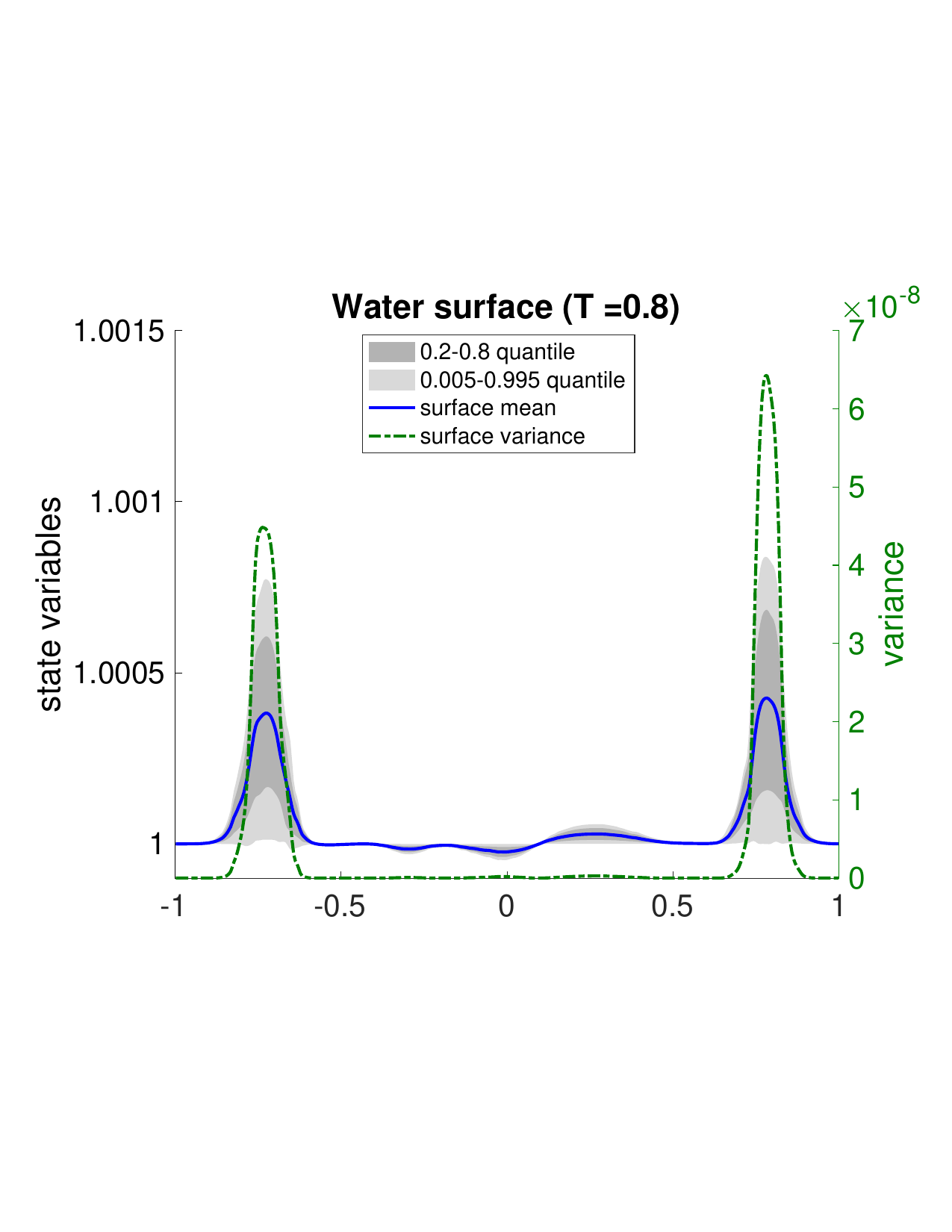}    
    \includegraphics[width = .32\textwidth, trim={0 6cm 0 6cm}, clip]{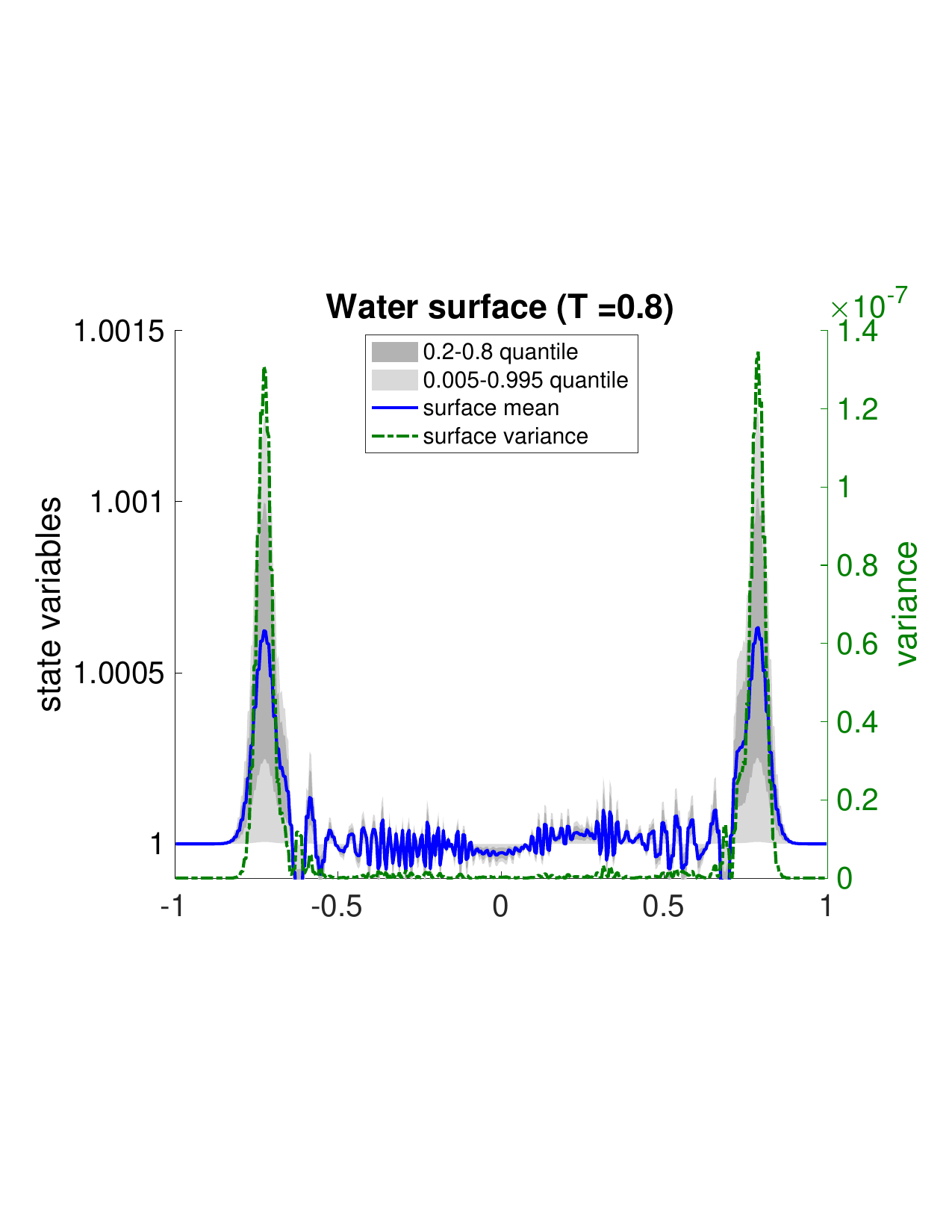}    
    \\
  \includegraphics[width = .32\textwidth, trim={0 6cm 0 6cm}, clip]{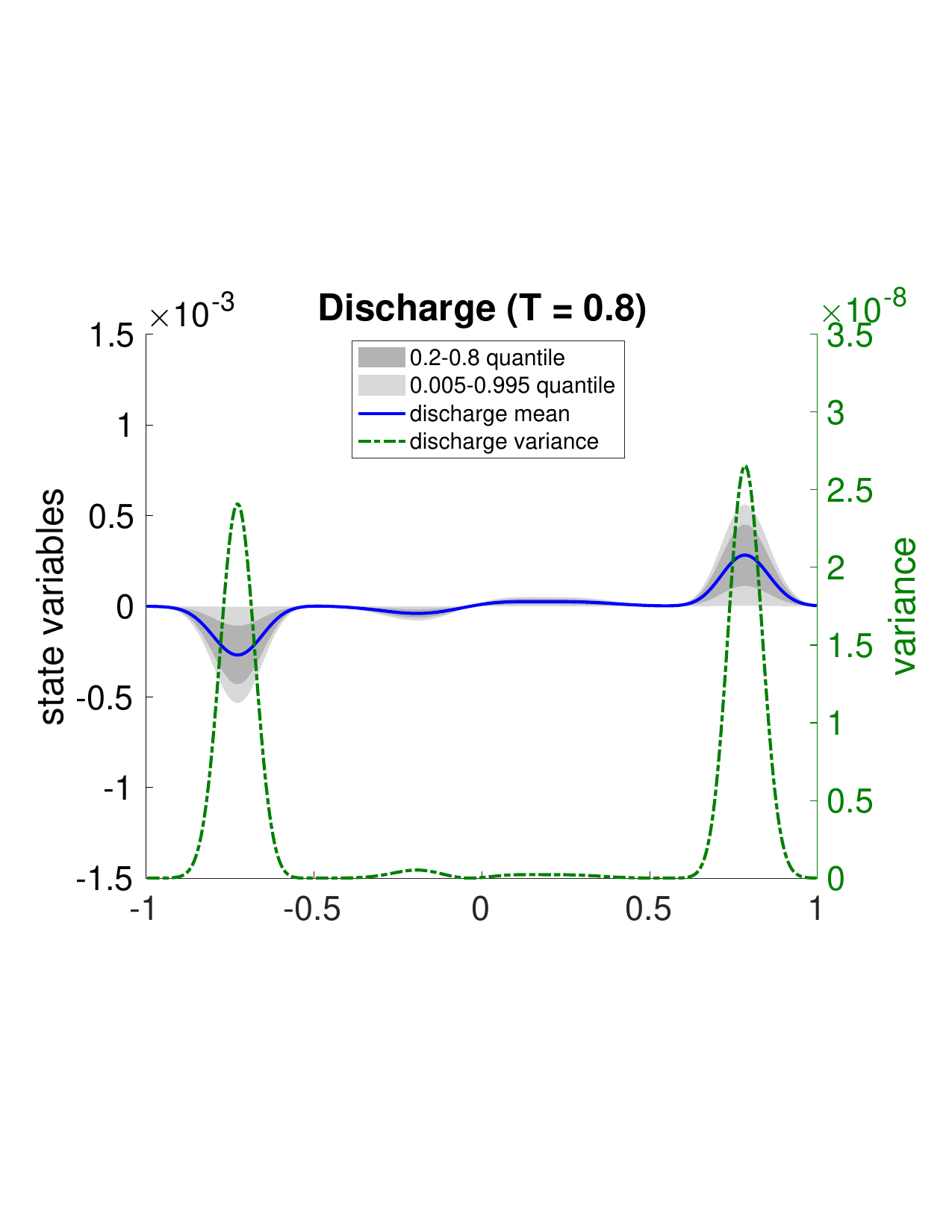}
    \includegraphics[width = .32\textwidth, trim={0 6cm 0 6cm}, clip]{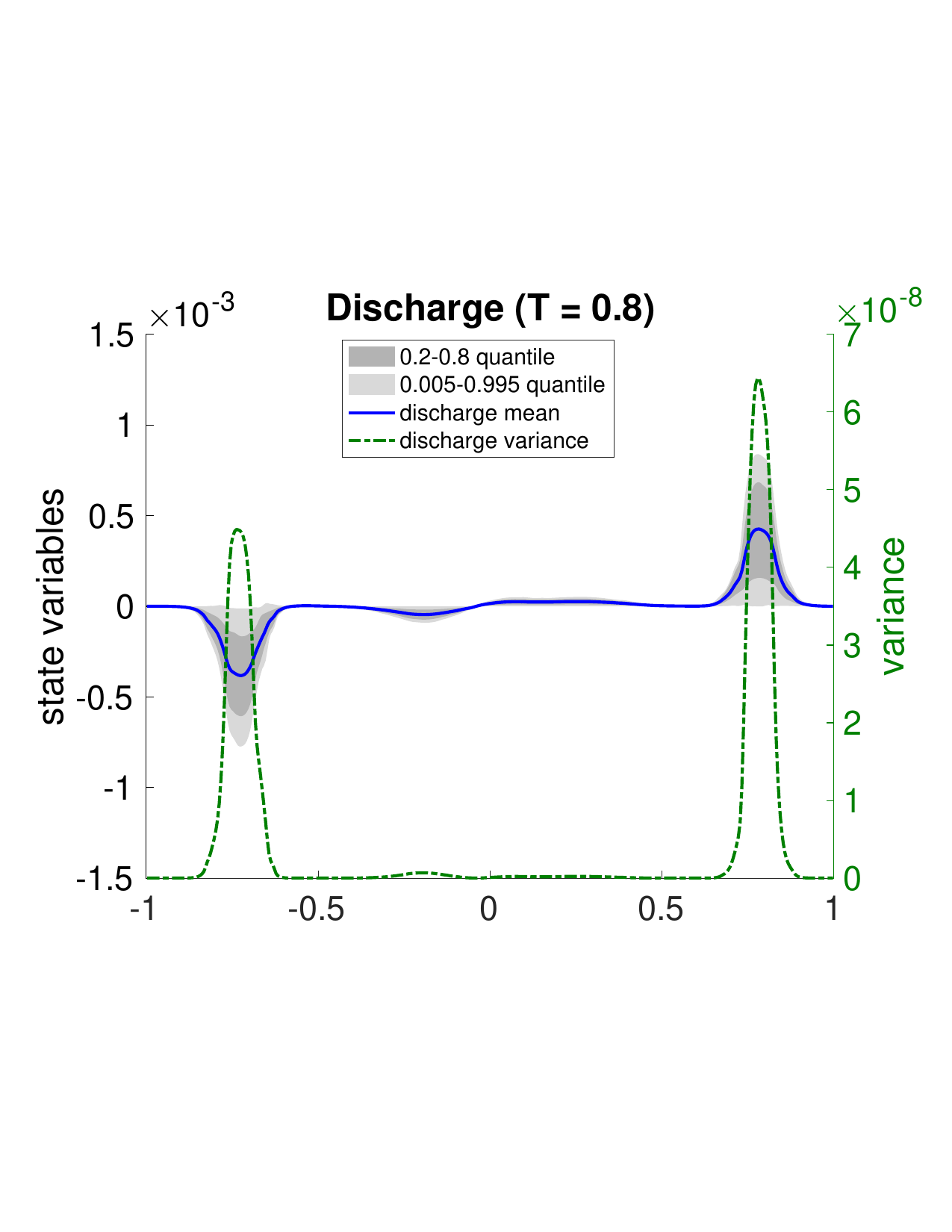}    
    \includegraphics[width = .32\textwidth, trim={0 6cm 0 6cm}, clip]{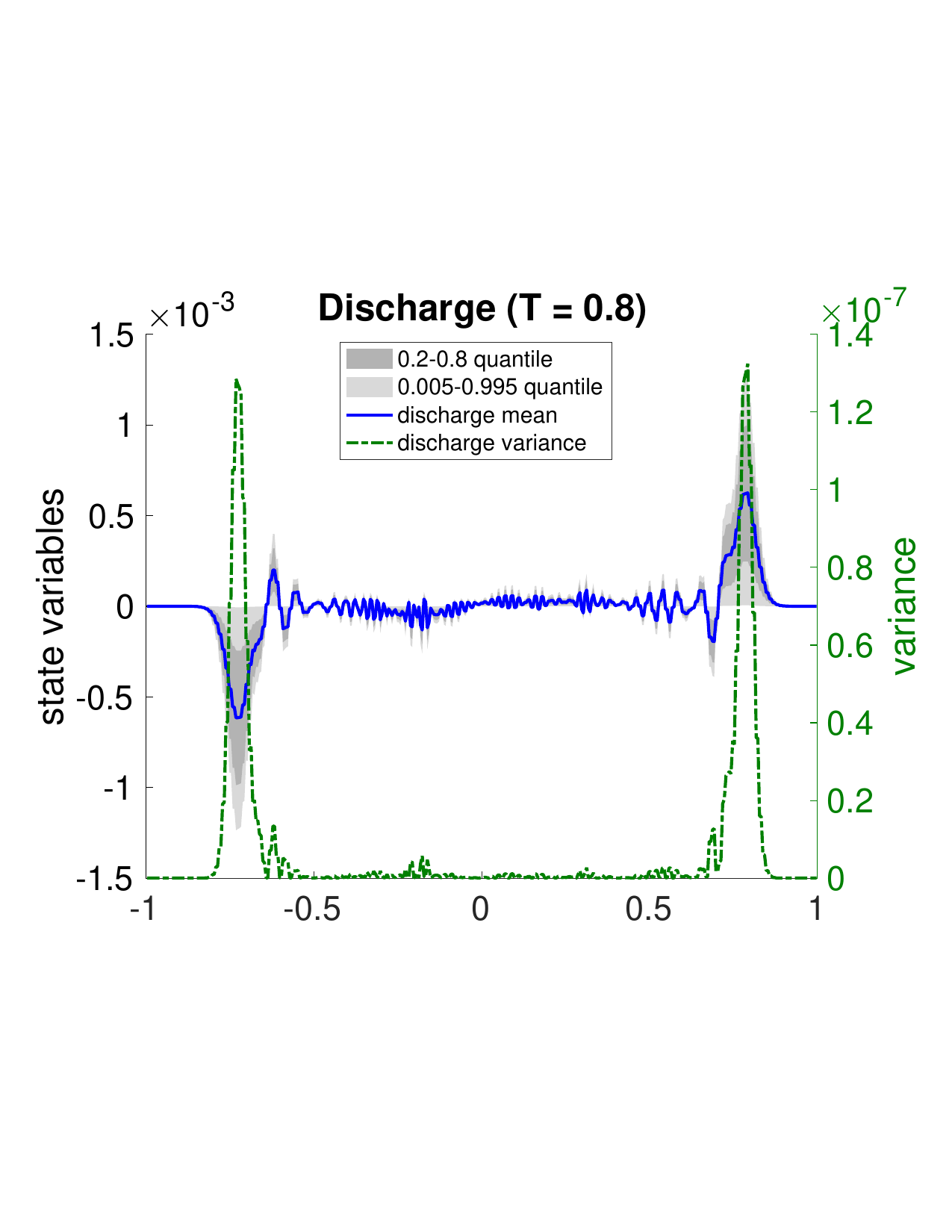}   
    \caption{Comparison of the results for \cref{ssec:wbperturb2} using
      different schemes. Top: water surface. Bottom: discharge. Left:
      ES1, Middle: ES2, Right: EC. Mesh $nx=400$ with $K=9$ at 
    time $T=0.8$.}
    \label{fig:ex2-m400}
  \end{figure}
\begin{figure}[htbp]
    \centering
    \includegraphics[width = .49\textwidth, trim={0 6cm 0 6cm}, clip]{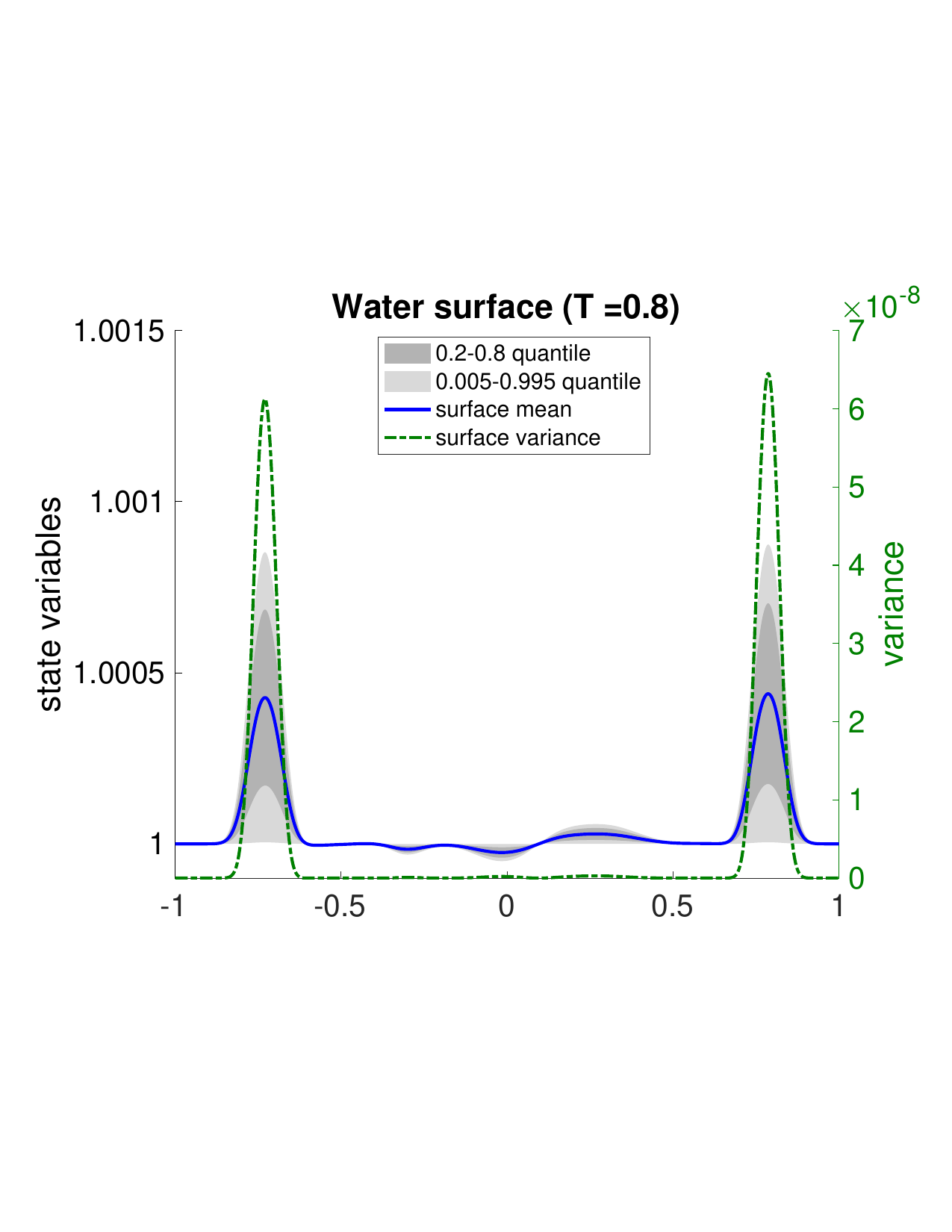}
    \includegraphics[width = .49\textwidth, trim={0 6cm 0 6cm}, clip]{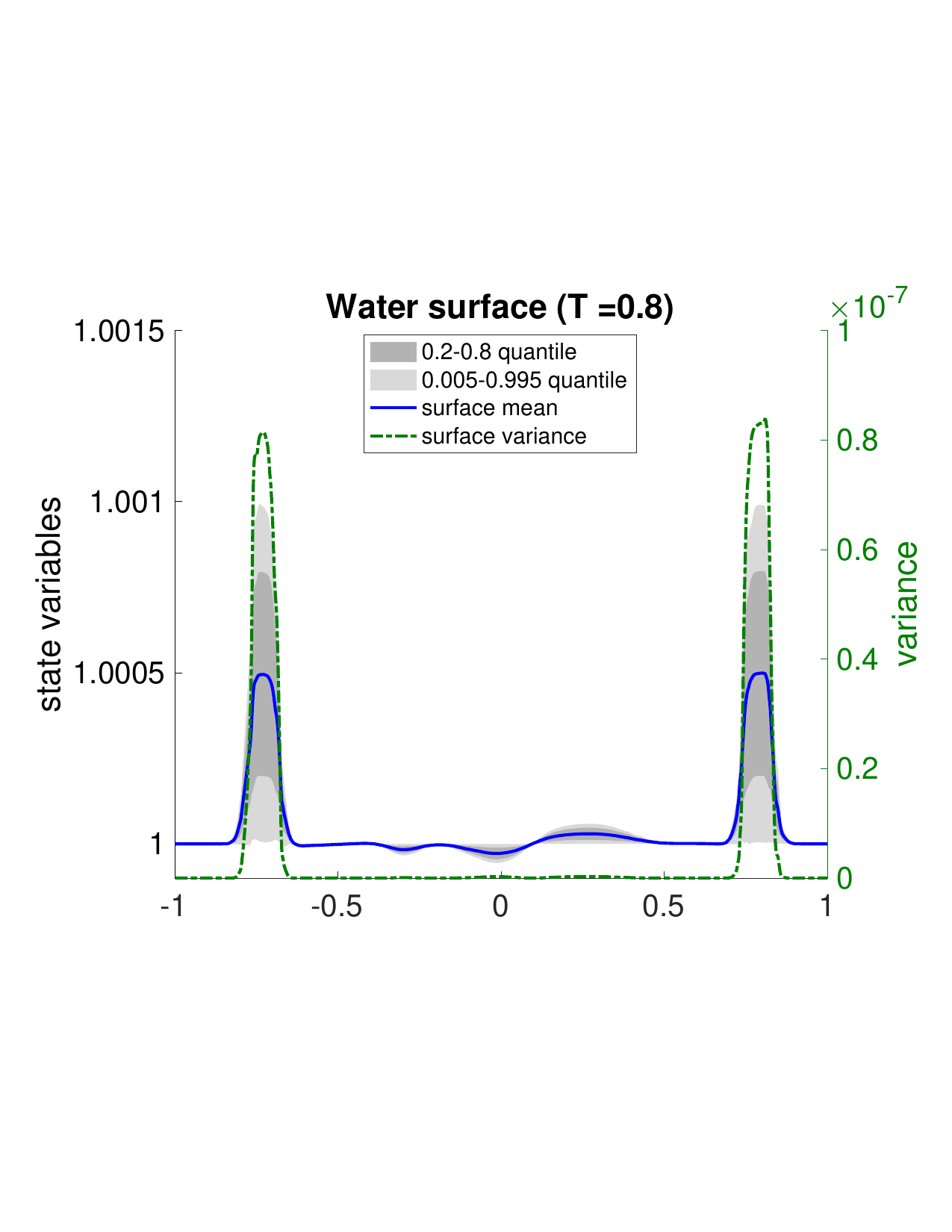}       
    \\
  \includegraphics[width = .49\textwidth, trim={0 6cm 0 6cm}, clip]{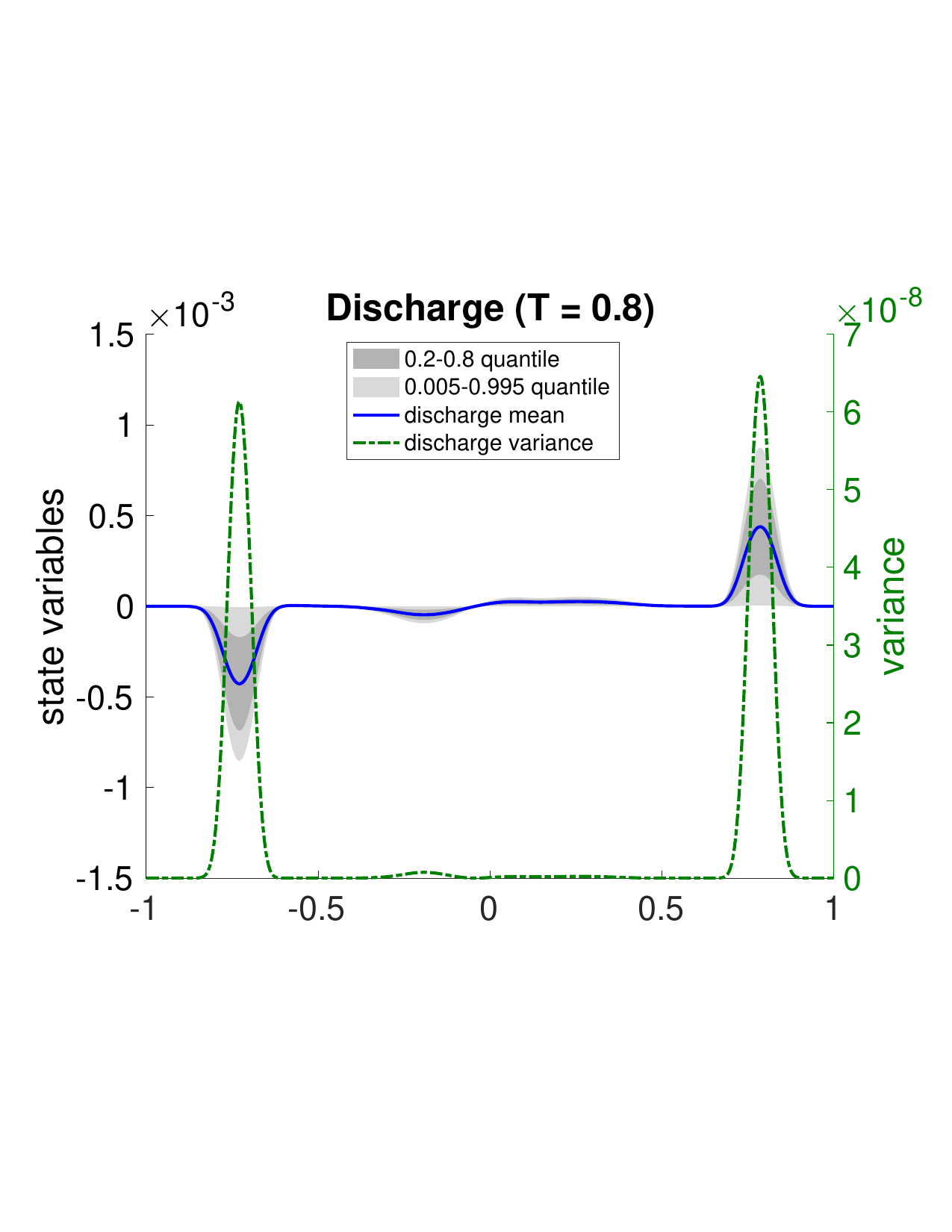}
    \includegraphics[width = .49\textwidth, trim={0 6cm 0 6cm}, clip]{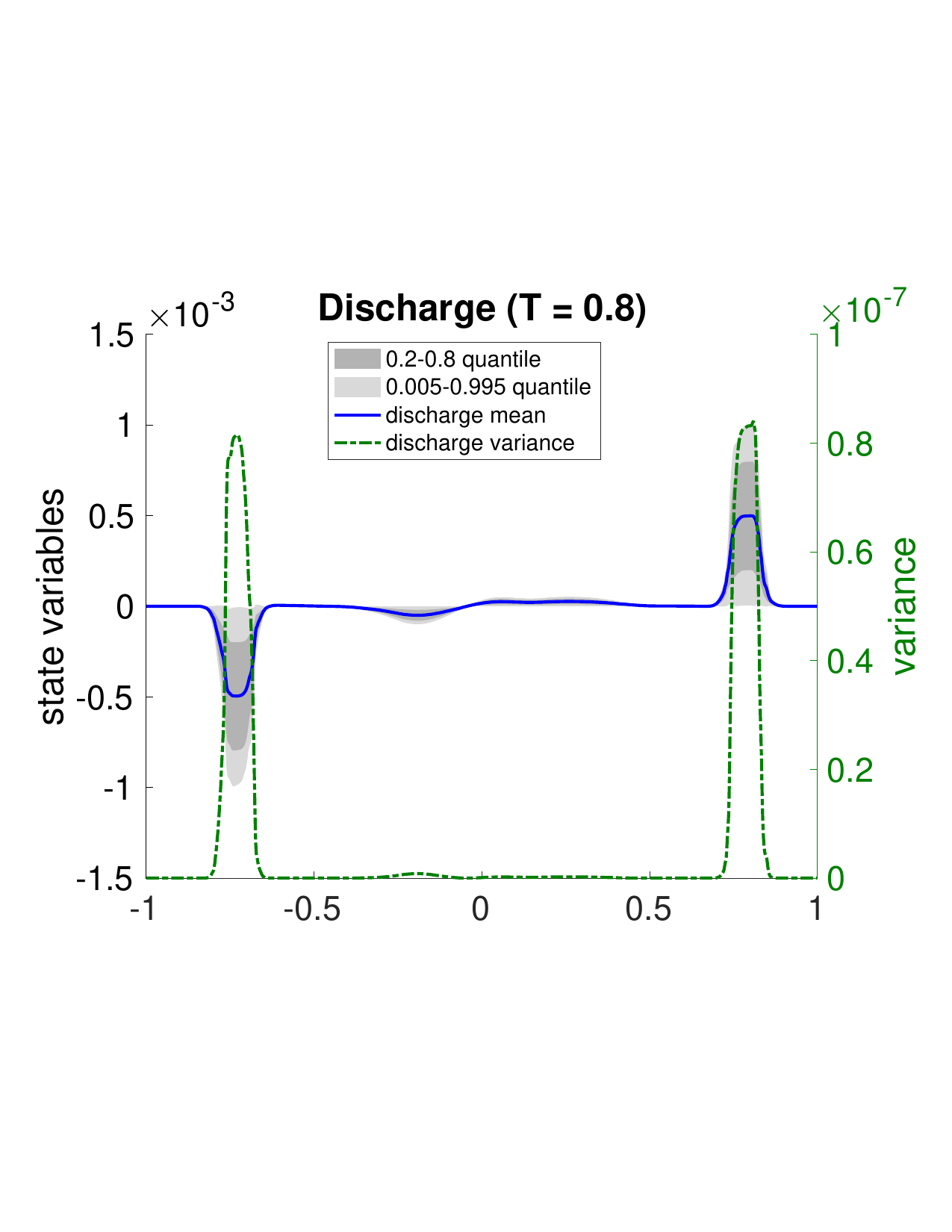}    
    \caption{Comparison of the results for \cref{ssec:wbperturb2} using
      different schemes. Top: water surface. Bottom: discharge. Left:
      ES1, Right: ES2. Mesh $nx=1600$ with $K=9$ at 
    time $T=0.8$.}
    \label{fig:ex2-m1600}
  \end{figure}
  
\begin{figure}[htbp]
    \centering
     \includegraphics[width = .49\textwidth, trim={0 6cm 0 6cm}, clip]{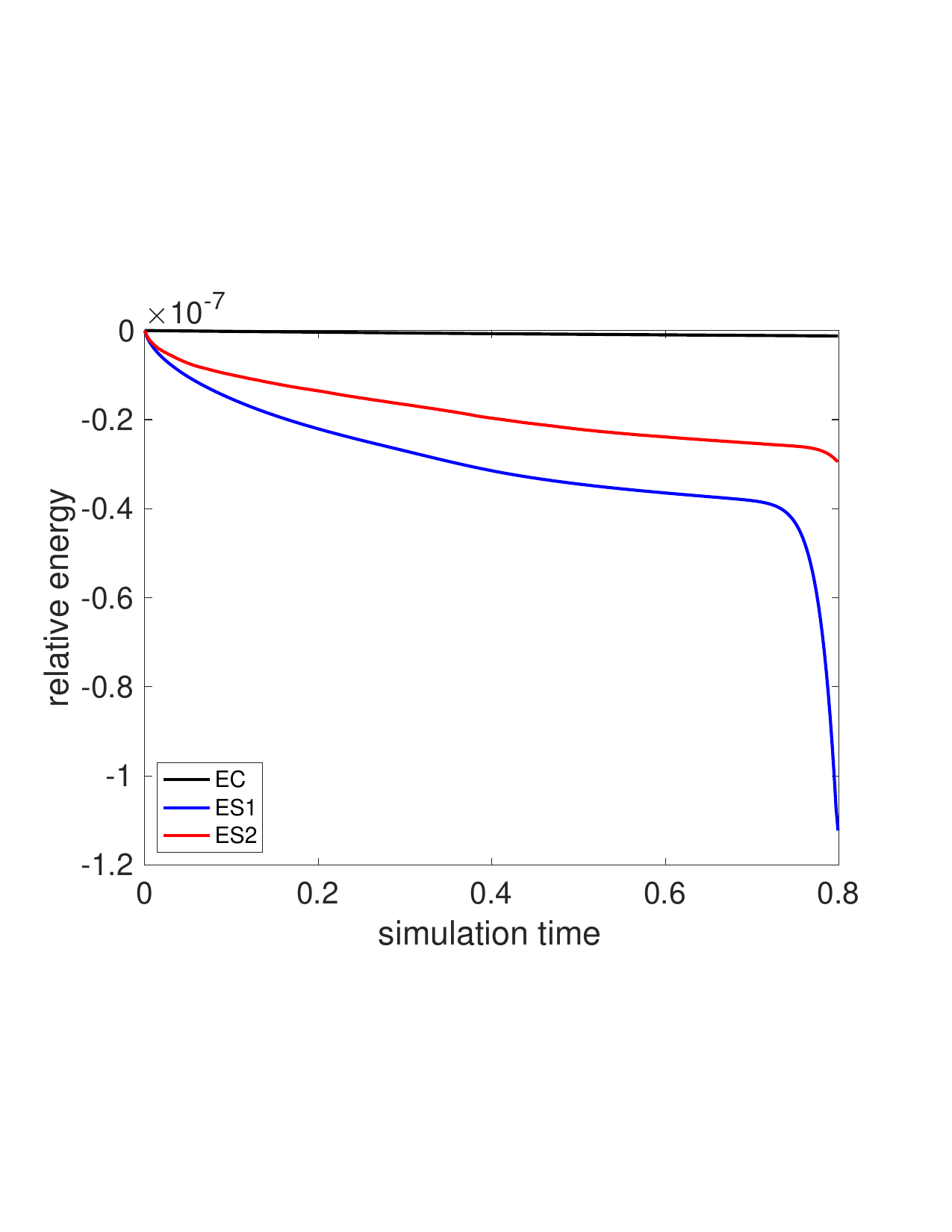}
     \includegraphics[width = .49\textwidth, trim={0 6cm 0 6cm}, clip]{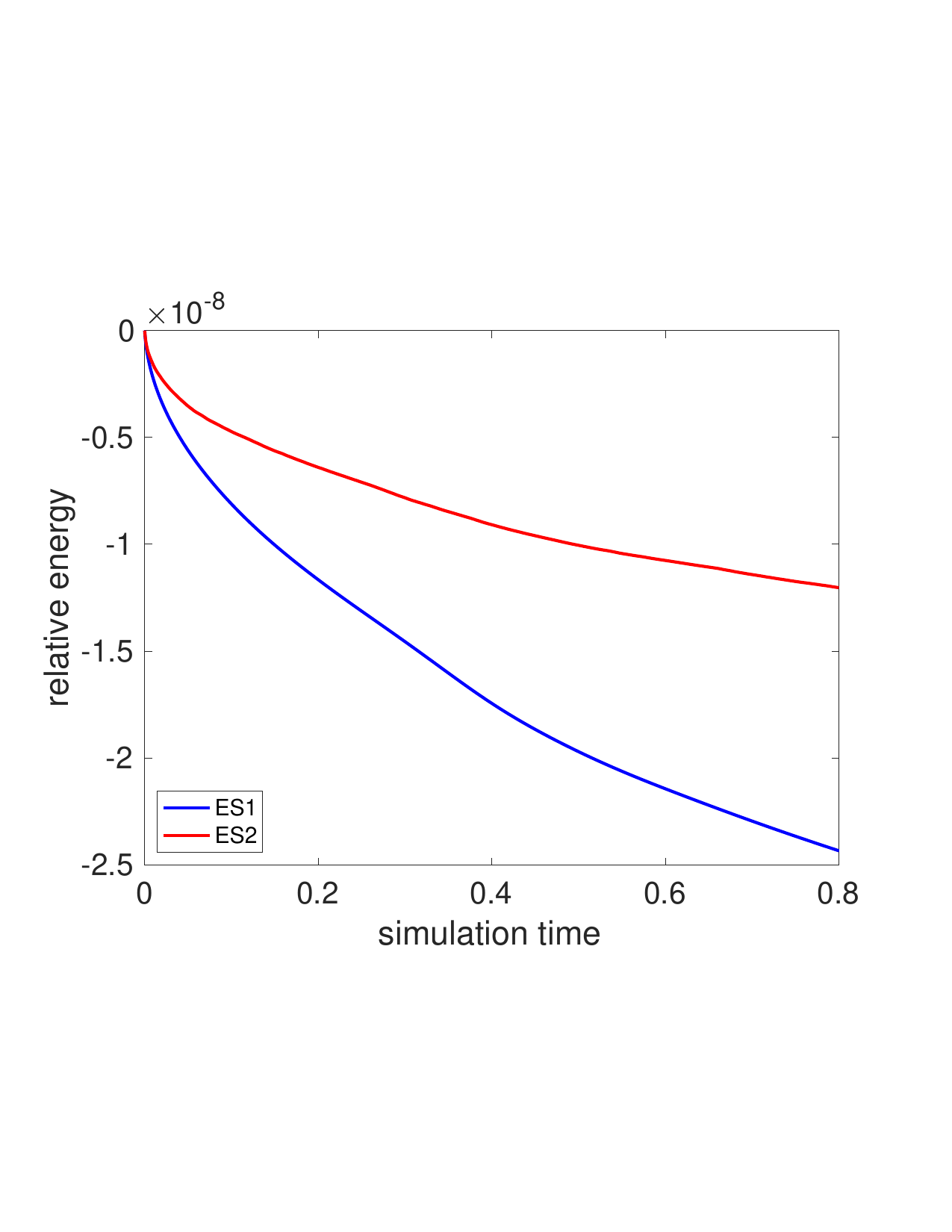}
    \caption{Comparison of the results for \cref{ssec:wbperturb2} using
      different schemes. Relative energy change: Left: EC vs. ES1
    vs. ES2 on mesh $nx=400$. Right: ES1 vs. ES2 on mesh $nx=1600$.}
    \label{fig:ex2-r_E}
  \end{figure}
  \section{Conclusion}\label{sec:conclusion}
 In this work we derived an entropy-entropy flux pair for the
 spatially one-dimensional hyperbolicity-preserving,
 positivity-preserving SG SWE system
 developed in \cite{doi:10.1137/20M1360736}. Such entropy-entropy flux
 pairs are the theoretical starting point for proposing entropy
 admissibility criteria to resolve non-uniqueness of weak solutions.
 Next, using the proposed entropy-entropy flux pair, we designed 
 second-order energy conservative, and first- and second-order energy
 stable finite volume schemes for the SG SWE. The proposed schemes are
 also well-balanced. We provided several numerical experiments to
 illustrate performance of the methods. As a part of future research,
 we plan to extend such methods to models in two spatial dimensions, to
 explore alternative constructions of the diffusion operators, and to investigate 
 other reconstruction approaches for the entropy variables.
 \section{Acknowledgement}
The work of Yekaterina Epshteyn and Akil Narayan was partially supported by NSF DMS-2207207. AN was partially supported by NSF DMS-1848508.
\bibliographystyle{amsplain} 
\bibliography{bibfile}

\end{document}